\let\svthefootnote\thefootnote
\newcommand\freefootnote[1]{%
  \let\thefootnote\relax%
  \footnotetext{#1}%
  \let\thefootnote\svthefootnote%
}
\newcommand{\RR}{\mathbb{R}}
\newcommand{\PP}{\mathbb{P}}
\newcommand{\cP}{\mathcal{P}}
\newcommand{\cN}{\mathcal{N}}
\newcommand{\cT}{\mathcal{T}}
\newcommand{\cL}{\mathcal{L}}
\newcommand{\cR}{\mathcal{R}}
\newcommand{\cI}{\mathcal{I}}
\newcommand{\cB}{\mathcal{B}}
\newcommand{\cF}{\mathcal{F}}
\newcommand{\cH}{\mathcal{H}}
\newcommand{\cS}{\mathcal{S}}
\DeclareSymbolFont{matha}{OML}{txmi}{m}{it}
\DeclareMathSymbol{\varv}{\mathord}{matha}{118}
\newcommand{\vertiii}[1]{{\left\vert\kern-0.25ex\left\vert\kern-0.25ex\left\vert #1
    \right\vert\kern-0.25ex\right\vert\kern-0.25ex\right\vert}}
\newcommand{\norm}[2]{ \left \lVert {#1}  \right \rVert_{#2}}
\newcommand{\module}[1]{\left \lvert {#1} \right \rvert}
\DeclareMathOperator{\Lip}{Lip}
\DeclareMathOperator{\diam}{diam}
\DeclareMathOperator{\argmin}{argmin}
\DeclareMathOperator{\diver}{div}
\newcommand{\setcomp}{\mathsf{c}}
\renewcommand{\eqref}[1]{\hyperref[{#1}]{(\ref*{#1})}}
\DeclareMathOperator{\co}{co}
\renewcommand{\epsilon}{\varepsilon}
\renewcommand{\eqref}[1]{\cref{#1}}
\theoremstyle{plain}
\newtheorem{thm}{Theorem}[section]
\newtheorem{cor}[thm]{Corollary}
\newtheorem{lem}[thm]{Lemma}
\newtheorem{prop}[thm]{Proposition}
\theoremstyle{definition}
\newtheorem{defi}[thm]{Definition}
\theoremstyle{remark}
\newtheorem{rem}[thm]{Remark}
\crefname{thm}{Theorem}{Theorems}
\crefname{cor}{Corollary}{Corollaries}
\crefname{lem}{Lemma}{Lemmata}
\crefname{prop}{Proposition}{Propositions}
\crefname{def}{Definition}{Definitions}
\crefname{rem}{Remark}{Remarks}
\crefname{ex}{Example}{Examples}
\title{Some error estimates for semidiscrete finite element approximations of stable solutions to mean field game systems}
\author{Jules Berry \thanks{Université Paris-Saclay, CNRS, CentraleSupélec, Laboratoire des signaux et systèmes, 91190, Gif-sur-
Yvette, France. Email : {\fontfamily{cmtt}\selectfont jules.berry@centralesupelec.fr}.}}
\date{\today}
\begin{document}
\maketitle
\justify
\freefootnote{The author would like to thank Olivier Ley and Francisco J. Silva for their remarks during the preparation of this work. This work was partially supported by the ANR (Agence Nationale de la Recherche) through the COSS project ANR-22-CE40-0010 and the Centre Henri Lebesgue ANR-11-LABX-0020-01.}
\begin{abstract}
We derive a priori error estimates for semidiscrete finite element approximations of stable solutions to time-dependent mean field game systems with Dirichlet boundary conditions. Expressing solutions to the MFG system as zeros of a nonlinear abstract mapping, we show that the stability of solutions is equivalent to the invertibility of its differential. This characterization allows us to apply the  Brezzi-Rappaz-Raviart approximation theorem in combination with discrete $L^p$ maximal regularity estimates to prove existence of solutions to the semidiscrete MFG system and to derive the error estimate. Finally, for solutions satisfying sufficient regularity assumptions, we establish quasi-optimal error bounds, meaning the approximation achieves the best possible convergence rate when the solution has sufficient smoothness.
\end{abstract}

{\small
\begin{adjustwidth}{0.8cm}{0.8cm}
\textbf{Mathematics subject classification.} 65M60, 35Q89, 35K55, 65M15.\\
\textbf{Keywords.} Mean field games, finite element method, semidiscrete approximation, a~priori error estimates, quasi-optimal convergence, stable solutions, Brezzi--Rappaz--Raviart theorem.
\end{adjustwidth}
}

\section{Introduction}
Mean field games (MFG for short) form a theoretical framework for analyzing Nash equilibria of dynamic games involving a very large number of identical interacting agents, where each agent has negligible individual influence on the system. It was introduced independently by Lasry and Lions \cite{LL2006,LL2006a,LL2007} and Huang, Caines and Malhamé \cite{HCM2006,HCM2007}. In the Lasry-Lions approach, the problem typically reduces to the analysis of a forward-backward system of coupled nonlinear partial differential equations, called the {mean field game system}, composed of an Hamilton-Jacobi-Bellman (HJB) equation for the value function of the optimal control problem considered by a representative player and a Kolmogorov-Fokker-Planck equation describing the evolution of the population. We refer to \cite{CP2020} for an introduction to the theory.\\

The numerical analysis of the mean field game system was initiated by the works of Achdou and Capuzzo-Dolcetta \cite{AC2010} and Achdou, Camilli and Capuzzo-Dolcetta \cite{ACC2013}, where the authors considered a finite difference scheme. We refer to \cite{A2013,AL2020} for surveys on this topic. In addition, semi-Lagrangian schemes were proposed by \cite{CS2015,CS2018} and finite element approximations were recently considered by Osborne and Smears \cite{OS2024a,OS2025}. In these works, convergence results are qualitative in nature. On the side of quantitative convergence results, Bonnans, Liu and Pfeiffer \cite{BLP2023} obtained error estimates for finite difference scheme, using a $\theta$-scheme for the time discretization and Osborne and Smears obtained error bounds for the finite element approximations of a stationary MFG system \cite{OS2025,OS2025a}. A posteriori error estimates for finite element approximations of stationary MFG systems were also studied by Osborne, Smears and Wells \cite{OSW2025}. These results relied on the so-called \emph{Lasry-Lions monotonicity condition}. In \cite{BLS2025,BLS2025a}, the authors proved error estimates on the finite element approximations of a stationary MFG system without the Lasry-Lions assumption by focusing on the class of \emph{stable solutions}. The notion of stable solutions was introduced by Briani and Cardaliaguet \cite{BC2018} and are characterized by the well-posedness of the linearized MFG system. \\

The main contribution of this paper is the derivation of a priori and quasi-optimal error estimates for semi-discrete in space $\mathbb{P}^1$ finite element approximations of stable solutions to the finite horizon MFG system with Dirichlet boundary conditions
\begin{equation}
\label{eq_mfg_parabolic_smooth}
 \begin{cases}
    - \partial_t u(t,x) - \Delta u(t,x) + H(x,Du(t,x)) = F[m](t,x) \quad & \textnormal{for } (t,x) \in (0,T) \times \Omega, \\
    \partial_t m(t,x) - \Delta m(t,x) - \diver \left ( m(t,x) H_p(x,Du(t,x)) \right )= 0 \quad & \textnormal{for } (t,x) \in (0,T) \times \Omega, \\
    u(t,x) = m(t,x) = 0 \quad & \textnormal{for }  (t,x) \in (0,T) \times \partial \Omega, \\
    u(T,\cdot) = u_T, \quad m(0,\cdot) = m_0 \quad & \textnormal{in } \Omega,
 \end{cases}
\end{equation}
where $\Omega \subset \RR^d$ is a bounded Lipschitz domain, $T> 0$ is a given constant, $H \colon \Omega \times \RR^d \to \RR$, $m_0, \, u_T \colon \Omega \to \RR$ and $F \colon L^2((0,T) \times \Omega) \to L^2((0,T) \times \Omega)$ are given functions and $H_p \colon \Omega \times \RR^d \to \RR^d$ denotes the gradient of $H$ with respect to the second variable.
We rely on the approach introduced in \cite{BLS2025} for the stationary MFG system, which uses the Brezzi-Rappaz-Raviart (BRR) approximation theorem \cite{BRR1980}. The main difference with \cite{BLS2025} lies in the fact that the parabolic nature of the systems makes the functional analysis required to apply the approach in \cite{BLS2025} much more involved. In particular, we had to make use of  the discrete $L^p$ maximal regularity estimates from \cite{L2019}. To the best of our knowledge, these are the first quantitative results about the convergence of finite element approximations for time-dependent MFG systems. Moreover, since our results concern stable solutions, we do not require the Lasry-Lions monotonicity conditions, allowing to consider mean field games having multiple solutions. We emphasize that our results remain valid in the monotone setting, since the Lasry-Lions condition typically implies the stability of the unique solution. \\

More precisely, we prove existence of solutions $(u_h,m_h)$ to the semi-discrete MFG system, assuming the mesh is quasi-uniform, satisfying the a priori error estimate
\[
 \norm{u - u_h}{L^q} + \norm{Du - Du_h}{L^q} + \norm{m-m_h}{L^q} \leq Ch^{2/q} \left( 1 + \module{\ln(h)}^{1-2/q} \right)
\]
for $q> 2(d+2)$. The logarithmic factor in the error estimates is a consequence of the use of $L^\infty$ estimates for the Ritz projection operator \cite{LV2016}. Moreover, if $(u,m)$ is sufficiently regular, we also prove the quasi-optimal error bound
\begin{multline*}
    \norm{u - u_h}{L^q} + \norm{Du - Du_h}{L^q} + \norm{m - m_h}{L^q} \\ \leq C \left[ \inf_{\substack{v_h \in L^q(0,T;V_h) \\ \rho_h \in L^q(0,T;V_h)}} \norm{u - v_h}{L^q} + \norm{Du - Dv_h}{L^q}+ h \left( \norm{m - \rho_h}{L^q} + \norm{Dm - D\rho_h}{L^q} \right) \right].
\end{multline*}
This demonstrates that the finite element approximation achieves the best possible convergence rate given the regularity of the exact solution. In particular, when the solution $(u,m)$ has sufficient regularity, this yields $O(h)$ convergence rates.\\

The main restrictions in this paper are, first, strong regularity assumptions on the Hamiltonian (assumption \ref{h_parabolic_mfg}), similar to those in \cite{BLS2025}. This is required to ensure that the nonlinear mapping considered in the BRR approximation theorem is differentiable. Second, we assume the \emph{$L^p$ shift property} for $p$ large enough \ref{h_parabolic_regularity_q}. This assumption is satisfied when $\Omega$ has $C^{1,1}$ boundary, but imposes strong geometric restrictions when the domain is nonsmooth. In particular, for polygonal or polyhedral domains, it requires that interior angles are not too large. This assumption allows us to work with function spaces having sufficient integrability, which is used in the proof of \cref{prop_parabolic_mfg_diff}. We believe that investigating to possibility of relaxing these assumptions is an interesting question for future research.\\

The paper is structured as follows. In Section~\ref{section:notations}, we introduce notation and state the main assumptions on the Hamiltonian, coupling operator, and domain geometry. Section~\ref{section:mfg_parabolic} is devoted to general properties of the continuous mean field game system and the characterization of stable solutions in terms of an isomorphism property. In Section~\ref{section:parabolic_FEM}, we prove existence and a priori error estimates for semidiscrete finite element approximations of stable solutions to the MFG system. Finally, in  \cref{section:nemytskii} we recall some properties of Nemytskii operators which are used in the paper and \cref{section:proofs} contains some postponed proofs.

\section{Notations and assumptions}
\label{section:notations}
\paragraph{Notations.} We set $Q_T := (0,T) \times \Omega$. For $p \in [1,\infty]$ and a Banach space $X$, we denote by $L^p(0,T;X)$ the \emph{Bochner space} of strongly measurable mappings $u \colon (0,T) \to X$ such that
\[
 \norm{u}{L^p(X)} := \left( \int_0^T \norm{u(t)}{X}^p \, dt \right)^{1/p} < + \infty
\]
if $p \in [1, \infty)$ and
\[
 \norm{u}{L^\infty(X)} := \inf \left \{ r \geq 0 :\, \module{\{t \in (0,T) : \norm{u(t)}{X} > r \}} = 0 \right \} < +\infty
\]
otherwise. We refer to \cite[Chapter 1]{HNVW2016} for further details on this topic.

The \emph{parabolic Hölder seminorm} on $Q_T$ is defined by
\[
 [u]_{\alpha/2, \alpha} := \sup_{\substack{(t,x),(s,y) \in Q_T \\ (t,x) \neq (s,y)}} \frac{\module{u(t,x) - u(s,y)}}{\module{t - s}^{\alpha/2} + \module{x - y}^{\alpha}}.
\]
The \emph{parabolic Hölder norm} is then defined by
\[
 \norm{u}{\alpha/2, \alpha} := \module{u}_{\infty} + [u]_{\alpha/2, \alpha},
\]
for $\alpha \in (0,1]$, and the corresponding \emph{parabolic Hölder space}\index{parabolic Hölder space} by
\[
 C^{\alpha/2, \alpha}(Q_T) := \left \{ u \colon Q_T \to \RR : \, \norm{u}{\alpha/2, \alpha} < \infty \right \}.
\]

The \emph{parabolic Sobolev spaces} $\mathcal{H}^1_p(Q_T)$, $W^{0,1}_p(Q_T)$ and $W^{1,2}_p(Q_T)$, for $p \in [1,\infty]$ are defined as the functions $u \in L^p(0,T;W^{1,p}_0(\Omega))$ having finite $\norm{\cdot}{\mathcal{H}^1_p}$, $\norm{\cdot}{W^{0,1}_p}$ and $\norm{\cdot}{W^{1,2}_p}$ norms, respectively, where
\begin{align*}
 \norm{u}{\mathcal{H}^1_p} &:= \norm{u}{L^p} + \norm{Du}{L^p} + \norm{\partial_t u}{L^p(W^{-1,p})}, \\
 \norm{u}{W^{0,1}_p} & := \norm{u}{L^p} + \norm{Du}{L^p}, \\
 \norm{u}{W^{1,2}_p} & := \norm{u}{W^{0,1}_p} + \norm{D^2 u}{L^p} + \norm{\partial_t u}{L^p},
\end{align*}
and we recall that $W^{1,p}_0(\Omega)$ denotes the closure of $C^\infty_c(\Omega)$ with respect to the norm of the Sobolev space $W^{1,p}(\Omega)$.
\label{eq:parabolic_spaces}

\paragraph{Assumptions.}
Throughout this paper, we assume that the domain $\Omega$ satisfies both the \emph{uniform interior and exterior cone conditions}\index{cone condition}, \textit{i.e.}, there exists $r, \kappa > 0$ such that for every $x \in \partial \Omega$, there exists unit vectors $\xi_e = \xi_e(x)$ and $\xi_i = \xi_i(x)$ such that
\begin{equation*}
 \left\{ y \in B(x,r) :\, \xi_e \cdot \left(y - x \right) > 0 \textnormal{ and } \xi_e \cdot \left(y - x\right) < \kappa \module{y - x} \right \} \subset \Omega^\setcomp
\end{equation*}
and
\begin{equation*}
 \left\{ y \in B(x,r) :\, \xi_i \cdot \left(y - x \right) > 0 \textnormal{ and } \xi_i \cdot \left(y - x\right) < \kappa \module{y - x} \right \} \subset \Omega.
\end{equation*}
We now list the other assumptions that will be used below.
\begin{enumerate}[label={{\rm\bf(H\arabic*)}}]
 \item \label{h_parabolic_mfg} We assume the following.
 \begin{itemize}
  \item The Hamiltonian $H$ is of class $C^2$ with respect to the second variable and the functions $H$, $H_p$ and $H_{pp}$ are jointly continuous, where $H_p$ and $H_{pp}$ denote the gradient and the Hessian of $H$ with respect to the second variable, respectively. We also assume that there exists $C_H > 0$ such that
 \begin{gather}
  \module{H(x,p)} \leq C_H \left(1 + \module{p}^2 \right), \\
  \module{H_p(x,p)} \leq C_H \left(1 + \module{p} \right) , \label{eq_mfg_parabolic_smooth_growth} \\
  \module{H_{pp}(x,p)} \leq C_H,
 \end{gather}
 for every $(x,p) \in \Omega \times \RR^d$.

 \item For every $p \in [2, \infty)$, there exists $L_F > 0$ such that
    \begin{align*}
        \norm{F[m_1] - F[m_2]}{L^p} \leq L_F \norm{m_1 - m_2}{L^p} \quad \textnormal{for every } m_1, \, m_2 \in L^p(Q_T).
    \end{align*}

 \item The initial distribution $m_0$ and the terminal cost $u_T$ belong to $C^{\alpha}(\Omega) \cap \cP(\Omega)$, for some $\alpha \in (0,1)$, and $W^{2,\infty}(\Omega)$, respectively and vanish on $\partial \Omega$.
 \end{itemize}

 \item \label{h_parabolic_regularity_q} The domain $\Omega$ is such that there exists $q > 2(d+2)$ and $C_p > 0$ such that
 \[
  \norm{u}{W^{2,p}} \leq C_p \left( \norm{\Delta u}{L^{p}} + \norm{u}{L^{p}} \right) \quad \textnormal{for all } u \in W^{2,{p}}(\Omega) \cap W^{1,{p}}_0(\Omega)
 \]
 for every $1 < p \leq q/2$.

 \item \label{h_parabolic_mfg_diff} We assume that $F \colon L^p(Q_T) \to L^r(Q_T)$ is continuously differentiable for every $1 < r < p < \infty$.
\end{enumerate}

\begin{rem}\label{rem_mfg_parabolic_smooth_elliptic_reg}
Assumption \ref{h_parabolic_regularity_q} imposes strong restrictions on the geometry of the domain. It is known to hold when $\Omega$ has $C^{1,1}$ boundary \cite[Theorems  2.2.2.5 and 2.3.1.5]{G1985}. In the case of nonsmooth domains, the assumption does not hold in general if the domain is not convex. If $d = 2,\, 3$ and $\Omega$ has polygonal or polyhedral boundary, the assumptions holds if interior angles are not too large \cite{D1988,D1992}. In particular, it is known that the assumption holds in box-shaped domains \cite{F1987}.
\end{rem}

\begin{rem}
 Assumption \ref{h_parabolic_mfg_diff} holds for instance if $F[m](t,x) = f(t,x,m(t,x))$ where $f \colon Q_T \times \RR \to \RR$ is a Carathéodory function which is $C^1$ with respect to the last variable and satisfies
 \begin{align*}
  \module{f(t,x,m)} & \leq C \left( 1 + \module{m} \right)\\
  \module{\partial_m f(t,x,m)} & \leq C
 \end{align*}
 for every $(t,x,m) \in Q_T \times \RR$ for some $C > 0$. Similarly, the assumption also holds if
 \[
  F[m](t,x) = f(t,x,k * m(t,x))
 \]
 where $k$ is a smooth convolution kernel and $f$ is as above.
\end{rem}

\section{General properties and stable solutions}
\label{section:mfg_parabolic}
This section is dedicated to the analysis of the MFG system \eqref{eq_mfg_parabolic_smooth}. We first study the well-posedness of the MFG system and we then turn to the characterization of its stable solutions in terms of an isomorphism property on the differential of some well-chosen mapping.
\subsection{Preliminary results on parabolic equations}
Let us start by recalling some results regarding the regularity of weak solutions to parabolic equations. The first one deals with the well-posedness of parabolic equations with first order terms in divergence form.
\begin{prop}[{\cite[Theorem 4.1 p.153]{LSU1968}}]\label{prop_parabolic_FP_wellposed}
 Let $b \in L^{d+2}(Q_T)$, $f,\, g \in L^2(Q_T)$, and $\rho_0 \in L^2(\Omega)$. Then there exists a unique weak solution $\rho \in \cH^1_2(Q_T)$ to
 \begin{equation}\label{eq_mfg_parabolic_smooth_FP_simple}
  \begin{cases}
   \partial_t \rho - \Delta \rho + \diver \left ( \rho b \right) = f + \diver(g) \quad & \textnormal{in } Q_T, \\
   \rho = 0 \quad & \textnormal{on } (0,T) \times \partial \Omega, \\
   \rho(0,\cdot) = \rho_0 \quad & \textnormal{in } \Omega,
  \end{cases}
 \end{equation}
 and there exists $C = C(\norm{b}{L^{d+2}}, T, d)$ such that
 \[
  \norm{\rho}{\cH^1_2} \leq C \left( \norm{g}{L^2} + \norm{f}{L^2} + \norm{\rho_0}{L^2} \right).
 \]
\end{prop}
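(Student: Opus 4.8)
The plan is to establish existence by a Galerkin approximation together with a priori energy estimates, and to deduce uniqueness from the same estimate using linearity. I would first recast \eqref{eq_mfg_parabolic_smooth_FP_simple} in weak form: find $\rho \in \mathcal{H}^1_2(Q_T)$ such that for a.e.\ $t \in (0,T)$ and every $\varphi \in W^{1,2}_0(\Omega)$,
\[
\langle \partial_t \rho, \varphi\rangle + \int_\Omega D\rho \cdot D\varphi \, dx - \int_\Omega \rho\, b \cdot D\varphi\, dx = \int_\Omega f\varphi\,dx - \int_\Omega g\cdot D\varphi\,dx,
\]
with $\rho(0) = \rho_0$ in $L^2(\Omega)$, which is meaningful because $\mathcal{H}^1_2(Q_T) \hookrightarrow C([0,T];L^2(\Omega))$. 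To produce approximate solutions I would fix the orthonormal basis of $L^2(\Omega)$ given by the eigenfunctions of the Dirichlet Laplacian, project onto the span of the first $n$ of them, and solve the resulting linear system of ordinary differential equations for the Galerkin approximations $\rho_n$.

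The core of the argument is a uniform a priori bound, obtained by testing with $\rho_n$ itself, yielding the energy identity
\[
\frac{1}{2}\frac{d}{dt}\norm{\rho_n}{L^2}^2 + \norm{D\rho_n}{L^2}^2 = \int_\Omega \rho_n\, b \cdot D\rho_n\,dx + \int_\Omega f\rho_n\,dx - \int_\Omega g\cdot D\rho_n\,dx.
\]
The contributions of $f$ and $g$ are routine: Cauchy--Schwarz and Young's inequality absorb them into $\norm{D\rho_n}{L^2}^2$ and $\norm{\rho_n}{L^2}^2$ at the cost of the data norms.

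The main obstacle is the drift term $\int_\Omega \rho_n\, b \cdot D\rho_n$, since $b$ lies in the scaling-critical space $L^{d+2}(Q_T)$ and cannot be handled by a crude bound. I would exploit the parabolic (Ladyzhenskaya) embedding
\[
L^\infty(0,T;L^2(\Omega)) \cap L^2(0,T;W^{1,2}_0(\Omega)) \hookrightarrow L^{2(d+2)/d}(Q_T),
\]
noting that the Hölder identity $\tfrac{d}{2(d+2)} + \tfrac{1}{d+2} + \tfrac12 = 1$ shows that $d+2$ is exactly the integrability of $b$ needed to pair $\rho_n \in L^{2(d+2)/d}$ with $D\rho_n \in L^2$. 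Because this pairing consumes the full energy norm, it cannot be absorbed outright; I would therefore split $b = b_1 + b_2$ with $b_1$ small in $L^{d+2}(Q_T)$ (by density of bounded functions) and $b_2 \in L^\infty(Q_T)$. Writing $Q_\tau := (0,\tau)\times\Omega$, the small critical part is absorbed, after Young's inequality, into $\sup_{[0,\tau]}\norm{\rho_n}{L^2}^2 + \norm{D\rho_n}{L^2(Q_\tau)}^2$, while the bounded part yields a term controlled by $\int_0^\tau \norm{\rho_n}{L^2}^2$. Absorbing the critical contribution and applying Grönwall's lemma to $\tau \mapsto \sup_{[0,\tau]}\norm{\rho_n}{L^2}^2$ then gives a bound on $\sup_{[0,T]}\norm{\rho_n}{L^2}^2 + \norm{D\rho_n}{L^2(Q_T)}^2$ by the data, uniform in $n$, with constant of the asserted form $C(\norm{b}{L^{d+2}},T,d)$.

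With this uniform bound I would estimate $\partial_t \rho_n$ in $L^2(0,T;W^{-1,2}(\Omega))$ directly from the equation, the drift contributing $\diver(\rho_n b)$ with $\rho_n b \in L^2(Q_T)$ by the same critical Hölder pairing. Extracting a subsequence converging weakly in $\mathcal{H}^1_2(Q_T)$ and, by the Aubin--Lions lemma, strongly in $L^2(Q_T)$ (hence a.e.), I would pass to the limit in the weak formulation; the only delicate term $\int_{Q_T} \rho_n\, b \cdot D\varphi$ converges thanks to the uniform $L^2$ bound on $\rho_n b$ together with the a.e.\ convergence of $\rho_n$. The initial datum is recovered from the continuity embedding. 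Finally, uniqueness is immediate from linearity: the difference of two solutions solves the same problem with $f = g = 0$ and $\rho_0 = 0$, so the energy estimate forces it to vanish. I expect the borderline integrability of $b$ to be the persistent difficulty, which is exactly why the sharp parabolic embedding and the smallness splitting, rather than an $L^\infty$ control of the drift, are indispensable throughout.
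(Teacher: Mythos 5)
The paper does not prove this proposition at all: it is quoted directly from Ladyzhenskaya--Solonnikov--Ural'tseva (Theorem 4.1, p.~153), so there is no internal argument to compare against. What you have written is, in substance, the standard proof of that classical result, and it is correct. The ingredients all fit together as you describe: the eigenfunction basis is the right choice precisely because the Galerkin projection is then stable on $H^1_0(\Omega)$, which is what makes the $L^2(0,T;W^{-1,2}(\Omega))$ bound on $\partial_t\rho_n$ uniform; the Hölder identity $\tfrac{d}{2(d+2)}+\tfrac{1}{d+2}+\tfrac{1}{2}=1$ together with the Ladyzhenskaya embedding is exactly how the critical drift is paired; and the observation that $\rho_n b\in L^2(Q_T)$ by the same pairing (since $\tfrac{d}{2(d+2)}+\tfrac{1}{d+2}=\tfrac12$) is the key step that closes both the time-derivative estimate and the passage to the limit in the drift term. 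Uniqueness by linearity plus the energy estimate is also correct, granted the standard Lions--Magenes identity $\langle\partial_t\rho,\rho\rangle=\tfrac12\tfrac{d}{dt}\norm{\rho}{L^2}^2$ for $\rho\in\cH^1_2(Q_T)$.

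One caveat is worth recording. After the decomposition $b=b_1+b_2$ with $\norm{b_1}{L^{d+2}}$ small and $b_2\in L^\infty(Q_T)$, the Grönwall constant involves $\norm{b_2}{L^\infty}$, which is not a function of $\norm{b}{L^{d+2}}$ alone: two drifts with equal critical norm may require very different splittings, so the constant really depends on the equi-integrability of $\module{b}^{d+2}$ rather than on its total mass. Your argument therefore does not literally produce $C=C(\norm{b}{L^{d+2}},T,d)$ except when $\norm{b}{L^{d+2}}$ is small enough to be absorbed directly; this is a known subtlety of the critical exponent $q=r=d+2$ and is already implicit in the LSU statement being quoted. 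It is harmless for the paper (where $b=H_p(x,Du)$ is ultimately controlled in better-than-critical norms), but you should either state the dependence of the constant on the splitting honestly or add the smallness hypothesis under which the cleaner dependence holds.
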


We now recall the parabolic De Giorgi-Nash-Moser estimates.
\begin{prop}[De Giorgi-Nash-Moser, {\cite[Theorem 7.1 p.181]{LSU1968}}]\label{prop_parabolic_FP_DGNM}
 Let $b \in L^p(Q_T)$, $f \in L^{p/2}(Q_T)$, $g \in L^p(Q_T)$ for some $p > d+2$ and $\rho_0 \in L^\infty(\Omega)$. Then there exist $C_1 = C_1(\norm{b}{L^p}, T, d) > 0$  such that the unique weak solution $\rho \in \cH^1_2(Q_T)$ to \eqref{eq_mfg_parabolic_smooth_FP_simple} satisfies
 \begin{gather*}
   \norm{\rho}{L^\infty} \leq C_1 \left ( \norm{\rho_0}{L^\infty} + \norm{f}{L^{p/2}} + \norm{g}{L^p} \right).
   \end{gather*}
  Moreover, if $\rho_0 \in C^{\alpha}(\Omega)$ for some $\alpha \in (0,1)$, then there exists $C_2 = C_2(\norm{\rho_0}{\alpha}, \alpha, C_1, \norm{f}{L^{p/2}}, \norm{g}{L^p})$ such that
  \[
   \norm{\rho}{\beta} \leq C_2
  \]
  for some $\beta \in (0,\alpha]$.
\end{prop}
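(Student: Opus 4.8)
The statement is the classical parabolic De Giorgi--Nash--Moser theorem for a divergence-form equation with a drift, so the most economical option is simply to invoke \cite[Theorem 7.1 p.181]{LSU1968}; for completeness I sketch the self-contained argument, which splits into the global $L^\infty$ bound and the subsequent parabolic Hölder bound. The weak solution furnished by \cref{prop_parabolic_FP_wellposed} already lives in the energy class, on which the truncation method operates. Throughout, the point of the hypothesis $p > d+2$ is that, in the parabolic scaling, $b \in L^p$ together with $f \in L^{p/2}$ and $g \in L^p$ are all strictly subcritical, which is precisely what lets the iterations close with constants depending only on $\norm{b}{L^p}$, $T$ and $d$.

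For the $L^\infty$ estimate I would use the De Giorgi--Stampacchia truncation method. Fix a level $k \geq k_0$, where $k_0$ is a fixed multiple of $\norm{\rho_0}{L^\infty} + \norm{f}{L^{p/2}} + \norm{g}{L^p}$ to be chosen, and test the weak formulation of \eqref{eq_mfg_parabolic_smooth_FP_simple} against the truncation $(\rho - k)_+$. Writing $A_k := \{\rho > k\}$ and absorbing the drift contribution $\int_{A_k} \rho\, b \cdot D(\rho - k)_+$ via Young's inequality into the gradient term, one obtains the energy inequality
\[
\sup_{t} \int_{\Omega} (\rho - k)_+^2 \, dx + \int_{Q_T} \module{D(\rho - k)_+}^2 \leq C \int_{A_k} \left( \rho^2 \module{b}^2 + \module{f}\,(\rho - k)_+ + \module{g}^2 \right).
\]
Combining this with the parabolic Sobolev embedding $V_2(Q_T) \hookrightarrow L^{2(d+2)/d}(Q_T)$ on the energy space $V_2(Q_T) = L^\infty(0,T;L^2(\Omega)) \cap L^2(0,T;W^{1,2}_0(\Omega))$, and estimating the right-hand side by Hölder with the subcritical exponents coming from $p > d+2$, yields a recursive inequality for $Y_n := \norm{(\rho - k_n)_+}{V_2}$ along a geometric sequence of levels $k_n = k_0(2 - 2^{-n})$ of the form $Y_{n+1} \leq C\, b^n\, Y_n^{1+\delta}$ for some $\delta > 0$ and $b > 1$. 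Stampacchia's fast geometric convergence lemma then forces $Y_n \to 0$, hence $\rho \leq 2k_0$, once $k_0$ is large enough in terms of the data; applying the same argument to $-\rho$ controls $\rho$ from below and produces the claimed bound.

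Once $\rho$ is bounded, I would derive the parabolic Hölder estimate through the oscillation-decay mechanism. The core is a family of Caccioppoli inequalities on parabolic subcylinders $Q_r(t_0,x_0) \subset Q_T$ together with De Giorgi's measure lemma (the parabolic ``expansion of positivity''): if $\rho$ stays below the midrange of its oscillation on more than a fixed fraction of a cylinder, then on a concentric smaller cylinder it is strictly separated from the top of the range. This produces a decay estimate $\osc_{Q_r} \rho \leq \theta\, \osc_{Q_{2r}} \rho + C r^{\gamma}$ with $\theta \in (0,1)$, where the inhomogeneous term collects the contributions of $f$ and $g$ and is again controlled using $p > d+2$. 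Iterating over dyadic cylinders gives interior Hölder continuity with some exponent $\beta \in (0,\alpha]$. At boundary points the vanishing Dirichlet condition together with the uniform exterior cone condition on $\Omega$ guarantees that $\rho$ vanishes on a set of positive density near each boundary point, so the same machinery applied to the zero-extension yields boundary Hölder continuity; finally the assumption $\rho_0 \in C^\alpha(\Omega)$ controls the behaviour near the initial slice $\{t=0\}$, and assembling these estimates gives the global bound $\norm{\rho}{\beta} \leq C_2$.

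The main technical obstacle, and the reason the hypotheses require $p > d+2$ rather than the borderline exponent $d+2$, is the treatment of the divergence-form drift $\diver(\rho b)$ in both iterations. Keeping $C_1$ and $C_2$ dependent only on $\norm{b}{L^p}$, $T$ and $d$ forces every occurrence of $b$ to be absorbed by Young's inequality into the good gradient term, leaving a remainder that is integrable with a strictly positive power to spare; this surplus integrability is exactly the gap $p - (d+2) > 0$, and it is what drives both the Stampacchia iteration and the oscillation decay to convergence.
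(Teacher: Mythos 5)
The paper offers no proof of this proposition at all: it is recalled verbatim from \cite[Theorem 7.1 p.181]{LSU1968}, which is exactly the "most economical option" you lead with, so your approach coincides with the paper's. Your supplementary sketch of the De Giorgi truncation and oscillation-decay machinery is a reasonable outline of the classical argument, but it is not something the paper attempts or needs.
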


We recall the following fact about $L^p$ maximal regularity.
\begin{prop}[Maximal $L^p$ regularity, {\cite{HP1997}}]\label{prop_parabolic_W2p}
   Assume \ref{h_parabolic_regularity_q}  and let $2 \leq p \leq q/2$. Let also $f \in L^{p}(Q_T)$ and $v_0 \in W^{2,{p}}(\Omega) \cap W^{1,{p}}_0(\Omega)$. Then there exists a unique weak solution $v \in W^{1,2}_{p}(Q_T)$ to
   \begin{equation}
    \begin{cases}
     \partial_t v - \Delta v = f \quad & \textnormal{in } Q_T, \\
     v = 0 \quad & \textnormal{on } (0,T) \times \partial \Omega, \\
     v(0,\cdot) = v_0 \quad & \textnormal{in } \Omega,
    \end{cases}
   \end{equation}
  and there exists $C = C(p,T,d)$ such that
   \[
    \norm{v}{W^{1,2}_{p}} \leq C \left ( \norm{f}{L^{p}} + \norm{v_0}{W^{2,{p}}} \right).
   \]
\end{prop}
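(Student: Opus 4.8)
The plan is to recast the boundary value problem as an abstract Cauchy problem and to invoke the maximal $L^p$ regularity of the Dirichlet Laplacian. I would set $X = L^p(\Omega)$ and let $A = -\Delta$ be the realization of the Laplacian on $X$ with domain $D(A) = W^{2,p}(\Omega) \cap W^{1,p}_0(\Omega)$, so that the equation reads $\partial_t v + A v = f$ on $(0,T)$ with $v(0) = v_0$. The crucial structural input is assumption \ref{h_parabolic_regularity_q}, which ensures that on $D(A)$ the graph norm $\norm{u}{L^p} + \norm{A u}{L^p}$ is equivalent to the full $W^{2,p}(\Omega)$ norm; this is precisely what will let me translate abstract estimates into the concrete $W^{1,2}_p(Q_T)$ bound, and in particular it shows $v_0 \in D(A)$.

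First I would reduce to $v_0 = 0$ by splitting $v = v_1 + v_2$, where $v_1(t) = e^{-tA} v_0$ solves the homogeneous problem and $v_2$ solves the inhomogeneous problem with vanishing initial datum. Since $v_0 \in D(A)$ and the bounded analytic semigroup $(e^{-tA})_{t \geq 0}$ commutes with $A$, we have $\partial_t v_1(t) = -e^{-tA} A v_0$, whence $\norm{\partial_t v_1(t)}{L^p} \leq \norm{A v_0}{L^p}$ and likewise $\norm{A v_1(t)}{L^p} \leq \norm{A v_0}{L^p}$ for every $t$. Integrating over $(0,T)$ and using \ref{h_parabolic_regularity_q} to pass from $\norm{A v_1}{L^p}$ to $\norm{v_1}{W^{2,p}}$ gives $\norm{v_1}{W^{1,2}_p} \leq C T^{1/p}\, \norm{v_0}{W^{2,p}}$.

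For $v_2$ I would appeal to maximal regularity. The Dirichlet Laplacian generates a bounded analytic semigroup on $L^p(\Omega)$ whose kernel satisfies Gaussian upper bounds (the domain satisfies the cone conditions assumed throughout), so by \cite{HP1997} the operator $A$ has maximal $L^p$ regularity, yielding $\norm{\partial_t v_2}{\Ltx{p}} + \norm{\Delta v_2}{\Ltx{p}} \leq C \norm{f}{\Ltx{p}}$. Applying \ref{h_parabolic_regularity_q} pointwise in time upgrades $\norm{\Delta v_2}{\Ltx{p}}$ to $\norm{v_2}{L^p(0,T;W^{2,p})}$, while the first-order term is absorbed through the interpolation inequality $\norm{D w}{L^p(\Omega)} \leq \epsilon \norm{D^2 w}{L^p(\Omega)} + C_\epsilon \norm{w}{L^p(\Omega)}$, valid for $w \in W^{2,p}(\Omega) \cap W^{1,p}_0(\Omega)$. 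Combining the estimates for $v_1$ and $v_2$ produces the asserted bound, and uniqueness follows from the injectivity encoded in maximal regularity, or equivalently from a standard energy estimate for the difference of two solutions.

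The genuinely hard part is the maximal regularity statement itself: it is the deep functional-analytic ingredient, resting on the UMD property of $L^p(\Omega)$ together with the $R$-sectoriality of $A$ (or, as in \cite{HP1997}, the Gaussian heat-kernel bounds), and I would cite it rather than reprove it. By contrast, the domain identification via \ref{h_parabolic_regularity_q}, the homogeneous-datum estimate, and the interpolation step recovering the gradient are all routine once maximal regularity is in hand.
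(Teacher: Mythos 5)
The paper does not prove this proposition at all: it is quoted directly from \cite{HP1997}, so there is no internal argument to compare against. Your proposal is the standard (and correct) derivation of the stated form from that reference --- Gaussian heat-kernel bounds give abstract maximal $L^p$ regularity for the Dirichlet Laplacian, the initial datum is split off via the analytic semigroup, and \ref{h_parabolic_regularity_q} converts $\norm{\Delta v}{L^p}$ into the full $W^{2,p}$ norm --- which matches what the citation is implicitly being asked to deliver. The only point worth tightening is that \ref{h_parabolic_regularity_q} as written is an a priori estimate on $W^{2,p}(\Omega)\cap W^{1,p}_0(\Omega)$, whereas your argument needs the identification $D(A)=W^{2,p}(\Omega)\cap W^{1,p}_0(\Omega)$ for the $L^p$-realization of the Dirichlet Laplacian, i.e.\ the solvability form of the shift property; this is what the references in \cref{rem_mfg_parabolic_smooth_elliptic_reg} actually supply, so the gap is one of phrasing rather than substance.
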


We conclude this section with a compact embedding lemma which will be useful below.
\begin{lem}\label{lem:parabolic_compact_embedding}
  Assume that $p > d+3$ and $p \geq 2d$. Then the embedding $W^{1,2}_{p/2}(Q_T) \hookrightarrow W^{0,1}_{p}(Q_T)$ is compact.
\end{lem}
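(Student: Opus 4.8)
The plan is to combine an Aubin--Lions--Simon compactness argument with the parabolic Sobolev embedding of $W^{1,2}_{p/2}(Q_T)$, and then to interpolate between the resulting spaces in order to reach the norm of $W^{0,1}_p(Q_T)$. Set $X := W^{2,p/2}(\Omega) \cap W^{1,p/2}_0(\Omega)$, $B := W^{1,p}_0(\Omega)$ and $Y := L^{p/2}(\Omega)$. First I would use the hypothesis $p \geq 2d$, which gives $p/2 \geq d$: then $Du \in W^{1,p/2}(\Omega)$ and, since $p/2 \geq d$, the embedding $W^{1,p/2}(\Omega) \hookrightarrow L^p(\Omega)$ is compact, so that $X$ is compactly embedded in $B$; moreover $B$ is continuously embedded in $Y$ because $\Omega$ is bounded and $p > p/2$. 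Given a bounded sequence $(u_n)$ in $W^{1,2}_{p/2}(Q_T)$, it is bounded in $L^{p/2}(0,T;X)$ with $(\partial_t u_n)$ bounded in $L^{p/2}(0,T;Y)$ and $p/2 > 1$; by the Aubin--Lions--Simon lemma $(u_n)$ is relatively compact in $L^{p/2}(0,T;B)$. Passing to a subsequence, I obtain $u_n \to u$ in $L^{p/2}(0,T;W^{1,p}_0(\Omega))$, hence $u_n \to u$ and $Du_n \to Du$ in $L^{p/2}(0,T;L^p(\Omega))$.

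Next I would invoke the hypothesis $p > d+3 > d+2$ together with the parabolic Sobolev embedding for the anisotropic space $W^{1,2}_{p/2}(Q_T)$ (whose parabolic homogeneous dimension is $d+2$): since $\tfrac{2}{p} - \tfrac{1}{d+2} < \tfrac1p$, the gradient gains enough integrability that $W^{1,2}_{p/2}(Q_T)$ embeds continuously into $W^{0,1}_q(Q_T)$ for some $q > p$ (one may take $\tfrac1q = \tfrac2p - \tfrac1{d+2}$ when $p < 2d+4$, and $Du \in L^\infty(Q_T)$ otherwise). In particular $(u_n)$ and $(Du_n)$ are bounded in $L^q(Q_T)$. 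It then remains to upgrade the time exponent from $p/2$ to $p$. To this end I set $\phi_n(t) := \norm{Du_n(t) - Du(t)}{L^p(\Omega)}$; since $\Omega$ is bounded and $q > p$, one has $\phi_n(t) \leq C\,\norm{Du_n(t) - Du(t)}{L^q(\Omega)}$, so $(\phi_n)$ is bounded in $L^q(0,T)$, while the previous step gives $\phi_n \to 0$ in $L^{p/2}(0,T)$. Choosing $\theta \in (0,1)$ with $\tfrac1p = \tfrac{1-\theta}{p/2} + \tfrac{\theta}{q}$ and interpolating the scalar Lebesgue norms yields $\norm{\phi_n}{L^p(0,T)} \leq \norm{\phi_n}{L^{p/2}(0,T)}^{1-\theta}\,\norm{\phi_n}{L^q(0,T)}^{\theta} \to 0$, that is $Du_n \to Du$ in $L^p(Q_T)$. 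Applying the same argument to $u_n - u$ gives $u_n \to u$ in $L^p(Q_T)$, whence $u_n \to u$ in $W^{0,1}_p(Q_T)$ and the embedding is compact.

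The step I expect to be the main obstacle is precisely this mismatch in time integrability: the Aubin--Lions--Simon lemma only produces compactness with the time exponent $p/2$ inherited from $\partial_t u_n \in L^{p/2}$, whereas the target space $W^{0,1}_p(Q_T)$ demands the exponent $p$. The difficulty is overcome by the parabolic Sobolev embedding, which, thanks to $p > d+2$, delivers a \emph{strict} gain of space-time integrability $q > p$ and thereby creates exactly the room required for the final interpolation. This also clarifies the role of the two hypotheses: $p \geq 2d$ guarantees the compact spatial embedding into $W^{1,p}$, while $p > d+3$ guarantees the strict integrability gain $q > p$.
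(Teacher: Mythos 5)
Your proof is correct, but it takes a genuinely different route from the paper's. The paper gets compactness in a single stroke from the Aubin--Dubinskii lemma of Amann: it identifies $W^{1+d/p,p/2}(\Omega)$ with the real interpolation space $\left(L^{p/2}(\Omega), W^{2,p/2}(\Omega)\right)_{\theta,p/2}$, $\theta = \frac{p+d}{2p}$, embeds this space into $W^{1,p}(\Omega)$, and checks the exponent condition $\frac{2}{p} - (1-\theta) < \frac{1}{p}$ (this is where $p>d+3$ enters), which yields compactness directly into $L^p(0,T;W^{1,p}(\Omega))$; the continuous embedding, and hence the hypothesis $p \geq 2d$, is handled separately via the parabolic Sobolev inequality. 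You instead run the classical Aubin--Lions--Simon lemma on the triple $W^{2,p/2}(\Omega) \hookrightarrow W^{1,p}(\Omega) \hookrightarrow L^{p/2}(\Omega)$, which only produces strong convergence with the time exponent $p/2$ inherited from $\partial_t u_n$, and you then upgrade to exponent $p$ by interpolating against the uniform $W^{0,1}_q(Q_T)$ bound with $q>p$ coming from the parabolic Sobolev embedding (where $p>d+2$ gives the strict gain); the hypothesis $p \geq 2d$ is used for the compact spatial embedding. Both arguments are sound; yours is more elementary and self-contained, avoiding the interpolation-space identification and Amann's refined lemma at the price of a two-step interpolation. One harmless technicality: Simon's lemma, in its standard form, gives relative compactness in $L^s(0,T;W^{1,p}(\Omega))$ for $s < p/2$ rather than at the endpoint $s=p/2$, but your final interpolation only needs convergence at some exponent below $p$ and boundedness at some exponent above $p$, so the argument goes through unchanged.
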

\begin{proof}
 From the parabolic Sobolev inequality \cite[Theorem 6.11]{L1996}, we have the continuous embedding $W^{1,2}_{p/2}(Q_T) \hookrightarrow W^{0,1}_r(Q_T)$, with $r = \frac{p \left(d + p/2 \right)}{2d}$. Since $r \geq p$ if $p \geq 2d$, we obtain the continuous embedding $W^{1,2}_{p/2}(Q_T) \hookrightarrow W^{0,1}_{p}(Q_T)$. In order to prove the compactness of the embedding, we first notice that
 \[
  W^{1 + d/p,p/2}(\Omega) \hookrightarrow W^{1,p}(\Omega)
 \]
 according to \cite[Theorem 6.5]{DNPV2012}. Moreover, we have
 \[
  W^{1 + d/p,p/2}(\Omega) = \left(L^{p/2}(\Omega), W^{2,p/2}(\Omega) \right)_{\theta,p/2},
 \]
 where $\theta = \frac{p+d}{2p}$ and $\left( \cdot, \cdot \right)_{\theta,p/2}$ denotes the real interpolation functor \cite{L2018}. It follows that
 \[
  \left(L^{p/2}(\Omega), W^{2,p/2}(\Omega) \right)_{\theta,p/2} \hookrightarrow W^{1,p}(\Omega).
 \]
 Since $p > d+3$, we have $\frac{2}{p} - \left(1 - \theta \right) < \frac{1}{p}$ and it follows from the Aubin-Dubinskii lemma \cite{A2000} that the embedding $W^{1,2}_{p/2}(Q_T) \hookrightarrow W^{0,1}_p(Q_T)$ is compact.
\end{proof}

\subsection{Well-posedness}
This section is dedicated to the study of the well-posedness of the MFG system \eqref{eq_mfg_parabolic_smooth}. As a first step, we study the Hamilton-Jacobi independently of the MFG system. We then turn to the MFG system and conclude this section by considering the class of stable solutions to to system.

\subsubsection{Viscous Hamilton-Jacobi equations on nonsmooth domains}

In this section we study the well posedness of Hamilton-Jacobi equation of the form
\begin{equation}\label{eq_mfg_parabolic_smooth_HJ}
\begin{cases}
 \partial_t u - \Delta u + H(x,Du) = f \quad & \textnormal{in $Q_T$}, \\
 u = 0 \quad & \textnormal{on } (0,T) \times \partial \Omega, \\
 u(0,\cdot) = u_0 \quad & \textnormal{on } \Omega,
\end{cases}
\end{equation}
where we assume that $f \in L^\infty(Q_T)$, $u_0 \in W^{2,\infty}(\Omega)$ and $H$ is a Carathéodory function satisfying
\begin{equation}\label{eq_mfg_parabolic_smooth_HJ_growth}
 \module{H(x,p)} \leq C_H\left( 1 + \module{p}^2 \right)
\end{equation}
for some $C_H > 0$. We also assume that $H$ is locally Lipschitz continuous with respect to its second variable and assume that
 \begin{equation}\label{eq_mfg_parabolic_smooth_HJ_clarke_growth}
  \xi \in \partial^C H(x,p) \Longrightarrow \module{\xi} \leq C_H \left( 1 + \module{p} \right),
 \end{equation}
 where $\partial^C H$ denotes Clarke's subdifferential with respect to the second variable\footnote{See \cref{section:nemytskii}.}.

\begin{defi}[Weak solution]
 We say that $u \in L^2(0,T;H^1(\Omega))$ with $\partial_t u \in L^2(0,T;H^{-1}(\Omega))$ is a \emph{weak subsolution} \index{weak solution} to \eqref{eq_mfg_parabolic_smooth_HJ} if
 \begin{multline}
  \int_0^t \langle \partial_t u(s), \phi(s) \rangle_{H^{-1},H^1} + \int_\Omega Du(s,x)\cdot D \phi(s,x) + H(x,Du(s,x)) \phi(s,x) \, dx ds \\ \leq \int_0^t \int_\Omega f(s,x) \phi(s,x) \, dx ds
 \end{multline}
 for all $t \in (0,T)$ and $\phi \in C^{\infty}_c([0,T] \times \Omega)$, $u(0,\cdot) \leq u_0$ almost everywhere and $u(t,\cdot) \leq 0$ on $\partial \Omega$ in the sense of traces for a.e. $t \in (0,T)$. Similarly, it is a \emph{weak supersolution} if
 \begin{multline}
  \int_0^t \langle \partial_t u(s), \phi(s) \rangle_{H^{-1},H^1} + \int_\Omega Du(s,x)\cdot D \phi(s,x) + H(x,Du(s,x)) \phi(s,x) \, dx ds \\ \geq \int_0^t \int_\Omega f(s,x) \phi(s,x) \, dx ds
 \end{multline}
 for all $t \in (0,T)$ and $\phi \in C^{\infty}_c([0,T] \times \Omega)$, $u(0,\cdot) \geq u_0$ almost everywhere and $u(t,\cdot) \geq 0$ on $\partial \Omega$ in the sense of traces for a.e. $t \in (0,T)$. Finally it is a \emph{weak solution} if it is both a weak sub- and supersolution to \eqref{eq_mfg_parabolic_smooth_HJ}.
\end{defi}

\begin{rem}
 Notice that if $u$ is a weak solution to \eqref{eq_mfg_parabolic_smooth_HJ}, then \cite[Theorem 2 p.273]{E2010} implies that $u(t,\cdot) \in H^1_0(\Omega)$ for a.e. $t \in (0,T)$, so that $u \in \cH^1_2(Q_T)$.
\end{rem}

The Hamilton-Jacobi equation \eqref{eq_mfg_parabolic_smooth_HJ} satisfies the following comparison principle, which implies uniqueness of solutions. Its proof is postponed to \cref{section:proof_comparison}.
\begin{prop}[Comparison principle]\label{prop_comparison_principle}
 Let $u$ and $v$ be weak sub- and supersolutions to \eqref{eq_mfg_parabolic_smooth_HJ}, respectively, and assume that $Du, \, Dv \in L^{d+2}(Q_T)$. Then $u \leq v$ almost everywhere. In particular there exists at most one weak solution to \eqref{eq_mfg_parabolic_smooth_HJ}  having $Du \in L^{d+2}(Q_T;\RR^d)$.
\end{prop}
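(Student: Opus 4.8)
The plan is to establish the ordering $u \le v$ by an energy estimate on the positive part $w_+ := (u-v)_+$, using the local Lipschitz structure of $H$ to control the Hamiltonian difference, the dissipation term to absorb the gradient, and the parabolic Sobolev embedding together with a short-time absorption plus iteration to handle the critical quadratic growth. Uniqueness will then be immediate by applying the comparison in both directions.

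First I would subtract the two variational inequalities. Writing $w = u - v$, for every nonnegative admissible test function $\phi$ one gets
\[
 \int_0^t \langle \partial_t w(s), \phi(s) \rangle_{H^{-1},H^1} + \int_\Omega Dw \cdot D\phi + \big( H(x,Du) - H(x,Dv) \big)\phi \, dx\, ds \le 0.
\]
The boundary conditions give $u \le 0 \le v$ on $(0,T)\times\partial\Omega$, hence $w \le 0$ there and $w_+(s,\cdot) \in H^1_0(\Omega)$ for a.e. $s$; the initial conditions give $w(0,\cdot) \le 0$, hence $w_+(0,\cdot) = 0$. Since $u,v \in \cH^1_2(Q_T)$ we have $w \in C([0,T];L^2(\Omega))$, so $w_+$ lies in the energy space $L^\infty(0,T;L^2(\Omega)) \cap L^2(0,T;H^1_0(\Omega))$. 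By a standard approximation argument (Steklov averaging in time and density of $C^\infty_c(\Omega)$ in $H^1_0(\Omega)$), I may take $\phi = w_+$, and the chain rule in the Gelfand triple $H^1_0 \subset L^2 \subset H^{-1}$ turns the time term into $\int_0^t \langle \partial_t w, w_+ \rangle = \tfrac12 \norm{w_+(t)}{L^2}^2$.

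For the Hamiltonian term I would invoke Lebourg's mean value theorem for the Clarke subdifferential (\cref{section:nemytskii}): for a.e. $(s,x)$ there is $\xi \in \partial^C H(x,z)$ with $z$ on the segment $[Dv,Du]$ such that $H(x,Du) - H(x,Dv) = \xi \cdot Dw$, and the growth bound \eqref{eq_mfg_parabolic_smooth_HJ_clarke_growth} yields $\module{\xi} \le g := C_H(1 + \module{Du} + \module{Dv})$, with $g \in L^{d+2}(Q_T)$ because $Du, Dv \in L^{d+2}(Q_T)$. On $\{w>0\}$ we have $Dw_+ = Dw$, so Young's inequality gives $g\,\module{Dw_+}\,w_+ \le \tfrac12 \module{Dw_+}^2 + \tfrac12 g^2 w_+^2$; the first term is absorbed into the dissipation, leaving
\[
 \norm{w_+(t)}{L^2}^2 + \int_0^t \norm{Dw_+(s)}{L^2}^2 \, ds \le \int_0^t \int_\Omega g^2 w_+^2 \, dx\, ds.
\]

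The main obstacle is the right-hand side, which is critical since $g$ only belongs to $L^{d+2}$; here I would exploit the parabolic structure. On $Q_\tau := (0,\tau)\times\Omega$ set $N(\tau) := \sup_{(0,\tau)} \norm{w_+(t)}{L^2}^2 + \int_0^\tau \norm{Dw_+}{L^2}^2\, ds$. The Ladyzhenskaya embedding of the energy space into $L^{2(d+2)/d}(Q_\tau)$, with a $\tau$-independent constant, combined with Hölder's inequality for the conjugate exponents $\tfrac{d+2}{2}$ and $\tfrac{d+2}{d}$ (so that $g^2 \in L^{(d+2)/2}$ and $w_+^2 \in L^{(d+2)/d}$), gives
\[
 \int_{Q_\tau} g^2 w_+^2 \le \norm{g}{L^{d+2}(Q_\tau)}^2 \, \norm{w_+}{L^{2(d+2)/d}(Q_\tau)}^2 \le C \, \norm{g}{L^{d+2}(Q_\tau)}^2 \, N(\tau).
\]
By absolute continuity of the integral, $\norm{g}{L^{d+2}(Q_\tau)}$ is small on short intervals, so for $\tau_0$ small enough $N(\tau_0) \le \tfrac12 N(\tau_0)$, forcing $w_+ \equiv 0$ on $Q_{\tau_0}$. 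Partitioning $[0,T]$ into finitely many subintervals on each of which $\norm{g}{L^{d+2}}$ lies below the (uniform) threshold, and iterating — each step restarting from the vanishing datum $w_+ = 0$ produced by the previous one — yields $w_+ \equiv 0$ on all of $Q_T$, i.e. $u \le v$. Finally, given two weak solutions with gradients in $L^{d+2}(Q_T)$, each is simultaneously a sub- and a supersolution, so applying the comparison in both orders gives equality and hence uniqueness.
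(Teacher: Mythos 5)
Your proof is correct and follows essentially the same route as the paper's: Lebourg's mean value theorem for the Clarke subdifferential to linearize the Hamiltonian difference with a coefficient in $L^{d+2}(Q_T)$, testing with $(u-v)_+$, absorbing the gradient via Young, controlling the remainder through H\"older and the parabolic Sobolev (Ladyzhenskaya) embedding, and concluding by short-time absorption using the absolute continuity of the $L^{d+2}$ norm followed by iteration over subintervals. The only difference is immaterial bookkeeping in the order of the H\"older and Young steps.
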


Regarding existence of solutions to \eqref{eq_mfg_parabolic_smooth_HJ}, we have the following result. Its proof is based on the Leray-Schauder fixed point theorem and can be found in \cref{section:proof_HJ_wp}.
\begin{thm}\label{thm_mfg_parabolic_HJ_wp}
 In addition to \eqref{eq_mfg_parabolic_smooth_HJ_growth} and \eqref{eq_mfg_parabolic_smooth_HJ_clarke_growth}, assume that \ref{h_parabolic_regularity_q} holds. Then there exists a weak solution $u \in W^{1,2}_{q/2}(Q_T) \cap W^{0,1}_q(Q_T)$ to \eqref{eq_mfg_parabolic_smooth_HJ}, where $q$ is defined in \ref{h_parabolic_regularity_q}. Moreover, for any weak solution $u$ to \eqref{eq_mfg_parabolic_smooth_HJ} with $Du \in L^{d+2}(Q_T)$, we have
 \[
  \norm{u}{W^{1,2}_{q/2}} + \norm{u}{W^{0,1}_q} \leq C
 \]
 for some $C > 0$ depending on $d$, $T$, $C_H$, $\norm{f}{L^{q/2}}$ and $\norm{u_0}{W^{2,q/2}}$.
\end{thm}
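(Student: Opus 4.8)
The plan is to obtain existence through the Leray--Schauder fixed point theorem, using the a priori bound both as the key hypothesis of that theorem and as the quantitative conclusion of the statement. First I would define a map $\Phi \colon W^{0,1}_q(Q_T) \to W^{0,1}_q(Q_T)$ by setting $\Phi(w) = u$, where $u$ is the solution of the linear heat equation $\partial_t u - \Delta u = f - H(x,Dw)$ with the boundary and initial data of \eqref{eq_mfg_parabolic_smooth_HJ}. If $w \in W^{0,1}_q(Q_T)$ then $Dw \in L^q(Q_T)$, so \eqref{eq_mfg_parabolic_smooth_HJ_growth} gives $H(\cdot,Dw) \in L^{q/2}(Q_T)$; since $q/2 \geq 2$ and $q/2 \leq q/2$, \cref{prop_parabolic_W2p} yields $u \in W^{1,2}_{q/2}(Q_T)$ together with a continuous dependence on the data. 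The hypotheses $q > d+3$ and $q \geq 2d$ of \cref{lem:parabolic_compact_embedding} hold because $q > 2(d+2)$, so $\Phi$ factors through the compact embedding $W^{1,2}_{q/2}(Q_T) \hookrightarrow W^{0,1}_q(Q_T)$ and is therefore compact; its continuity follows from that of the Nemytskii operator $w \mapsto H(\cdot,Dw)$ on the relevant Lebesgue spaces together with the continuity of the linear solution operator.

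Next I would verify that the fixed points of $\sigma \Phi$, for $\sigma \in [0,1]$, namely the weak solutions of $\partial_t u - \Delta u + \sigma H(x,Du) = \sigma f$, are bounded in $W^{0,1}_q(Q_T)$ uniformly in $\sigma$. Since $\sigma H$ and $\sigma f$ obey the same bounds as $H$ and $f$, it suffices to establish the estimate of the statement with constants independent of $\sigma$, which simultaneously proves the ``moreover'' part; note that every such fixed point lies in the range of $\Phi$, hence in $W^{1,2}_{q/2}(Q_T)$, so that $Du \in L^q(Q_T) \subset L^{d+2}(Q_T)$ automatically. The argument starts with the $L^\infty$ bound: the affine-in-time functions $\pm(\norm{u_0}{\infty} + (\norm{f}{\infty} + C_H)t)$ are, respectively, super- and subsolutions (using $\module{H(x,0)} \leq C_H$ from \eqref{eq_mfg_parabolic_smooth_HJ_growth}), and since their gradients vanish and $Du \in L^{d+2}(Q_T)$, the comparison principle \cref{prop_comparison_principle} gives $\norm{u}{\infty} \leq M_0 := \norm{u_0}{\infty} + (\norm{f}{\infty} + C_H)T$.

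The crux is then to upgrade $Du \in L^{d+2}(Q_T)$ to $Du \in L^q(Q_T)$, equivalently $u \in W^{1,2}_{q/2}(Q_T)$. I would first establish a uniform parabolic Hölder bound $u \in C^{\beta/2,\beta}(\overline{Q_T})$ by a De Giorgi--Nash--Moser-type argument adapted to the natural quadratic growth, in which the $L^\infty$ bound $M_0$ (and not any smallness) is what controls the term $\module{Du}^2$; here the linear-growth bound \eqref{eq_mfg_parabolic_smooth_HJ_clarke_growth} on the Clarke subdifferential, which lets one write the first-order term as a drift $\xi \cdot Du$ with $\module{\xi} \leq C_H(1 + \module{Du})$, is essential. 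The Hölder continuity makes the oscillation of $u$ small on small parabolic cylinders, so that, writing $u$ as a constant plus a function of small sup-norm and applying the Gagliardo--Nirenberg interpolation $\norm{Du}{L^{2p}}^2 \lesssim \osc(u)\,\norm{D^2 u}{L^p}$, the quadratic term becomes absorbable into the maximal regularity estimate of \cref{prop_parabolic_W2p}. This yields local $W^{1,2}_p$ bounds; iterating the integrability gain of the parabolic Sobolev embedding of $W^{1,2}_p(Q_T)$ and covering $Q_T$ by finitely many such cylinders (with separate boundary and initial-layer estimates using $u_0 \in W^{2,\infty}(\Omega)$) raises $p$ up to $q/2$. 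The embedding $W^{1,2}_{q/2}(Q_T) \hookrightarrow W^{0,1}_q(Q_T)$, valid because $q \geq 2d$, then transfers the bound to $W^{0,1}_q(Q_T)$. With this uniform bound, Leray--Schauder provides a fixed point $u = \Phi(u)$, which is the desired weak solution in $W^{1,2}_{q/2}(Q_T) \cap W^{0,1}_q(Q_T)$, and the same bound is the asserted estimate.

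I expect the gradient bootstrap of the third paragraph to be the main obstacle. The quadratic growth is exactly critical, so no iteration based solely on $Du \in L^{d+2}(Q_T)$ gains integrability: the effective Sobolev exponent improves only once $Du$ lies past the threshold $L^{2d}(Q_T)$, which is strictly better than the assumed $L^{d+2}(Q_T)$ when $d > 2$. Moreover the size of $M_0$ forbids any direct absorption on all of $Q_T$. The decisive mechanism is therefore the interplay between the $L^\infty$ bound, the linear growth \eqref{eq_mfg_parabolic_smooth_HJ_clarke_growth} of the Hamiltonian's subdifferential, and the oscillation decay coming from the De Giorgi estimate, which together localize the problem and demote the quadratic term to a subordinate perturbation; carrying out the Hölder estimate and the localized absorption rigorously in the nonsmooth-domain, weak-solution setting is the technically heavy part.
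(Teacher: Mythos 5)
Your proposal is correct in outline and follows the same skeleton as the paper's proof: the same fixed-point map $\Phi(w)=v$ solving the linear heat equation with right-hand side $f-H(x,Dw)$, compactness of $\Phi$ on $W^{0,1}_q(Q_T)$, the Leray--Schauder alternative, the $L^\infty$ bound for fixed points of $\sigma\Phi$ via comparison with the affine barriers $\pm(\norm{u_0}{L^\infty}+(C_H+\norm{f}{L^\infty})t)$, and the recognition that an a priori parabolic H\"older estimate is the only mechanism that can break the criticality of the quadratic gradient growth. Where you diverge is in the execution of the crucial absorption step. The paper does not localize: it quotes the H\"older bound directly from Ladyzhenskaya--Solonnikov--Uraltseva (Theorem 1.1 p.\ 419 there, via \cref{cor_comparison_principle}) and then applies a single global Miranda--Nirenberg interpolation $\norm{Du(t,\cdot)}{L^q}\leq C_1\norm{D^2u(t,\cdot)}{L^{q/2}}^{\theta}\norm{u}{\alpha/2,\alpha}^{1-\theta}+C_2\norm{u}{\alpha/2,\alpha}$ with $\theta<1/2$; the point is that interpolating against the H\"older norm rather than $L^\infty$ pushes the exponent strictly below $1/2$, so $\norm{Du}{L^q}^2\lesssim\norm{u}{W^{1,2}_{q/2}}^{2\theta}$ is sublinear and Young's inequality closes the estimate on all of $Q_T$ in one stroke, with no covering, no cutoffs, and no iteration of Sobolev exponents. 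Your small-oscillation localization exploits the same H\"older information (smallness of $\osc(u)$ on cylinders of radius $r$ is just $[u]_{\alpha}r^\alpha$) but pays for it with local maximal regularity near a nonsmooth boundary, commutator terms from cutoffs, and a covering argument; moreover, the iteration of the parabolic Sobolev embedding that you invoke to raise the integrability gains nothing by itself at the critical exponent (starting from $Du\in L^{d+2}$ the embedding returns exactly $L^{d+2}$), so your bootstrap only closes because of the small-oscillation coefficient, not the Sobolev gain --- which is precisely the $\theta<1/2$ phenomenon in disguise. Both routes are viable; the paper's is substantially lighter, and you would do well to replace your third paragraph by the global interpolation argument.
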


\subsubsection{Well-posedness of the MFG system}

There exist many existence results for MFG systems similar to \eqref{eq_mfg_parabolic_smooth}. However, existing results seem to only consider periodic boundary conditions \cite{LL2007,CP2020}, assume the Lipschitz continuity of the Hamiltonian \cite{OS2025}, or smoothness of the boundary of the domain \cite{P2015,C2015}. The next result proves the existence of solutions to \eqref{eq_mfg_parabolic_smooth} in the case of possibly nonsmooth domains (satisying \ref{h_parabolic_regularity_q}) and smooth Hamiltonians having up to quadratic growth.

\begin{defi}[Weak solutions]
 By a weak solution to \eqref{eq_mfg_parabolic_smooth}, we mean a pair $(u,m)$ with $u \in \cH^1_2(Q_T)$ and $m \in C([0,T], L^2(\Omega))$ such that

 \begin{multline}
  \int_0^t - \langle \partial_t u(s) \phi(s) \rangle_{H^{-1},H^1} + \int_\Omega Du(s,x) \cdot D \phi(s,x) + H(x,Du(s,x)) \phi(s,x)\, dx ds \\ = \int_0^t \int_{\Omega} F[m(s)](x) \phi(s,x)\, dx ds
 \end{multline}
for every $t\in (0,T)$ and $\phi \in C_c^\infty((0,T) \times \Omega)$,

\begin{multline}
 \int_0^t \int_{\Omega} \left( - \partial_t \psi(s,x) - \Delta \psi(s,x) + H_p(x,Du(s,x)) \cdot D \psi(s,x) \right ) m(s,x) \, dx ds = 0
\end{multline}
for every $t\in (0,T)$ and $\psi \in C_c^\infty((0,T) \times \Omega)$, $u(T,\cdot) = u_T$ and $m(0,\cdot) = m_0$.
\end{defi}

\begin{rem}
 Notice that the condition $u(T,\cdot) = u_T$ has a meaning for $u \in \cH^1_2(Q_T)$. Indeed, we recall that there exists a continuous embedding $\cH^1_2(Q_T) \hookrightarrow C([0,T],L^2(\Omega))$, see for instance \cite[Theorem 3.1 p. 19]{LM1972}.
\end{rem}

\begin{defi}[Lasry-Lions condition]
 We say that the \emph{Lasry-Lions condition}\index{Lasry-Lions condition} holds for the MFG system \eqref{eq_mfg_parabolic_smooth} if one of the following holds
 \begin{enumerate}[label={\rm{(\roman*)}}]
  \item the Hamiltonian $H$ is convex and $F \colon L^2(Q_T) \to L^2(Q_T)$ is strictly monotone;

  \item the Hamiltonian $H$ is strictly convex and $F\colon L^2(Q_T) \to L^2(Q_T)$ is monotone.
 \end{enumerate}
 Moreover, we say that the \emph{strong Lasry-Lions condition}\index{Lasry-Lions condition!strong} holds if one of the following holds
 \begin{enumerate}[label={\rm{(\roman*)}}]
  \item the Hamiltonian $H$ is convex and $F \colon L^2(Q_T) \to L^2(Q_T)$ is strongly monotone;

  \item the Hamiltonian satisfies $H_{pp}(x,p) \geq C_H^{-1} I$ and $F \colon L^2(Q_T) \to L^2(Q_T)$ is monotone.
\end{enumerate}
\end{defi}

We are going to express solutions to \eqref{eq_mfg_parabolic_smooth} as zeros of some abstract mapping $\Upsilon$. Keeping things formal for now, we define the linear operators $S_I$, $S_T$, $S_{IS}$ and $S_{TS}$ by
 \begin{equation}\label{eq_mfg_parabolic_smooth_SI}
  S_I(\rho_0) = \rho, \quad \textnormal{where} \quad \begin{cases}
                                                \partial_t \rho - \Delta \rho = 0 \quad & \textnormal{in } Q_T, \\
                                                \rho = 0 \quad & \textnormal{on } (0,T) \times \partial \Omega, \\
                                                \rho(0,\cdot) = \rho_0 \quad & \textnormal{in } \Omega,
                                               \end{cases}
 \end{equation}
 \begin{equation}\label{eq_mfg_parabolic_smooth_ST}
  S_T(v_T) = v, \quad \textnormal{where} \quad \begin{cases}
                                                - \partial_t v - \Delta v = 0 \quad & \textnormal{in } Q_T, \\
                                                v = 0 \quad & \textnormal{on } (0,T) \times \partial \Omega, \\
                                                v(T,\cdot) = v_T \quad & \textnormal{in } \Omega,
                                               \end{cases}
 \end{equation}
  \begin{equation}\label{eq_mfg_parabolic_smooth_SIS}
  S_{IS}(g) = \rho, \quad \textnormal{where} \quad \begin{cases}
                                                \partial_t \rho - \Delta \rho = g \quad & \textnormal{in } Q_T, \\
                                                \rho = 0 \quad & \textnormal{on } (0,T) \times \partial \Omega, \\
                                                \rho(0,\cdot) = 0\quad & \textnormal{in } \Omega,
                                               \end{cases}
 \end{equation}
 and
\begin{equation}\label{eq_mfg_parabolic_smooth_STS}
  S_{TS}(f) = v, \quad \textnormal{where} \quad \begin{cases}
                                                - \partial_t v - \Delta v = f \quad & \textnormal{in } Q_T, \\
                                                v = 0 \quad & \textnormal{on } (0,T) \times \partial \Omega, \\
                                                v(T,\cdot) = 0 \quad & \textnormal{in } \Omega.
                                               \end{cases}
 \end{equation}
 Notice that, at least formally, $(u,m)$ is a solution to \eqref{eq_mfg_parabolic_smooth} if and only if
 \[
  \begin{pmatrix} u \\ m \end{pmatrix} = \begin{pmatrix} S_T u_T + S_{TS} \left (F[m] - H(\cdot, Du)\right) \\  S_I m_0 + S_{IS} \left( \diver \left (m H_p(\cdot,Du) \right) \right) \end{pmatrix}.
 \]
 Therefore, setting
 \begin{equation}\label{eq_mfg_parabolic_smooth_def_S}
  \cS_1 := \begin{pmatrix} S_T & 0 \\ 0 & S_I \end{pmatrix}, \quad \cS_2 := \begin{pmatrix} S_{TS} & 0 \\ 0 & S_{IS} \end{pmatrix}
 \end{equation}
 and
 \begin{equation}\label{eq_mfg_parabolic_smooth_def_R}
  \cR_1(v,\rho) := \begin{pmatrix} u_T \\ m_0 \end{pmatrix}, \quad \cR_2(v,\rho) := \begin{pmatrix} F[\rho] - H(\cdot, Dv) \\ \diver \left (\rho H_p(\cdot,Dv) \right) \end{pmatrix},
 \end{equation}
 we may define
\begin{equation}\label{eq_mfg_parabolic_smooth_def_Upsilon}
  \Upsilon(v,\rho) := \left( I - \cS_1 \cR_1 - \cS_2 \cR_2 \right)(v,\rho),
 \end{equation}
 so that $(u,m)$ is a solution to \eqref{eq_mfg_parabolic_smooth} is and only if
 \[
  \Upsilon(u,m) = 0.
 \]

We turn to the question of existence and uniqueness of solutions to \eqref{eq_mfg_parabolic_smooth}. Since the proof is very similar to the standard one, we postpone it to \cref{section:proof_mfg_wp}.
\begin{thm}\label{thm_mfg_wp}
 Assume \ref{h_parabolic_mfg},\ref{h_parabolic_regularity_q} and that $F$ has bounded image in $L^\infty(Q_T)$. Then there exists a weak solution to \eqref{eq_mfg_parabolic_smooth}. Moreover, if the Lasry-Lions condition holds, then this solution is unique.
\end{thm}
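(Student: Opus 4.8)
The plan is to obtain existence through a Schauder fixed point argument and uniqueness through the classical Lasry--Lions duality identity. For existence, I would parametrize by the density: given a nonnegative $m$ in a suitable convex set, first solve the backward Hamilton--Jacobi--Bellman equation with right-hand side $F[m]$ and terminal datum $u_T$, then solve the forward Fokker--Planck equation with drift $-H_p(\cdot,Du)$ and initial datum $m_0$, calling its solution $\Phi(m)$. After the time reversal $t\mapsto T-t$ the HJB equation becomes exactly \eqref{eq_mfg_parabolic_smooth_HJ}, so \cref{thm_mfg_parabolic_HJ_wp} applies once one notes that \ref{h_parabolic_mfg} gives \eqref{eq_mfg_parabolic_smooth_HJ_growth} and, since $H$ is $C^1$ so that Clarke's subdifferential reduces to $\{H_p\}$, also \eqref{eq_mfg_parabolic_smooth_HJ_clarke_growth} through \eqref{eq_mfg_parabolic_smooth_growth}. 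This produces $u=u(m)\in W^{1,2}_{q/2}(Q_T)\cap W^{0,1}_q(Q_T)$.

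Second, I would record the uniform bounds making $\Phi$ a self-map of a compact set. Because $F$ has bounded image in $L^\infty(Q_T)$, the datum $F[m]$ is bounded in $L^{q/2}$ independently of $m$, so \cref{thm_mfg_parabolic_HJ_wp} bounds $\norm{u}{W^{1,2}_{q/2}}+\norm{u}{W^{0,1}_q}$ by a constant depending only on the data; in particular $Du$ is bounded in $L^q(Q_T)$ and hence, by \eqref{eq_mfg_parabolic_smooth_growth}, the drift $-H_p(\cdot,Du)$ is bounded in $L^q\hookrightarrow L^{d+2}(Q_T)$. Then \cref{prop_parabolic_FP_wellposed} gives $\Phi(m)\in\cH^1_2(Q_T)$ with a uniform bound, and since $m_0\in C^\alpha(\Omega)$, the parabolic De Giorgi--Nash--Moser estimate \cref{prop_parabolic_FP_DGNM} (applied with exponent $q>d+2$) yields a uniform bound $\norm{\Phi(m)}{\beta}\le C$ for some $\beta\in(0,\alpha]$. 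The maximum principle gives $\Phi(m)\ge 0$, so the convex set $\cK:=\{\rho\in C^\beta(Q_T):\rho\ge0,\ \norm{\rho}{\beta}\le C\}$, compact in $C(Q_T)$ by Arzelà--Ascoli, satisfies $\Phi(\cK)\subseteq\cK$.

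The third and, on the existence side, main step is the continuity of $\Phi$. If $m_n\to m$ in $C(Q_T)$, then $F[m_n]\to F[m]$ in $L^{q/2}$ by the Lipschitz bound in \ref{h_parabolic_mfg}, and the HJB solutions $u_n$ stay bounded in $W^{1,2}_{q/2}(Q_T)$. Since $q>2(d+2)$ meets the hypotheses of \cref{lem:parabolic_compact_embedding}, a subsequence satisfies $u_{n}\rightharpoonup u$ in $W^{1,2}_{q/2}(Q_T)$ with $Du_{n}\to Du$ strongly in $L^q(Q_T)$. This strong gradient convergence is exactly what lets me pass to the limit in the nonlinear terms $H(\cdot,Du_{n})$ and $H_p(\cdot,Du_{n})$ via continuity of the associated Nemytskii operators, so the limit solves the HJB equation with datum $F[m]$; the comparison principle \cref{prop_comparison_principle} identifies it as $u(m)$ and, the limit being independent of the extraction, promotes this to convergence of the full sequence. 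A final application of \cref{prop_parabolic_FP_wellposed} to the difference of the two Fokker--Planck problems, whose drifts now converge strongly in $L^{d+2}(Q_T)$, gives $\Phi(m_n)\to\Phi(m)$ in $\cH^1_2(Q_T)$, which together with precompactness in $\cK$ yields convergence in $C(Q_T)$. Schauder's theorem then produces $m=\Phi(m)$, and $(u(m),m)$ is the desired weak solution.

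For uniqueness under the Lasry--Lions condition, I would take two solutions $(u_1,m_1)$ and $(u_2,m_2)$ (both $m_i\ge0$ by the maximum principle) and test the difference of the HJB equations against $m_1-m_2$ and the difference of the Fokker--Planck equations against $u_1-u_2$. Integrating over $Q_T$, the second-order terms cancel and the time-boundary terms vanish since $m_1(0)=m_2(0)=m_0$ and $u_1(T)=u_2(T)=u_T$, leaving the identity
\[
\int_{Q_T}(F[m_1]-F[m_2])(m_1-m_2) + \int_{Q_T}\big(m_1\,\delta_H(Du_2,Du_1)+m_2\,\delta_H(Du_1,Du_2)\big)=0,
\]
where $\delta_H(a,b):=H(\cdot,a)-H(\cdot,b)-H_p(\cdot,b)\cdot(a-b)\ge0$ by convexity of $H$. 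Both integrals are nonnegative, hence both vanish. In case (i) strict monotonicity of $F$ forces $m_1=m_2$, whence the $u_i$ solve the same HJB equation and \cref{prop_comparison_principle} gives $u_1=u_2$. Case (ii) is the delicate one: strict convexity forces $Du_1=Du_2$ on $\{m_1>0\}\cup\{m_2>0\}$, so the two Fokker--Planck equations share the same drift on the support of their solutions and have identical initial data, giving $m_1=m_2$ and then $u_1=u_2$ as before.
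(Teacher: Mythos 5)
Your proof is correct, but it takes a noticeably different route from the paper's on the existence side. The paper applies the Leray--Schauder theorem directly to the joint operator $\cS_1\cR_1+\cS_2\cR_2$ acting on the pair $(v,\rho)$ in $X=W^{0,1}_q(Q_T)\times L^\infty(Q_T)$: compactness comes from showing the range lies in $Z=\bigl(W^{1,2}_{q/2}\cap L^\infty\bigr)\times\bigl(\cH^1_2\cap C^{\beta/2,\beta}\bigr)$, which embeds compactly into $X$ via \cref{lem:parabolic_compact_embedding} and Arzel\`a--Ascoli, and the required a priori bound on the homotopy family $(u_\sigma,m_\sigma)=\sigma\,\cS\cR(u_\sigma,m_\sigma)$ reduces to \cref{thm_mfg_parabolic_HJ_wp} and \cref{prop_parabolic_FP_wellposed,prop_parabolic_FP_DGNM}. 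You instead run Schauder on the composed solution map $m\mapsto u(m)\mapsto\Phi(m)$ defined on the density alone, inside a compact convex subset of $C(Q_T)$. The two arguments consume the same estimates; the trade-off is that the paper's version never needs $\Phi$ to be single-valued or continuous (the joint operator is manifestly well defined and its continuity is immediate from the Nemytskii and linear-solver mapping properties), whereas your version must invoke the comparison principle \cref{prop_comparison_principle} both to make $\Phi$ well defined and to identify subsequential limits in the continuity step --- but in exchange you avoid the $\sigma$-homotopy entirely and work in the more concrete space $C(Q_T)$. Your continuity argument is sound: the key point, strong convergence of $Du_n$ in $L^q$ via the compact embedding of \cref{lem:parabolic_compact_embedding}, is exactly what is needed to pass to the limit in $H(\cdot,Du_n)$ and $H_p(\cdot,Du_n)$. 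For uniqueness the paper simply cites the literature, so your detailed Lasry--Lions duality computation (including the correct handling of case (ii), where $m_1$ is shown to solve the linear Fokker--Planck equation with drift $H_p(\cdot,Du_2)$ because $m_1H_p(\cdot,Du_1)=m_1H_p(\cdot,Du_2)$ a.e., and uniqueness from \cref{prop_parabolic_FP_wellposed} closes the argument) is a welcome addition rather than a deviation.
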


\subsection{Stable solutions}

The following definition was first introduced in \cite{BC2018}. Throughout this section, we assume that \ref{h_parabolic_mfg_diff} holds.
\begin{defi}[Stable solutions]
 We say that a weak solution $(u,m)$ to \eqref{eq_mfg_parabolic_smooth} is \emph{stable} if $(v,\rho) = (0,0)$ is the unique weak solution in $\cH^1_2(Q_T) \times C([0,T],L^2(\Omega))$ to the linearized system
 \begin{equation}\label{eq_mfg_parabolic_smooth_linearized}
  \begin{cases}
   - \partial_t v - \Delta v + H_p(x,Du) \cdot Dv = dF[m](\rho) \quad & \textnormal{in } Q_T, \\
   \partial_t \rho - \Delta \rho - \diver \left (\rho H_p(x,Du) \right ) = \diver \left (m H_{pp}(x,Du)Dv \right) \quad & \textnormal{in } Q_T, \\
   v = \rho = 0 \quad & \textnormal{on } (0,T) \times \partial \Omega,\\
   v(T,\cdot) = 0, \quad \rho(0,\cdot) = 0 \quad & \textnormal{in } \Omega,
  \end{cases}
 \end{equation}
 in the sense that
 \begin{multline}
  \int_0^t - \langle \partial_t v(s) \phi(s) \rangle_{H^{-1},H^1} + \int_\Omega Dv(s,x) \cdot D \phi(s,x) + H_p(x,Du(s,x)) \cdot Dv(s,x) \phi(s,x)\, dx ds \\ = \int_0^t \int_{\Omega} dF[m(s)](\rho(s))(x) \phi(s,x)\, dx ds
 \end{multline}
for every $t\in (0,T)$ and $\phi \in C_c^\infty([0,T] \times \Omega)$,
\begin{multline}
 \int_0^t \int_{\Omega} \left( - \partial_t \psi(s,x) - \Delta \psi(s,x) + H_p(x,Du(s,x)) \cdot D \psi(s,x) \right ) \rho(s,x) \, dx ds \\ = - \int_0^t \int_\Omega \left(m(s,x) H_{pp}(x,Du(s,x)) Dv(s,x) \right) \cdot D\psi(s,x) \ dx ds
\end{multline}
for every $t\in (0,T)$ and $\psi \in C_c^\infty((0,T) \times \Omega)$, $v(T,\cdot) = 0$ and $\rho(0,\cdot) = 0$.
\end{defi}

Consider the following spaces
\begin{equation}\label{eq_mfg_parabolic_smooth_spaces}
 \begin{gathered}
 X := W^{0,1}_{q}(Q_T) \times L^q(Q_T) \quad Y_1 := W^{2,\infty}(\Omega) \times L^\infty(\Omega) \\  Y_2 := L^{q/2}(Q_T) \times L^{(q-\eta)/2}(0,T;W^{-1,{(q-\eta)/2}}(\Omega)),
\end{gathered}
\end{equation}
where $q > 2(d+2)$ and $ 0 <\eta < q - 2(d+2)$ is such that $\frac{(q-\eta)(d + (q-\eta))}{d} > q$.
One may verify that with this choice of spaces, the mappings $\cR_1 \colon X \to Y_1$ and $\cR_2 \colon X \to Y_2$ defined in \eqref{eq_mfg_parabolic_smooth_def_R} are well-defined. Moreover, from Propositions \ref{prop_parabolic_FP_wellposed}, \ref{prop_parabolic_FP_DGNM} and \ref{prop_parabolic_W2p}, and using the fact that
\[
 L^{(q-\eta)/2}(0,T;W^{-1,{(q-\eta)/2}}(\Omega)) \hookrightarrow L^2 \left(0,T; H^{-1}(\Omega) \right),
\]
we also have
\begin{gather*}
   S_T \in \cL(W^{2,\infty}(\Omega), W^{1,2}_{q/2}(Q_T)),\, S_I \in \cL(L^\infty(\Omega), \cH^1_2(Q_T) \cap L^\infty(Q_T)), \\
   S_{TS} \in \cL(L^{q/2}(Q_T), W^{1,2}_{q/2}), \, S_{IS} \in \cL(L^{(q-\eta)/2}(0,T;W^{-1,{(q-\eta)/2}}(\Omega)), \cH^1_2(Q_T) \cap L^\infty(Q_T)).
\end{gather*}
It follows that the mapping $\Upsilon \colon X \to X$ defined in \eqref{eq_mfg_parabolic_smooth_def_Upsilon} is well-defined.
Let us recall the following definition.
\begin{defi}[Strict differentiability]
 Let $X$ and $Y$ be Banach spaces and let $F \colon X \to Y$. We say that $F$ is \emph{strictly differentiable}\index{strict differentiability} at $\bar x \in X$ if $F$ is Fréchet differentiable at $\bar x$ and if, for every $\epsilon > 0$, there exsits $\delta > 0$ such that
 \begin{equation}\label{eq:strict_differentiability}
  \norm{F(x) - F(y) - dF[\bar x](x - y)}{Y} \leq \epsilon \norm{x - y}{X} \quad \textnormal{for all } x,y \in  B_X(\bar x, \delta).
 \end{equation}
\end{defi}
It is well-known that if $F \colon X \to Y$ is $C^1$ on a neighborhood of $\bar x \in X$, then it is strictly differentiable at $\bar x$. Notice that if $F$ is strictly differentiable at $\bar x$, then we can define the function $c \colon \RR_+^* \to \RR_+$ by
\[
 c(\delta):= \inf \left \{ \epsilon > 0 \textnormal{ such that \eqref{eq:strict_differentiability} holds} \right\}.
\]
Clearly $c$ is nondecreasing, satisfies $c(0^+) = 0$ and
\[
 \norm{F(x) - F(y) - dF[\bar x](x - y)}{Y} \leq c(\delta) \norm{x - y}{X} \quad \textnormal{for all } x,y \in  B_X(\bar x, \delta).
\]

We now prove the strict differentiability of $\Upsilon$.
\begin{prop}\label{prop_parabolic_mfg_diff}
 Assume \ref{h_parabolic_mfg}, \ref{h_parabolic_regularity_q} and \ref{h_parabolic_mfg_diff} and let $(u,m) \in X$. Then the mapping $\cR_2 \colon X \to Y_2$ is strictly differentiable at $(u,m)$ with
 \begin{equation}\label{eq:diff_R_2}
  d\cR_2[u,m](v,\rho) = \begin{pmatrix}
                         dF[m](\rho) - H_p(\cdot,Du) \cdot Dv \\
                         \diver \left (\rho H_p(\cdot,Du) + m H_{pp}(\cdot,Du)Dv \right )
                        \end{pmatrix}.
 \end{equation}
\end{prop}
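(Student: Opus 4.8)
The plan is to exploit the product structure of $\cR_2$ and reduce the claim to the strict differentiability of each of its two scalar/vector components, using that a map into a product of Banach spaces is strictly differentiable at a point if and only if each coordinate map is. Since a map that is $C^1$ on a neighborhood of a point is strictly differentiable there (as recalled after the definition of strict differentiability), the first component will follow from standard results, whereas the second component—where a naive composition argument fails—will require a direct estimate that is the heart of the proof. For the first component, $(v,\rho) \mapsto F[\rho] - H(\cdot,Dv) \in L^{q/2}(Q_T)$, I would treat the two summands separately. The term $F[\rho]$ is $C^1$ from $L^q(Q_T)$ into $L^{q/2}(Q_T)$ by \ref{h_parabolic_mfg_diff} (taking $p=q$, $r=q/2$), with differential $dF[m]$. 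The term $H(\cdot,Dv)$ is the composition of the bounded linear operator $v \mapsto Dv$ from $W^{0,1}_q(Q_T)$ into $L^q(Q_T;\RR^d)$ with the Nemytskii operator of $H$. Because $p_1 = q > q/2 = p_2$ and the growth bounds $\module{H(x,p)} \leq C_H(1+\module{p}^2)$ and $\module{H_p(x,p)} \leq C_H(1+\module{p})$ match the exponents $p_1/p_2 = 2$ and $p_1/p_2 - 1 = 1$, the results on Nemytskii operators recalled in \cref{section:nemytskii} give that this operator is $C^1$ from $L^q$ into $L^{q/2}$ with differential $\zeta \mapsto H_p(\cdot,Dv)\cdot\zeta$. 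Hence the first component is $C^1$, and thus strictly differentiable, with the differential given by the first line of \eqref{eq:diff_R_2}.

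For the second component I would write it as $\diver \circ \Phi$, where $\Phi(v,\rho) := \rho\, H_p(\cdot,Dv)$ and $\diver$ is the bounded linear operator from $L^{(q-\eta)/2}(Q_T;\RR^d)$ into $L^{(q-\eta)/2}(0,T;W^{-1,(q-\eta)/2}(\Omega))$. It then suffices to prove that $\Phi$ is strictly differentiable at $(u,m)$ into $L^{(q-\eta)/2}(Q_T;\RR^d)$ with $d\Phi[u,m](v,\rho) = \rho\, H_p(\cdot,Du) + m\, H_{pp}(\cdot,Du)Dv$. The main obstacle is that the factor $v \mapsto H_p(\cdot,Dv)$ is a Nemytskii operator at the \emph{equal} exponents $L^q \to L^q$, hence not differentiable in general, so one cannot conclude by mere composition; this is precisely why the target uses the relaxed exponent $(q-\eta)/2$ rather than $q/2$. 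I would therefore estimate the strict-differentiability remainder by hand. Taking $(v_1,\rho_1),(v_2,\rho_2) \in B_X((u,m),\delta)$, forming $\Phi(v_1,\rho_1) - \Phi(v_2,\rho_2)$ and subtracting $d\Phi[u,m](v_1-v_2,\rho_1-\rho_2)$, an algebraic rearrangement isolates three terms: the term $(\rho_1-\rho_2)\big(H_p(\cdot,Dv_1)-H_p(\cdot,Du)\big)$, bounded in $L^{q/2}$ by $\norm{\rho_1-\rho_2}{L^q}$ times a quantity tending to $0$ with $\delta$ by continuity of the Nemytskii operator of $H_p$ on $L^q$; the term $(\rho_2-m)\big(H_p(\cdot,Dv_1)-H_p(\cdot,Dv_2)\big)$, controlled using $\norm{\rho_2-m}{L^q}\leq\delta$ and the global Lipschitz bound $\module{H_p(x,p)-H_p(x,p')}\leq C_H\module{p-p'}$ coming from $\module{H_{pp}}\leq C_H$; and the genuinely delicate term $m\big[H_p(\cdot,Dv_1)-H_p(\cdot,Dv_2)-H_{pp}(\cdot,Du)(Dv_1-Dv_2)\big]$.

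For the last term I would use the fundamental theorem of calculus to write the bracket as $K\,(Dv_1-Dv_2)$ with $K := \int_0^1 \big(H_{pp}(\cdot,Dv_2 + s(Dv_1-Dv_2)) - H_{pp}(\cdot,Du)\big)\,ds$, and then estimate its $L^{(q-\eta)/2}$ norm by Hölder's inequality, placing $m$ and $Dv_1-Dv_2$ in $L^q$ and $K$ in $L^s$ with $\frac{1}{s} = \frac{2}{q-\eta} - \frac{2}{q} > 0$, which is finite precisely because $\eta > 0$. Since the arguments $Dv_2 + s(Dv_1-Dv_2)$ stay within distance $\delta$ of $Du$ in $L^q$ for all $s\in[0,1]$, and since $H_{pp}$ is bounded and continuous so that its Nemytskii operator is continuous from $L^q$ into the finite-exponent space $L^s$, Minkowski's integral inequality together with continuity at $Du$ yields $\norm{K}{L^s} \leq c_3(\delta)$ with $c_3(\delta) \to 0$; this bounds the delicate term by $\norm{m}{L^q}\,c_3(\delta)\,\norm{v_1-v_2}{X}$. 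It is exactly here that the relaxed exponent is indispensable: with $\eta = 0$ one would need $K$ small in $L^\infty$, i.e. continuity of the $H_{pp}$-Nemytskii operator into $L^\infty$, which fails.

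Collecting the three contributions gives the remainder bound $c(\delta)\,\norm{(v_1,\rho_1)-(v_2,\rho_2)}{X}$ with a modulus $c(\delta) \to 0$ as $\delta \to 0^+$, which is the strict-differentiability estimate for $\Phi$; composing with the bounded linear map $\diver$ transfers it to the second component. The candidate differential in \eqref{eq:diff_R_2} is manifestly linear in $(v,\rho)$ and maps $X$ boundedly into $Y_2$ by Hölder's inequality (each product lands in $L^{q/2}\hookrightarrow L^{(q-\eta)/2}$), so specializing the remainder estimate to $(v_2,\rho_2)=(u,m)$ yields Fréchet differentiability with precisely this derivative. Combining the two components then proves that $\cR_2$ is strictly differentiable at $(u,m)$ with the asserted differential, as required. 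The single hard step remains the delicate term of the second component, and the finiteness of $s$ afforded by $\eta>0$ is the device that circumvents the non-differentiability of the $H_p$-Nemytskii operator at equal integrability.
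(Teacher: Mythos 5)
Your proposal is correct and follows essentially the same route as the paper: the first component is handled via the $C^1$-ness of $F$ and of the Nemytskii operator of $H$ from $L^q$ into $L^{q/2}$, and the second component via the identical three-term decomposition, with the relaxed exponent $(q-\eta)/2$ playing exactly the role you identify. The only cosmetic difference is that for the delicate term you re-derive the smallness of the $H_p$-remainder by the fundamental theorem of calculus and continuity of the $H_{pp}$-Nemytskii operator into a finite-exponent space, whereas the paper simply invokes the continuous differentiability of $\mathfrak{H_p}\colon L^q(Q_T;\RR^d)\to L^{q-\eta}(Q_T;\RR^d)$ from \cref{thm:nemytskii_lebesgue}.
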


\begin{proof}
 Let $\epsilon > 0$. From \ref{h_parabolic_mfg_diff} there exists $\delta_1 > 0$ such that
 \[
  \norm{F[m_1] - F[m_2] - dF[m](m_1 - m_2)}{L^{q/2}} \leq \frac{\epsilon}{5} \norm{m_1 - m_2}{L^q}
 \]
 for every $m_1,\, m_2 \in B_{L^q(Q_T)}(m, \delta_1)$. From \ref{h_parabolic_mfg} and \cref{thm:nemytskii_lebesgue}, we know that the Nemytskii operator
 \begin{equation}\label{eq_mfg_parabolic_smooth_nemytskii_H}
  \mathfrak{H} \colon \left \{ \begin{array}{l}
                                 L^q(Q_T;\RR^d) \to L^{q/2}(Q_T) \\
                                 w \mapsto H(x,w(t,x))
                               \end{array} \right.
 \end{equation}
 is continuously differentiable. In particular there exists $\delta_2 > 0$ such that
 \[
  \norm{\mathfrak{H}[Du_1] - \mathfrak{H}[Du_2] - \mathfrak{H_p}[Du]\cdot(Du_1 - Du_2)}{L^{q/2}} \leq \frac{\epsilon}{5} \norm{Du_1 - Du_2}{L^q}
 \]
 for every $u_1, u_2 \in B_{W^{0,1}_q(Q_T)}(u,\delta_2)$, where
\begin{equation}\label{eq_mfg_parabolic_smooth_nemytskii_Hp}
  \mathfrak{H_p} \colon \left \{ \begin{array}{l}
                                 L^q(Q_T;\RR^d) \to L^\infty(Q_T) \\
                                 w \mapsto H_p(x,w(t,x))
                               \end{array} \right. .
 \end{equation}
 Define the Nemytskii operator
 \begin{equation}\label{eq_mfg_parabolic_smooth_nemytskii_Hpp}
  \mathfrak{H_{pp}} \colon \left \{ \begin{array}{l}
                                 L^q(Q_T;\RR^d) \to L^\infty(Q_T;\RR^{d \times d}) \\
                                 w \mapsto H_{pp}(x,w(t,x))
                               \end{array} \right. .
 \end{equation}
 Then,
 \begin{align*}
  & \norm{m_1 \mathfrak{H_p}[Du_1] - m_2 \mathfrak{H_p}[Du_2] - (m_1 - m_2) \mathfrak{H_p}[Du] - m \mathfrak{H_{pp}}[Du](Du_1 - Du_2)}{L^{(q-\eta)/2}} \\
  & \quad \leq \norm{(m_1 - m_2) \left (\mathfrak{H_p}[Du_1] - \mathfrak{H_p}[Du] \right) }{L^{(q-\eta)/2}} + \norm{(m_2 - m) \left ( \mathfrak{H_p}[Du_1] - \mathfrak{H_p}[Du_2] \right)}{L^{(q - \eta)/2}} \\
  & \qquad + \norm{m \left( \mathfrak{H_p}[Du_1] - \mathfrak{H_p}[Du_2] - \mathfrak{H_{pp}}[Du](Du_1 - Du_2)\right)}{L^{(q-\eta)/2}}.
 \end{align*}
Moreover, there exists $\delta_3,\, \delta_4 > 0$ such that
\begin{align*}
 \norm{(m_1 - m_2) \left (\mathfrak{H_p}[Du_1] - \mathfrak{H_p}[Du] \right) }{L^{(q-\eta)/2}} & \leq C_\eta \norm{m_1 - m_2}{L^q} \norm{\mathfrak{H_p}[Du_1] - \mathfrak{H_p}[Du]}{L^q} \\
 & \leq C_\eta C_H \norm{m_1 - m_2}{L^q} \norm{Du_1 - Du_2}{L^q} \\
 & \leq \frac{\epsilon}{5} \norm{m_1 - m_2}{L^q},
\end{align*}
\begin{align*}
\norm{(m_2 - m) \left ( \mathfrak{H_p}[Du_1] - \mathfrak{H_p}[Du_2] \right)}{L^{(q - \eta)/2}} & \leq C_\eta C_H \norm{m_2 - m}{L^q} \norm{Du_1 - Du_2}{L^q} \\
& \leq \frac{\epsilon}{5} \norm{Du_1 - Du_2}{L^q} ,
\end{align*}
and
\begin{multline*}
 \norm{m \left( \mathfrak{H_p}[Du_1] - \mathfrak{H_p}[Du_2] - \mathfrak{H_{pp}}[Du](Du_1 - Du_2)\right)}{L^{(q-\eta)/2}} \\
  \leq C_\eta \norm{m}{L^q} \norm{\mathfrak{H_p}[Du_1] - \mathfrak{H_p}[Du_2] - \mathfrak{H_{pp}}[Du](Du_1 - Du_2)}{L^{q - \eta}}
\end{multline*}
for $u_1,\, u_2 \in B_{W^{0,1}_{q}}(u,\delta_3)$ and $m_1,\, m_2 \in B_{L^q}(m,\delta_4)$. Without loss of generality, we may assume that $m \neq 0$. It then also follows from \cref{thm:nemytskii_lebesgue} that $\mathfrak{H_p} \colon L^q(Q_T;\RR^d) \to L^{q - \eta}(Q_T;\RR^d)$ is continuously differentiable and there exists $\delta_5 > 0$ such that
 \[
  \norm{\mathfrak{H_p}[Du_1] - \mathfrak{H_p}[Du_2] - \mathfrak{H_{pp}}[Du](Du_1 - Du_2)}{L^{q - \eta}} \leq \frac{\epsilon}{5 C_\eta \norm{m}{L^q}}\norm{Du_1 - Du_2}{L^q}
 \]
 for every $u_1, u_2 \in B_{W^{0,1}_q(Q_T)}(u,\delta_5)$. In conclusion, we have
 \begin{align*}
  \norm{\cR_2(u_1,m_1) - \cR_2(u_2,m_2) - d\cR_2[u,m](u_1 - u_2, m_1 - m_2)}{Y_2} \leq \epsilon \norm{(u_1 - u_2, m_1 - m_2)}{X}
 \end{align*}
 for every $(u_i,m_i) \in B_{X}((u,m),\delta)$, where $\delta = \min \left \{ \delta_i :\, 1 \leq i \leq 5 \right \}$ and
 \begin{equation}\label{eq:diff_R}
  d\cR_2[u,m](v,\rho) = \begin{pmatrix}
                         dF[m](\rho) - \mathfrak{H_p}[Du] \cdot Dv \\
                         \diver \left (\rho \mathfrak{H_p}[Du] + m \mathfrak{H_{pp}}[Du]Dv \right )
                        \end{pmatrix}.
 \end{equation}
 This concludes the proof.
 \end{proof}

 We now prove that the stability of a solution to the MFG system is related to the invertibility of the differential of $\Upsilon$.
 \begin{thm}[Isomorphism property]\label{thm_mfg_parabolic_isom}
   Assume \ref{h_parabolic_mfg}, \ref{h_parabolic_regularity_q} and \ref{h_parabolic_mfg_diff} and let $(u,m) \in X$ be a stable solution to \eqref{eq_mfg_parabolic_smooth}. Then $\Upsilon$ is strictly differentiable at $(u,m)$ and $d\Upsilon[u,m]$ is an isomorphism on $X$, with
  \[
    d\Upsilon[u,m](v,\rho) = \left (I - \cS_2 \circ d\cR_2[u,m] \right)(v,\rho).
  \]
 \end{thm}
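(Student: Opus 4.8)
The plan is to treat the two assertions of \cref{thm_mfg_parabolic_isom} separately. The strict differentiability of $\Upsilon$ and the formula for its differential follow at once from the structure $\Upsilon = I - \cS_1\cR_1 - \cS_2\cR_2$ in \eqref{eq_mfg_parabolic_smooth_def_Upsilon}: the identity is bounded and linear, hence strictly differentiable with differential $I$; the map $\cR_1$ of \eqref{eq_mfg_parabolic_smooth_def_R} is constant, so $\cS_1\cR_1$ contributes nothing to the differential; and since $\cS_2 \in \cL(Y_2,X)$ is bounded and linear while $\cR_2$ is strictly differentiable at $(u,m)$ by \cref{prop_parabolic_mfg_diff}, the composition $\cS_2\cR_2$ is strictly differentiable at $(u,m)$ with differential $\cS_2 \circ d\cR_2[u,m]$. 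A finite sum of strictly differentiable maps being strictly differentiable, $\Upsilon$ is strictly differentiable at $(u,m)$ with
\[
 d\Upsilon[u,m] = I - \cS_2 \circ d\cR_2[u,m].
\]
The remaining, and harder, task is to prove that this operator is an isomorphism of $X$, which I would obtain from the Fredholm alternative.

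Set $K := \cS_2 \circ d\cR_2[u,m]$. I would first identify $\ker(I-K)$ with the solution set of the linearized system. Unfolding \eqref{eq:diff_R_2} together with the definitions \eqref{eq_mfg_parabolic_smooth_STS} and \eqref{eq_mfg_parabolic_smooth_SIS} of $S_{TS}$ and $S_{IS}$, the equation $(I-K)(v,\rho) = 0$ is equivalent to the pair $v = S_{TS}(dF[m](\rho) - H_p(\cdot,Du)\cdot Dv)$ and $\rho = S_{IS}(\diver(\rho H_p(\cdot,Du) + m H_{pp}(\cdot,Du)Dv))$, which is exactly the weak formulation of \eqref{eq_mfg_parabolic_smooth_linearized}. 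The mapping properties of $S_{TS}$ and $S_{IS}$ recalled before the statement moreover show that any such $(v,\rho)$ satisfies $v \in W^{1,2}_{q/2}(Q_T)$ and $\rho \in \cH^1_2(Q_T) \cap L^\infty(Q_T)$, hence $(v,\rho) \in \cH^1_2(Q_T) \times C([0,T],L^2(\Omega))$. Thus $\ker(I-K)$ is precisely the solution set of \eqref{eq_mfg_parabolic_smooth_linearized} in the space appearing in the definition of stability, and the stability of $(u,m)$ forces $\ker(I-K) = \{0\}$, i.e. $d\Upsilon[u,m]$ is injective.

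The crux, and the step I expect to be the main obstacle, is the compactness of $K$ on $X$; once it is established, the Fredholm alternative upgrades the injectivity of $I-K$ to invertibility. I would argue componentwise. In the first component the datum $dF[m](\rho) - H_p(\cdot,Du)\cdot Dv$ belongs to $L^{q/2}(Q_T)$ and depends boundedly and linearly on $(v,\rho) \in X$, using $dF[m] \in \cL(L^q,L^{q/2})$ from \ref{h_parabolic_mfg_diff}, the bound \eqref{eq_mfg_parabolic_smooth_growth} giving $H_p(\cdot,Du) \in L^q(Q_T)$, and Hölder's inequality; then $S_{TS}$ maps into $W^{1,2}_{q/2}(Q_T)$, which embeds compactly into $W^{0,1}_q(Q_T)$ by \cref{lem:parabolic_compact_embedding}, whose hypotheses $q > d+3$ and $q \geq 2d$ are implied by $q > 2(d+2)$. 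In the second component the vector field $\rho H_p(\cdot,Du) + m H_{pp}(\cdot,Du)Dv$ is bounded in $L^{q/2}(Q_T)$, so applying $S_{IS}$ amounts to solving a heat equation with zero drift, zero initial datum, and a divergence-form source whose potential lies in $L^{q/2}(Q_T)$ with $q/2 > d+2$; by the De Giorgi--Nash--Moser estimate \cref{prop_parabolic_FP_DGNM} (with $\rho_0 = 0 \in C^\alpha$) the image is bounded in a parabolic Hölder space $C^{\beta/2,\beta}(Q_T)$, which embeds compactly into $L^q(Q_T)$ by the Arzelà--Ascoli theorem. Hence $K$ sends bounded subsets of $X$ to relatively compact ones, so $K$ is compact. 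Since $d\Upsilon[u,m] = I - K$ is injective with $K$ compact, the Riesz--Schauder theory shows that $d\Upsilon[u,m]$ is an isomorphism of $X$, which concludes the proof.
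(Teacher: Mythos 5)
Your proposal is correct and follows essentially the same route as the paper: strict differentiability from \cref{prop_parabolic_mfg_diff} and the linearity of $\cS_2$, injectivity of $I - \cS_2\circ d\cR_2[u,m]$ by identifying its kernel with the solution set of the linearized system \eqref{eq_mfg_parabolic_smooth_linearized} and invoking stability, and surjectivity via the Fredholm alternative after establishing compactness componentwise through \cref{lem:parabolic_compact_embedding} for the $S_{TS}$ part and \cref{prop_parabolic_FP_DGNM} plus Arzel\`a--Ascoli for the $S_{IS}$ part. Your write-up in fact spells out a few verifications (the hypotheses of \cref{lem:parabolic_compact_embedding}, the integrability of the data fed into $\cS_2$) that the paper leaves implicit, but the argument is the same.
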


 \begin{proof}
   Recall that $d\cR_2[u,m] \in \cL(X,Y_2)$. Moreover, we have
   \[
    \cS_2 \in \cL \left (Y_2, W^{1,2}_{q/2}(Q_T) \times \left(L^\infty(Q_T) \cap \cH^1_2(Q_T) \right) \right).
   \]
   The strict differentiability of $\Upsilon$ then directly follows from \cref{prop_parabolic_mfg_diff}.

   We now prove the invertibility of $d\Upsilon[u,m]$ when $(u,m)$ is a stable solution.
   Assume that $(v,\rho) \in \ker_{X} \left( d \Upsilon[u,m] \right)$. In particular
   \[
    (v,\rho) =  \cS_2 \circ d\cR_2[u,m](v,\rho),
   \]
   and therefore $(v,\rho) \in W^{1,2}_{q/2}(Q_T) \times \cH^1_2(Q_T) \hookrightarrow \cH^1_2(Q_T) \times C([0,T],L^2(\Omega))$. Moreover $(v,\rho)$ is a weak solution to \eqref{eq_mfg_parabolic_smooth_linearized}. Since we assume that $(u,m)$ is a stable solution to \eqref{eq_mfg_parabolic_smooth}, we conclude that $(v,\rho) = (0,0)$. This proves the injectivity of $d\Upsilon[u,m]$ in $X$. We now prove its surjectivity. From \cref{lem:parabolic_compact_embedding} we know that the embedding
   \[
     W^{1,2}_{q/2}(Q_T)  \hookrightarrow W^{0,1}_q(Q_T)
   \]
   is compact. Moreover, from \cref{prop_parabolic_FP_DGNM} and the Arzela-Ascoli theorem, we deduce that the range of $S_{IS}$ is contained in a compact subset of $L^{q}(Q_T)$. It follows that the linear operator $\cS_2 \circ d\cR_2[u,m]$ is compact as an element of $\cL(X)$. The surjectivity of $d\Upsilon[u,m]$ then follows from Fredholm's alternative \cite[Theorem 6.6]{B2011}.
 \end{proof}

\section{Semi-discrete error estimates for finite element approximations of stable solutions}
\label{section:parabolic_FEM}
We fix $d \leq 3$, we assume the boundary of $\Omega$ is polygonal if $d=2$ and polyhedral, with $\Omega$ convex. Recall from \cite{G1975,G1985} that we have
\begin{equation}
 \norm{u}{H^2} \leq C \norm{\Delta u}{L^2}.
\end{equation}
for some $C > 0$. For $h >0$, let $\mathcal{T}_h$ be a quasi-uniform triangulations of $\Omega$ (see \cite[Definition 4.4.13]{BS2008}), where $h = \max_{K \in \cT_h} \diam(K)$. Let also $V_h \subset W^{1,\infty}_0(\Omega)$ be the associated finite element space induced by $\PP^1$-Lagrange finite elements. We denote by $P_h \in \cL(L^2(\Omega))$ the \emph{$L^2$ projection} onto $V_h$, i.e.
\[
 P_h u := \argmin_{u_h \in V_h} \norm{u - u_h}{L^2}.
\]

We look for a semidiscrete approximation of a weak solution $(u,m)$ to \eqref{eq_mfg_parabolic_smooth}. An element $(u_h,m_h) \in H^1(0,T;V_h) \times H^1(0,T;V_h)$ is a solution to the semidiscrete MFG system if
\begin{multline}
 \int_0^t \int_\Omega - \partial_t u_h(s,x) \phi_h(s,x) + Du_h(s,x) \cdot D\phi_h(s,x) + H(x,Du_h(s,x)) \phi_h(s,x) \, dx ds \\ = \int_0^t \int_\Omega F[m_h(s)](x) \phi_h(s,x) \, dx ds
\end{multline}
for all $t \in (0,T)$ and $\phi_h \in L^2(0,T;V_h)$,
\begin{multline}
 \int_0^t \int_\Omega \partial_t m_h(s,x) \psi_h(s,x) + Dm_h(s,x) \cdot D\psi_h(s,x)\, dx ds \\ + \int_0^t \int_\Omega m_h(s,x) H_p(x,Du_h(s,x)) \cdot D\psi_h(s,x) \, dx ds = 0
\end{multline}
for all $t \in (0,T)$ and $\psi_h \in L^2(0,T;V_h)$, and if $u_h(T,\cdot) = P_h u_T$ and $m_h(0,\cdot) = P_h m_0$.

We start this section by recalling some facts about finite element approximations. We then prove some estimates on the linear semidiscrete problems and conclude with our result on semidiscrete approximations of stable solutions to \eqref{eq_mfg_parabolic_smooth}.

\subsection{Some results on finite element approximations}
We recall the following fact.
\begin{prop}\label{prop_parabolic_mfg_proj}
   For every $p \geq 2$, there exists $C_p > 0$ such that
   \[
    \norm{P_h u}{L^p} \leq C_p \norm{u}{L^p} \quad \textnormal{for all } u \in L^p(\Omega).
   \]
   Moreover,
   \[
    \norm{u - P_h u}{L^2} \leq C h \norm{u}{H^1} \quad \textnormal{for all } u \in H^1(\Omega).
   \]
\end{prop}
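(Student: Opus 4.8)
The two assertions are essentially independent, so I would prove them separately. The structural facts I would rely on are that $P_h$ is the $L^2(\Omega)$-orthogonal projection onto $V_h$, so that $\norm{P_h}{\cL(L^2)} = 1$, and that $P_h u$ is, by definition, the $L^2$-best approximation of $u$ from $V_h$.

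For the second estimate I would use this best-approximation property directly. Since $\norm{u - P_h u}{L^2} = \min_{v_h \in V_h} \norm{u - v_h}{L^2}$, it suffices to exhibit a single competitor $v_h \in V_h$ with $\norm{u - v_h}{L^2} \leq Ch\norm{u}{H^1}$. Taking $v_h = \Pi_h u$ for a quasi-interpolation operator $\Pi_h \colon H^1(\Omega) \to V_h$ of Scott--Zhang or Clément type, the standard first-order approximation estimate $\norm{u - \Pi_h u}{L^2} \leq Ch\norm{u}{H^1}$ (see e.g. \cite{BS2008}) yields the claim. This step requires only the shape-regularity of the family $\cT_h$, not its quasi-uniformity.

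For the $L^p$ stability I would interpolate between $L^2$ and $L^\infty$. The endpoint $p = 2$ is immediate from $\norm{P_h}{\cL(L^2)} = 1$. The nontrivial ingredient is the uniform bound $\norm{P_h u}{L^\infty} \leq C\norm{u}{L^\infty}$ for all $u \in L^\infty(\Omega)$, with $C$ independent of $h$, which holds precisely because $\cT_h$ is quasi-uniform (this is the classical $L^\infty$-stability of the $L^2$-projection established by Douglas--Dupont--Wahlbin and Crouzeix--Thomée). Granting it, the Riesz--Thorin interpolation theorem applied to $P_h$ on the pair $(L^2(\Omega), L^\infty(\Omega))$ gives, for $\theta \in [0,1)$ and $p = 2/(1-\theta) \in [2,\infty)$,
\[
 \norm{P_h}{\cL(L^p)} \leq \norm{P_h}{\cL(L^2)}^{1-\theta}\,\norm{P_h}{\cL(L^\infty)}^{\theta} \leq \max\{1,C\},
\]
which is the desired bound with $C_p$ independent of $h$; the value $p = \infty$ is covered by the $L^\infty$ bound itself.

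The main obstacle is precisely this $L^\infty$-stability. A self-contained proof is delicate: one localizes $P_h$ using the fact that $(P_h u)(x)$ depends essentially only on the values of $u$ near $x$, controls the nonlocal tail by a weighted-norm (dyadic) argument, and closes the estimate using the global inverse inequality $\norm{v_h}{L^\infty} \leq Ch^{-d/2}\norm{v_h}{L^2}$, in which the quasi-uniformity of the mesh is essential. Since the result is classical I would simply invoke it rather than reproduce this argument.
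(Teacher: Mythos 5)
Your proposal is correct and follows essentially the same route as the paper, which simply cites the classical references (Ern--Guermond and Thom\'ee for the $L^p$-stability of the $L^2$-projection on quasi-uniform meshes, and the standard first-order approximation estimate for the error bound). Your identification of the $L^\infty$-stability of $P_h$ as the only nontrivial ingredient, with quasi-uniformity as the essential hypothesis, and the reduction of the $L^p$ case to it by Riesz--Thorin interpolation, is exactly the content of the cited results.
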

\begin{proof}
 The first inequality follows from \cite[Lemme 1.131]{EG2004} and \cite[Lemma 6.1]{T2006} and the second one in \cite[Proposition 1.134]{EG2004}.
\end{proof}

The \emph{Ritz projection}\index{Ritz projection} on $V_h$ is defined by $R_h u = u_h$, where $u_h$ is the unique solution in $V_h$ to
\[
 \int_{\Omega} Du_h \cdot Dv_h \, dx = \int_{\Omega} Du \cdot D v_h \quad \textnormal{for all } v_h \in V_h.
\]
Note that $R_h$ is well defined according to the Lax-Milgram theorem. We also recall the following result.
\begin{prop}(Stability of the Ritz projection)\label{prop_parabolic_mfg_ritz}
 Assume \ref{h_parabolic_regularity_q}. Then there exists $h_0 > 0$ such that, for every $0 < h < h_0$ and $2 \leq p \leq \infty$, we have
 \[
  \norm{R_h u}{W^{1,p}} \leq C \norm{u}{W^{1,p}} \quad \textnormal{for all } u \in W^{1,p}_0(\Omega).
 \]
 In addition, we also have
 \begin{equation}\label{eq:ritz_max_norm}
  \norm{R_h u}{L^\infty} \leq C \module{\ln \left( h \right)} \norm{u}{L^\infty} \quad \textnormal{for all } u \in H^2(\Omega) \cap H^1_0(\Omega).
 \end{equation}
\end{prop}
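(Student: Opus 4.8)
The plan is to establish the two stated bounds separately, since they rely on different techniques and different hypotheses. For the $W^{1,p}$ stability bound, the standard approach is to combine an $L^\infty$-type estimate for the gradient of the Ritz projection with an interpolation argument. First I would recall that under assumption \ref{h_parabolic_regularity_q}, the domain enjoys full elliptic $W^{2,p}$ regularity for the Dirichlet Laplacian for $1 < p \le q/2$, which is precisely what is needed to invoke the known $W^{1,\infty}$-stability results for the Ritz projection on quasi-uniform meshes. The key external input here is the result of \cite{LV2016} (and the references therein on $W^{1,\infty}$ stability of finite element projections), which gives $\norm{R_h u}{W^{1,\infty}} \le C \norm{u}{W^{1,\infty}}$ for $u \in W^{1,\infty}_0(\Omega)$, valid for $h$ small enough. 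Combined with the elementary $H^1$-stability $\norm{R_h u}{H^1} \le \norm{u}{H^1}$ coming directly from the definition of $R_h$ and the Cauchy--Schwarz inequality, the intermediate cases $2 \le p < \infty$ would follow by Riesz--Thorin interpolation between the $p=2$ and $p=\infty$ endpoints.

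For the maximum-norm bound \eqref{eq:ritz_max_norm}, the plan is to exploit the known logarithmically-weighted $L^\infty$ stability of the Ritz projection. The classical estimate (see \cite{LV2016} and the earlier literature on maximum-norm finite element estimates) states that on quasi-uniform meshes one has $\norm{R_h u}{L^\infty} \le C \module{\ln h} \norm{u}{L^\infty}$ provided $u$ is sufficiently regular for pointwise values to make sense; here the hypothesis $u \in H^2(\Omega) \cap H^1_0(\Omega)$ together with $d \le 3$ guarantees, via Sobolev embedding, that $u$ is continuous, so the right-hand side is finite and the estimate is meaningful. I would therefore phrase this part as a direct citation of the relevant maximum-norm stability theorem, checking only that the geometric assumptions (convex polygonal/polyhedral domain, quasi-uniform triangulation) match the hypotheses of the cited result.

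The main obstacle, and the part requiring the most care, is ensuring that the cited stability results genuinely apply under the nonsmooth-domain setting of assumption \ref{h_parabolic_regularity_q} rather than the smoother domains often assumed in the classical literature. The logarithmic factor in \eqref{eq:ritz_max_norm} is unavoidable precisely because one works at the borderline of $W^{1,\infty}$ regularity on such domains, and the constant $C$ must be tracked to be independent of $h$ (uniform over $0 < h < h_0$). I would verify that the quasi-uniformity of $\cT_h$ supplies the inverse inequalities needed for these arguments and that the convexity assumption yields the $H^2$-regularity $\norm{u}{H^2} \le C \norm{\Delta u}{L^2}$ already recorded at the start of Section~\ref{section:parabolic_FEM}, which underpins the maximum-norm analysis. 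Since both bounds are essentially restatements of established finite element stability theory, the proof itself should be short, consisting mainly of correct citations and a brief interpolation argument, with the real content being the verification that the hypotheses are met.
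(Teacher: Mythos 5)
Your proposal matches the paper's proof: both establish $H^1$-stability from the definition of $R_h$ (the paper invokes Lax--Milgram, you invoke Cauchy--Schwarz, which is the same orthogonality argument), cite the known $W^{1,\infty}$-stability on quasi-uniform meshes for $h$ small, interpolate to get $2 \le p \le \infty$, and quote \cite{LV2016} for the logarithmic maximum-norm bound. The only cosmetic difference is the citation for the $W^{1,\infty}$ endpoint (the paper uses \cite[Theorem 8.1.11]{BS2008}); the substance is identical.
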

\begin{proof}
 First, it follows from the Lax-Milgram theorem that
 \[
  \norm{R_h u}{H^1} \leq C \norm{u}{H^1} \quad \textnormal{for all } u \in H^1_0(\Omega).
 \]
 In addition, from \cite[Theorem 8.1.11]{BS2008}, there exists $h_0>0$ such that we also have
 \[
  \norm{R_h u}{W^{1,\infty}} \leq C \norm{u}{W^{1,\infty}} \quad \textnormal{for all } u \in W^{1,\infty}_0(\Omega)
 \]
 for every $0< h < h_0$. The first inequality then follows by interpolation. The second inequality is proved in \cite[Theorem 12]{LV2016}.
\end{proof}

Let $V$ and $W$ be Banach spaces and $A \in \cL(V,W)$. We may then consider the extension $A \in \cL \left( L^p(0,T;V), L^p(0,T;W) \right)$ by setting $\left(Av \right)(t) = A\left(v(t) \right)$ for $t\in (0,T)$. This fact will be used repeatedly below.

\subsection{Estimates on linear semidiscrete problems}

For $v_0 \in L^2(\Omega)$ and $f \in L^2(0,T;V_h')$ we consider the problem: find $v_h \in H^1(0,T;V_h)$ such that $v_h(0) = v_0^h \in V_h$ and
\begin{equation}\label{eq_mfg_parabolic_smooth_semidiscrete}
 \int_0^t \int_{\Omega} \partial_t v_h(s,x) \phi_h(s,x) +  D v_h(s,x) \cdot D \phi_h(s,x) \, dx ds = \int_0^t \langle f(s), \phi_h \rangle_{H^{-1},H^1} \, ds
\end{equation}
for all $t \in (0,T)$ and $\phi_h \in L^2(0,T;V_h)$.
Existence and uniqueness of a solution to problem \eqref{eq_mfg_parabolic_smooth_semidiscrete} may be proved by using the Cauchy-Lipschitz theory, see \cite[Proposition 66.2]{EG2021} for instance.
We have the following $L^2$ error estimates for solutions to \eqref{eq_mfg_parabolic_smooth_semidiscrete}.
\begin{thm}[{\cite[Theorems 3.2, 3.3 and 3.5]{CH2002}}]\label{thm_mfg_parabolic_semidiscrete_heat_error_2}
 Let $f \in L^2(0,T;H^{-1}(\Omega))$, $v_0 \in L^2(\Omega)$ and $v_0^h = P_h v_0$. Let $v \in \cH^1_2(Q_T) $ be the unique weak solution to
 \begin{equation}\label{eq_mfg_parabolic_smooth_heat2}
  \begin{cases}
   \partial_t v - \Delta v = f \quad & \textnormal{in } Q_T, \\
   v = 0 \quad & \textnormal{on } (0,T) \times \partial \Omega, \\
   v(0, \cdot) = v_0 \quad & \textnormal{on } \Omega
  \end{cases}
 \end{equation}
  and $v_h$ be the unique solution to \eqref{eq_mfg_parabolic_smooth_semidiscrete}. Then
  \begin{align*}
    \norm{v - v_h}{L^2} & \leq Ch \left( \norm{f}{L^2(H^{-1})} + \norm{v_0}{L^2} \right), \\
    \norm{v - v_h}{\cH^1_2} & \leq C h \left( \norm{f}{L^2} + \norm{v_0}{H^1} \right).
  \end{align*}
\end{thm}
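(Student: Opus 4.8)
My plan is to carry out the classical semidiscrete error analysis of Thomée type, adapted to the low regularity of the data: an energy argument for the $\cH^1_2$ bound and a duality (Aubin--Nitsche) argument for the $L^2$ bound. The whole analysis rests on the elliptic regularity $\norm{u}{H^2}\leq C\norm{\Delta u}{L^2}$ available on the convex polyhedral $\Omega$ (recalled at the beginning of this section) and, through it, on the maximal $L^2$ parabolic regularity of \cref{prop_parabolic_W2p} with $p=2$. Thus, for $f\in L^2(Q_T)$ and $v_0\in H^1_0(\Omega)$ the solution $v$ of \eqref{eq_mfg_parabolic_smooth_heat2} lies in $W^{1,2}_2(Q_T)$ with $\norm{v}{W^{1,2}_2}\leq C(\norm{f}{L^2}+\norm{v_0}{H^1})$, while for $f\in L^2(0,T;H^{-1})$ and $v_0\in L^2(\Omega)$ the solution is only in $\cH^1_2(Q_T)$ with the corresponding bound.

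For the $\cH^1_2$ estimate I would split the error using the $L^2$ projection, $v-v_h=(v-P_hv)+(P_hv-v_h)=:\eta+\theta$, where $\theta\in H^1(0,T;V_h)$ and, crucially, $\theta(0)=P_hv_0-v_h(0)=0$ since $v_h(0)=P_hv_0$. Subtracting the discrete formulation \eqref{eq_mfg_parabolic_smooth_semidiscrete} from \eqref{eq_mfg_parabolic_smooth_heat2} tested against $\chi\in V_h$ gives the error equation
\[
 (\partial_t\theta,\chi)+(D\theta,D\chi)=-(\partial_t\eta,\chi)-(D\eta,D\chi),\qquad \chi\in V_h.
\]
The $L^2$-orthogonality of $P_h$ makes $(\partial_t\eta,\chi)=((I-P_h)\partial_t v,\chi)=0$, so the troublesome time-derivative term disappears. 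Choosing $\chi=\theta$, integrating in time and using $\theta(0)=0$ together with Young's inequality yields $\norm{\theta}{L^\infty(L^2)}^2+\norm{D\theta}{L^2}^2\leq C\norm{D\eta}{L^2}^2$. Since $\norm{D\eta}{L^2}\leq Ch\norm{v}{L^2(H^2)}$ (the $H^1$ approximation property of $P_h$ on the quasi-uniform mesh) and $\norm{\eta}{L^2}\leq Ch\norm{v}{L^2(H^1)}$ by \cref{prop_parabolic_mfg_proj}, this controls the $L^2$ and $H^1$-in-space parts of the error; the $\partial_t$ part in $L^2(H^{-1})$ follows from testing the error equation against $w\in H^1_0$ via $P_hw$, using $\norm{(I-P_h)\partial_t v}{L^2(H^{-1})}\leq Ch\norm{\partial_t v}{L^2}$ and the $H^1$-stability of $P_h$.

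For the $L^2$ estimate, where the data are rough and a direct energy argument is unavailable, I would argue by duality. For $g\in L^2(Q_T)$ let $z$ solve the backward adjoint heat equation $-\partial_t z-\Delta z=g$, $z(T,\cdot)=0$, with homogeneous Dirichlet data; by $H^2$ elliptic regularity and \cref{prop_parabolic_W2p} one has $z\in W^{1,2}_2(Q_T)$ with $\norm{z}{W^{1,2}_2}\leq C\norm{g}{L^2}$. Writing $e=v-v_h$ and integrating $\int_{Q_T}e\,g=\int_0^T(e,-\partial_t z-\Delta z)$ by parts in space and time produces the boundary term $(v_0-P_hv_0,z(0))$ together with $\int_0^T\langle\partial_t e,z\rangle+(De,Dz)$. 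Subtracting the Galerkin orthogonality relation (the error equation tested against $R_hz\in V_h$) replaces $z$ by the Ritz error $z-R_hz$ in the last integral, and the $L^2$-orthogonality of $P_h$ replaces $z(0)$ by $z(0)-P_hz(0)$ in the boundary term. Each of these now carries a factor $h$ from the Ritz and $L^2$-projection error bounds, so that $\int_{Q_T}e\,g\leq Ch(\norm{v}{\cH^1_2}+\norm{v_0}{L^2})\norm{g}{L^2}$; taking the supremum over $\norm{g}{L^2}=1$ gives the first inequality.

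The main obstacle is precisely the interaction between the time derivative and the low regularity of the data. A naive Ritz-projection energy argument stumbles because the Ritz projection does not commute with $\partial_t$ when $\partial_t v$ is merely $H^{-1}$-valued, and because $R_hv_0$ is not even defined for $v_0\in L^2$; this is what forces the rough-data estimate through duality and the smooth-data estimate through the $L^2$-projection splitting, where the offending term vanishes by orthogonality. Both estimates ultimately depend on the $H^2$ elliptic regularity granted by the convexity of $\Omega$ and on the quasi-uniformity of the mesh, and the careful bookkeeping of these points is the content of the cited \cite{CH2002}.
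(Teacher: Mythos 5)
The paper offers no proof of this statement: it is imported verbatim from \cite{CH2002}, so there is no internal argument to compare against and your reconstruction has to be judged on its own. It follows the standard Thomée-type route and is essentially sound. The $\cH^1_2$ estimate via the $L^2$-projection splitting is correct: $\theta(0)=0$ because $v_h(0)=P_hv_0$, the $L^2$-orthogonality removes $(\partial_t\eta,\chi)$, and the energy identity combined with $\norm{D(v-P_hv)}{L^2}\leq Ch\norm{v}{H^2}$ (which uses the $H^1$-stability of $P_h$ on the quasi-uniform mesh) and the maximal regularity bound $\norm{v}{L^2(H^2)}+\norm{\partial_t v}{L^2}\leq C(\norm{f}{L^2}+\norm{v_0}{H^1})$ on the convex domain controls all three components of the norm; your treatment of the $\partial_t$ component is also right, since $(\partial_t v_h, w-P_hw)=0$ kills the discrete time derivative there.

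The one place that needs more care is the duality argument for the rough-data $L^2$ bound. After inserting $R_hz$ through Galerkin orthogonality, the time-derivative term is $\int_0^T\langle\partial_t v-\partial_t v_h,\, z-R_hz\rangle\,dt$, and while $\norm{z-R_hz}{L^2(H^1_0)}\leq Ch\norm{z}{L^2(H^2)}\leq Ch\norm{g}{L^2}$ supplies the factor $h$, the pairing with $\partial_t v_h$ is only useful if $\partial_t v_h$ is bounded in $L^2(0,T;H^{-1}(\Omega))$ \emph{uniformly in $h$}; a priori the discrete equation controls $\partial_t v_h$ only as a functional on $V_h$, and $z-R_hz\notin V_h$. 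The bound does hold: for $w\in H^1_0(\Omega)$ one has $(\partial_t v_h,w)=(\partial_t v_h,P_hw)=-(Dv_h,DP_hw)+\langle f,P_hw\rangle$, so the $H^1$-stability of $P_h$ gives $\norm{\partial_t v_h}{L^2(H^{-1})}\leq C\left(\norm{v_0}{L^2}+\norm{f}{L^2(H^{-1})}\right)$ — but this step must be written down, and it is precisely where quasi-uniformity enters the rough-data estimate. (Alternatively, testing the orthogonality relation with $P_hz$ instead of $R_hz$ makes the time-derivative term vanish against $\partial_t v_h$, at the cost of a term $(Dv_h,D(z-P_hz))$, which is then handled by the discrete energy bound on $\norm{Dv_h}{L^2}$.) With that point supplied, and noting that $(De,D(z-R_hz))=(Dv,D(z-R_hz))$ by Ritz orthogonality so that only $\norm{Dv}{L^2}$ is needed in the gradient term, your argument closes and yields both stated estimates.
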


The following is a consequence of discrete $L^p$ maximal regularity estimes.
\begin{thm}[{\cite[Corollary 2.2]{L2019}}]\label{thm_mfg_parabolic_semidiscrete_heat_error_p}
 Let $f \in L^2(0,T;H^{-1}(\Omega))$, $v_0 \in L^2(\Omega)$ and $v_0^h \in V_h$. Let $v \in \cH^1_2(Q_T) \cap C(Q_T)$ be the unique weak solution to
 \[
  \begin{cases}
   \partial_t v - \Delta v = f \quad & \textnormal{in } Q_T, \\
   v = 0 \quad & \textnormal{on } (0,T) \times \partial \Omega, \\
   v(0, \cdot) = v_0 \quad & \textnormal{on } \Omega
  \end{cases}
 \]
  and $v_h$ be the unique solution to \eqref{eq_mfg_parabolic_smooth_semidiscrete}. Then
 \begin{align*}
  \norm{v_h - P_h v}{L^q} & \leq C \left( \norm{v - R_h v}{L^q} + \norm{P_h v_0 - v_0^h}{L^q} \right).
 \end{align*}
\end{thm}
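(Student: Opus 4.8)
The plan is to show that the error $\theta_h := v_h - P_h v \in H^1(0,T;V_h)$ itself solves a discrete heat equation whose source and initial datum are controlled by the right-hand side of the claimed estimate, and then to apply the discrete maximal $L^q$ regularity of \cite{L2019}. Write $A_h \in \cL(V_h)$ for the discrete Laplacian, defined by $\int_\Omega A_h w_h\, \phi_h\,dx = \int_\Omega Dw_h \cdot D\phi_h\,dx$ for all $w_h, \phi_h \in V_h$, and let $(e^{-tA_h})_{t \geq 0}$ be the associated analytic semigroup on $V_h$, which is bounded on $L^q(\Omega)$ uniformly in $h$.

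First I would derive the error equation. Testing the weak formulation of the continuous heat equation against $\phi_h \in V_h$, and using that $P_h$ commutes with $\partial_t$ so that $\langle \partial_t v, \phi_h \rangle_{H^{-1},H^1} = \int_\Omega \partial_t (P_h v)\,\phi_h\,dx$, together with the defining identity $\int_\Omega D(R_h v)\cdot D\phi_h\,dx = \int_\Omega Dv \cdot D\phi_h\,dx$ of the Ritz projection, the continuous problem reads $\int_\Omega \partial_t(P_h v)\,\phi_h + D(R_h v)\cdot D\phi_h\,dx = \langle f, \phi_h\rangle_{H^{-1},H^1}$ for all $\phi_h \in V_h$. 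Subtracting this from \eqref{eq_mfg_parabolic_smooth_semidiscrete} cancels $f$ and leaves
\[
 \int_\Omega \partial_t \theta_h\, \phi_h + D(v_h - R_h v)\cdot D\phi_h\,dx = 0 \quad \textnormal{for all } \phi_h \in V_h.
\]
Writing $v_h - R_h v = \theta_h - (R_h v - P_h v)$ and using the definition of $A_h$, this is exactly the discrete heat equation $\partial_t \theta_h + A_h \theta_h = A_h(R_h v - P_h v)$ in $V_h$, with initial datum $\theta_h(0) = v_0^h - P_h v_0$ (since $v(0) = v_0$ gives $P_h v(0) = P_h v_0$).

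Next I would estimate $\theta_h$ in $L^q(Q_T)$. By Duhamel's formula,
\[
 \theta_h(t) = e^{-tA_h}\theta_h(0) + \int_0^t A_h e^{-(t-s)A_h}(R_h v - P_h v)(s)\,ds,
\]
where one factor $A_h$ has been commuted inside the semigroup. The first term is bounded by $C\norm{\theta_h(0)}{L^q}$ using the $L^q$-boundedness of $(e^{-tA_h})_{t\geq 0}$, uniform in $h$. The second term is precisely the discrete maximal-regularity operator applied to the source $R_h v - P_h v$, so \cite[Corollary 2.2]{L2019}, which encodes the discrete maximal $L^q$ regularity of $A_h$ with a constant independent of $h$, bounds it by $C\norm{R_h v - P_h v}{L^q}$. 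Hence $\norm{\theta_h}{L^q} \leq C\left(\norm{R_h v - P_h v}{L^q} + \norm{v_0^h - P_h v_0}{L^q}\right)$. Finally, since $R_h v(t) \in V_h$ gives $P_h R_h v = R_h v$, we have $R_h v - P_h v = P_h(R_h v - v)$, and the $L^q$-stability of $P_h$ from \cref{prop_parabolic_mfg_proj} yields $\norm{R_h v - P_h v}{L^q} = \norm{P_h(v - R_h v)}{L^q} \leq C\norm{v - R_h v}{L^q}$, closing the estimate.

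The main obstacle is the uniformity in $h$ of the maximal-regularity constant: the boundedness on $L^q(0,T;L^q(\Omega))$ of the operator $\psi \mapsto \int_0^t A_h e^{-(t-s)A_h}\psi(s)\,ds$ with a constant independent of the mesh size is a delicate property of the $\PP^1$ discretisation of the Dirichlet Laplacian, and is exactly the input supplied by \cite{L2019}. Granting this, the only points needing care are the bookkeeping in the error equation, so that the source appears in the form $A_h(R_h v - P_h v)$ and only a single power of $A_h$ is absorbed by maximal regularity, and the $L^q$-stability of $P_h$, which relies on the quasi-uniformity of the mesh.
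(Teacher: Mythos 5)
The paper does not prove this statement: it is imported verbatim as \cite[Corollary 2.2]{L2019}, so there is no internal proof to compare against. Your reconstruction --- deriving the discrete error equation $\partial_t\theta_h + A_h\theta_h = A_h(R_hv - P_hv)$ for $\theta_h = v_h - P_hv$ via the $L^2$-projection trick on the time derivative and the Ritz-projection identity on the stiffness term, then applying Duhamel together with the $h$-uniform $L^q$-boundedness of $(e^{-tA_h})_{t\ge 0}$ and the $h$-uniform discrete maximal $L^q$ regularity from \cite{L2019}, and finally using $R_hv - P_hv = P_h(R_hv - v)$ with the $L^q$-stability of $P_h$ --- is correct and is essentially the standard argument by which the cited corollary is established. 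You correctly identify the only nontrivial input (the mesh-independence of the maximal-regularity and semigroup constants for the $\PP^1$ Dirichlet Laplacian on quasi-uniform meshes), which is exactly what the reference supplies.
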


From \cref{thm_mfg_parabolic_semidiscrete_heat_error_p} we deduce some quasi-optimal error estimates.
\begin{cor}\label{cor:quasi_optimal_q}
 Assume that \ref{h_parabolic_regularity_q} holds in addition to the assumptions of \cref{thm_mfg_parabolic_semidiscrete_heat_error_p}. Then, if $v \in W^{0,1}_q(Q_T)$ and $v_0^h := P_h v_0$, we have
 \begin{align}
  \norm{v - v_h}{W^{0,1}_q} & \leq C \left[\inf_{\tilde v_h \in  L^q(0,T;V_h)} \norm{v - \tilde v_h}{W^{0,1}_q} \right], \\
  \norm{v - v_h}{L^q} & \leq Ch \left[\inf_{\tilde v_h \in  L^q(0,T;V_h)} \norm{v - \tilde v_h}{W^{0,1}_q} \right]. \label{eq:quasi_optimal_Lq}
 \end{align}
\end{cor}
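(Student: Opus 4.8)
The plan is to split the error $v - v_h$ through the Ritz projection $R_h v$, treating the approximation part $v - R_h v$ with elliptic approximation theory and the fully discrete part $R_h v - v_h \in L^q(0,T;V_h)$ with an inverse inequality. Since we take $v_0^h = P_h v_0$, the term $\norm{P_h v_0 - v_0^h}{L^q}$ in \cref{thm_mfg_parabolic_semidiscrete_heat_error_p} vanishes, so that theorem reduces to the clean bound $\norm{v_h - P_h v}{L^q}\le C\norm{v - R_h v}{L^q}$, which is the engine of the whole argument.

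Next I would record two facts about the Ritz projection, both understood through the Bochner extension of operators recalled above. First, combining the $W^{1,q}$ stability from \cref{prop_parabolic_mfg_ritz} with the projection identity $R_h \tilde v_h = \tilde v_h$ for $\tilde v_h\in V_h$ yields the quasi-optimality $\norm{v - R_h v}{W^{0,1}_q}\le C\inf_{\tilde v_h}\norm{v - \tilde v_h}{W^{0,1}_q}$, where the infimum is over $\tilde v_h \in L^q(0,T;V_h)$. Second, and this is the crucial point, an Aubin--Nitsche duality argument gives the gain of one power of $h$ in the lower-order norm, namely $\norm{v - R_h v}{L^q}\le Ch\,\norm{D(v - R_h v)}{L^q}$. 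This is where assumption \ref{h_parabolic_regularity_q} enters: for a.e.\ $t$ one tests the Ritz error against the solution $w$ of $-\Delta w = \module{v - R_h v}^{q-1}\sgn(v - R_h v)$ with homogeneous Dirichlet data, uses the Galerkin orthogonality of $R_h$ to subtract an arbitrary $\tilde w_h\in V_h$, and controls $w$ in $W^{2,q'}$ with $q' = q/(q-1)\le q/2$, which is admissible since $q > 2(d+2)$. Integrating the resulting pointwise-in-time bound over $(0,T)$ and combining with the quasi-optimality gives $\norm{v - R_h v}{L^q}\le Ch\inf_{\tilde v_h}\norm{v - \tilde v_h}{W^{0,1}_q}$.

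With these in hand I would estimate the discrete part. Writing $R_h v - v_h = P_h(v - R_h v) + (P_h v - v_h)$, where the first identity uses $P_h R_h v = R_h v$, both summands are controlled by $\norm{v - R_h v}{L^q}$: the first by the $L^q$ stability of $P_h$ from \cref{prop_parabolic_mfg_proj}, the second by the reduced form of \cref{thm_mfg_parabolic_semidiscrete_heat_error_p}. Hence $\norm{R_h v - v_h}{L^q}\le C\norm{v - R_h v}{L^q}\le Ch\inf_{\tilde v_h}\norm{v - \tilde v_h}{W^{0,1}_q}$. The $L^q$ estimate \eqref{eq:quasi_optimal_Lq} then follows from $\norm{v - v_h}{L^q}\le \norm{v - R_h v}{L^q} + \norm{R_h v - v_h}{L^q}$, both terms being $O(h\inf)$.

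For the $W^{0,1}_q$ estimate I would split $\norm{v - v_h}{W^{0,1}_q}\le \norm{v - R_h v}{W^{0,1}_q} + \norm{R_h v - v_h}{W^{0,1}_q}$; the first term is quasi-optimal by the first Ritz fact, and for the second, since $R_h v - v_h$ is discrete, the inverse inequality on the quasi-uniform mesh gives $\norm{R_h v - v_h}{W^{0,1}_q}\le Ch^{-1}\norm{R_h v - v_h}{L^q}$. The factor $h^{-1}$ is exactly cancelled by the $O(h)$ bound on $\norm{R_h v - v_h}{L^q}$ just obtained, leaving $C\inf_{\tilde v_h}\norm{v - \tilde v_h}{W^{0,1}_q}$. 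The main obstacle is the duality step, that is, obtaining the extra power of $h$ in $\norm{v - R_h v}{L^q}$: it is what simultaneously yields the superconvergent $L^q$ bound and, via the inverse inequality, the optimal $W^{0,1}_q$ bound, and it is precisely where the elliptic regularity \ref{h_parabolic_regularity_q} is needed; everything else is bookkeeping with the triangle inequality and the stability of the two projections.
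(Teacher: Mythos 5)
Your proposal is correct and follows essentially the same route as the paper: both arguments hinge on \cref{thm_mfg_parabolic_semidiscrete_heat_error_p} with $v_0^h = P_h v_0$, the inverse inequality to pass from $L^q$ to $W^{0,1}_q$ on the discrete part, the superconvergence bound $\norm{v - R_h v}{L^q} \leq Ch \norm{v - R_h v}{W^{0,1}_q}$, and the $W^{0,1}_q$-quasi-optimality of the Ritz projection; your decomposition through $R_h v$ merely streamlines the paper's detour through $P_h v$ and $\cI_h v$. A nice bonus is that you actually sketch the Aubin--Nitsche duality proof of the superconvergence step (which the paper asserts without justification), correctly identifying that \ref{h_parabolic_regularity_q} supplies the needed $W^{2,q'}$ regularity since $q' = q/(q-1) \leq q/2$.
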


\begin{proof}
  Using the triangular inequality, we have
  \begin{equation}\label{eq:cor_quasi_optimal_q_triangle}
  \begin{split}
   \norm{v - v_h}{W^{0,1}_q} & \leq \norm{v - P_h v}{W^{0,1}_q} + \norm{P_h v - v_h}{W^{0,1}_q} \\
   & \leq \norm{v - \cI_h v}{W^{0,1}_q} + \norm{\cI_h v - P_h v}{W^{0,1}_q} + \norm{P_h v - v_h}{W^{0,1}_q},
  \end{split}
  \end{equation}
  Where $\cI_h$ denotes the usual finite element interpolation operator. Since $\cI_h v = P_h \circ \cI_h v$ we obtain, using \cref{prop_parabolic_mfg_proj},
  \begin{equation}
    \norm{v - v_h}{W^{0,1}_q} \leq C \norm{v - \cI_h v}{W^{0,1}_q} + \norm{P_h v - v_h}{W^{0,1}_q}
  \end{equation}
  for some $C > 0$. From the inverse estimate \cite[Corollary 1.141]{EG2004}, we have
  \[
   \norm{P_h v - v_h}{W^{0,1}_q} \leq Ch^{-1} \norm{P_hv - v_h}{L^q},
  \]
  and using \cref{thm_mfg_parabolic_semidiscrete_heat_error_p}, we get
  \begin{equation}
    \norm{v - v_h}{W^{0,1}_q} \leq C \left(\norm{v - \cI_h v}{W^{0,1}_q} + h^{-1}\norm{v - R_h v}{L^q} \right)
  \end{equation}
  for some $C >0$. Since
  \begin{equation}\label{eq:cor_quasi_optimal_q_Rh}
   \norm{v - R_h v}{L^q} \leq C h\norm{v - R_h v}{W^{0,1}_q}
  \end{equation}
  we obtain
  \begin{equation}
    \norm{v - v_h}{W^{0,1}_q} \leq C \left(\norm{v - \cI_h v}{W^{0,1}_q} + \norm{v - R_h v}{W^{0,1}_q} \right)
  \end{equation}
    Notice first that
    \begin{align*}
     \norm{v - \cI_h v}{W^{0,1}_q} & \leq \norm{v - \tilde v_h}{W^{0,1}_q} + \norm{\tilde v_h - \cI_h v}{W^{0,1}_q} \\
     & = \norm{v - \tilde v_h}{W^{0,1}_q} + \norm{\cI_h\left( \tilde v_h - v \right)}{W^{0,1}_q} \\
     & \leq C \norm{v - \tilde v_h}{W^{0,1}_q}
    \end{align*}
    for every $\tilde v_h \in L^q(0,T;V_h)$, where we have used the $W^{1,q}$ stability of $\cI_h$ \cite[Corollary 1.109]{EG2004}, since $q > d$. It follows that
    \[
     \norm{v - \cI_h v}{W^{0,1}_q} \leq C \inf_{\tilde v_h \in L^q(0,T;V_h)} \norm{v - \tilde v_h}.
    \]
    Moreover, using \cref{prop_parabolic_mfg_ritz} and the fact that $R_h w_h = w_h$ for all $w_h \in V_h$, one may prove similarly that
    \begin{equation}\label{eq:quasi_optimal_Rh}
     \norm{v - R_h v}{W^{0,1}_q} \leq C \inf_{\tilde v_h \in L^q(0,T;V_h)} \norm{v - \tilde v_h}.
    \end{equation}
  In addition, arguing as in \eqref{eq:cor_quasi_optimal_q_triangle}, we have
  \[
   \norm{v - v_h}{L^q} \leq C \norm{v - \cI_h v}{L^q} + \norm{P_h v - v_h}{L^q}.
  \]
  Using \cref{thm_mfg_parabolic_semidiscrete_heat_error_p}, we deduce
  \[
    \norm{v - v_h}{L^q} \leq  C \norm{v - \cI_h v}{L^q} + \norm{v - R_h v}{L^q}.
  \]
  Noticing that
  \[
   \norm{v - \cI_h v}{L^q} = \norm{\left(v - R_h v \right) - \cI_h \left(v - R_h v \right)}{L^q} \leq Ch \norm{v - R_h v}{W^{0,1_q}},
  \]
  where we have used \cite[Corollary 1.109]{EG2004} to obtain the last inequality, and using \eqref{eq:cor_quasi_optimal_q_Rh} we conclude that
  \[
   \norm{v - v_h}{L^q} \leq  Ch \norm{v - R_h v}{W^{0,1_q}}.
  \]
  The estimate \eqref{eq:quasi_optimal_Lq} then follows from \eqref{eq:quasi_optimal_Rh}.
\end{proof}

In what follows we denote by $S_I^h$ and $S_{IS}^h$ the linear operators defined by $S_I v_0^h = v_h$, where $v_h$ solves \eqref{eq_mfg_parabolic_smooth_semidiscrete} with $f=0$, and $S^h_{IS} f =  u_h$, where $u_h$ solves \eqref{eq_mfg_parabolic_smooth_semidiscrete} with $v_0^h = 0$. We also consider $S^h_T$ and $S_{TS}$ their backward analogues, i.e.,
\[
 \left(S^h_T v_0^h \right)(t,x) = \left(S^h_I v_0^h \right)(T-t,x) \quad  \textnormal{and} \quad \left(S^h_{TS} f \right)(t,x) = \left(S^h_{IS} f \right)(T - t, x).
\]
Of course, these operators must be seen as semidiscrete finite-element approximations of $S_I$, $S_{IS}$, $S_T$ and $S_{TS}$ defined in \eqref{eq_mfg_parabolic_smooth_SI}, \eqref{eq_mfg_parabolic_smooth_SIS}, \eqref{eq_mfg_parabolic_smooth_ST} and \eqref{eq_mfg_parabolic_smooth_STS}, respectively.

\begin{lem}\label{lem_mfg_parabolic_errors}
 Assume \ref{h_parabolic_regularity_q}. Then
 \begin{gather*}
  S_I^h \circ P_h \in \cL(L^\infty(\Omega), L^q(Q_T)),\, S_T^h  \circ P_h \in \cL(W^{2,\infty}(\Omega), W^{0,1}_q(Q_T)), \\ \, S_{IS}^h \in \cL(L^{q/2}(0,T;W^{-1,q/2}(\Omega)), L^q(Q_T)), \, S_{TS}^h \in \cL(L^{q/2}(Q_T), W^{0,1}_q(Q_T))
 \end{gather*}
 and we have
 \begin{align*}
  \norm{S_I - S_I^h \circ P_h}{\cL(L^\infty,L^q)} +  \norm{S_{IS} - S_{IS}^h}{\cL(L^{q/2}(W^{-1,q/2}),L^q)}  \leq C h^{2/q} \left (1 + \module{\ln(h)}^{1 - 2/q} \right), \\
  \norm{S_T - S_T^h \circ P_h}{\cL(W^{2,\infty}, W^{0,1}_q)} + \norm{S_{TS} - S_{TS^h}}{\cL(L^{q/2}, W^{0,1}_q)} \leq C\left (h + h^{1 - d/q} + h^{1/2}\module{\ln(h)}^{1/2} \right).
 \end{align*}
\end{lem}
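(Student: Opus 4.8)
The plan is to split the four operators into the two forward operators $S_I, S_{IS}$, for which only the $L^q(Q_T)$ norm of the error is required, and the two backward operators $S_T, S_{TS}$, for which the full $W^{0,1}_q(Q_T)$ norm (gradient included) is needed. In each case the boundedness assertions follow once the error bounds are known, by combining them with the continuous mapping properties of $S_I, S_{IS}, S_T, S_{TS}$ recorded after \eqref{eq_mfg_parabolic_smooth_spaces}, so I would concentrate on the error estimates. Throughout I would use that, $\Omega$ being bounded, the lower $L^p$ norms of the data are dominated by the higher ones entering the operator norms, together with the embedding $L^{q/2}(0,T;W^{-1,q/2}(\Omega)) \hookrightarrow L^2(0,T;H^{-1}(\Omega))$ already noted in the excerpt.

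For the forward operators I would obtain the rate $h^{2/q}(1+|\ln h|^{1-2/q})$ by interpolating two endpoint estimates. Writing $v=S_I\rho_0$ and $v_h=S_I^h P_h\rho_0$ (and analogously for $S_{IS}$, with right-hand side $g$ and zero initial datum), \cref{thm_mfg_parabolic_semidiscrete_heat_error_2} yields the $L^2(Q_T)$ bound $\norm{v-v_h}{L^2}\le Ch\norm{\rho_0}{L^2}$, of order $h$. For the second endpoint I would bound the error in $L^\infty(Q_T)$ by $C|\ln h|$ times the data norm: the continuous solution is bounded in $L^\infty(Q_T)$ by the maximum principle (or by \cref{prop_parabolic_FP_DGNM} for $S_{IS}$, after writing the source in divergence form with an $L^{q/2}$ potential), while the discrete solution is $L^\infty$-stable up to the single logarithmic factor carried by the maximum-norm Ritz estimate \eqref{eq:ritz_max_norm}. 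The elementary interpolation inequality $\norm{w}{L^q}\le\norm{w}{L^2}^{2/q}\norm{w}{L^\infty}^{1-2/q}$ applied to $w=v-v_h$ then produces $h^{2/q}|\ln h|^{1-2/q}$, the additive $1$ covering the regime where $|\ln h|$ is bounded.

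For the backward operators I would instead exploit the higher regularity of the solutions: both $S_T u_T$ and $S_{TS}f$ belong to $W^{1,2}_{q/2}(Q_T)$ by \cref{prop_parabolic_W2p} applied to the time-reversed equations, and their gradients gain space-time integrability through the parabolic Sobolev embedding behind \cref{lem:parabolic_compact_embedding}. The quasi-optimal estimates of \cref{cor:quasi_optimal_q} reduce $\norm{v-v_h}{W^{0,1}_q}$ and $\norm{v-v_h}{L^q}$ to the best-approximation error $\inf_{\tilde v_h}\norm{v-\tilde v_h}{W^{0,1}_q}$, which I would estimate with the interpolation operator $\cI_h$. The function-value part contributes the harmless $h$ term; the gradient part, because the solution has second spatial derivatives only in $L^{q/2}$, requires the Sobolev-corrected interpolation estimate $\norm{v(t)-\cI_h v(t)}{W^{1,q}}\le Ch^{1-d/q}\norm{v(t)}{W^{2,q/2}}$, which is the source of the $h^{1-d/q}$ term. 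The remaining $h^{1/2}|\ln h|^{1/2}$ term I would obtain by interpolating the gradient error between the $\cH^1_2$ estimate of \cref{thm_mfg_parabolic_semidiscrete_heat_error_2} (gradient error of order $h$ in $L^2(Q_T)$) and a maximum-norm gradient bound, the logarithm again coming from \eqref{eq:ritz_max_norm}; the weight $1/2$ reflects that a single derivative sits halfway between the $L^2$ and maximum-norm levels.

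The hard part, in both halves of the argument, is the maximum-norm control of the discrete solution and of its gradient: the semidiscrete heat propagator is $L^\infty$-stable only up to a logarithmic factor, and I expect the delicate step to be establishing this cleanly from the available tools, namely the maximum-norm Ritz estimate \eqref{eq:ritz_max_norm}, the discrete $L^q$ estimate of \cref{thm_mfg_parabolic_semidiscrete_heat_error_p}, and the $W^{1,p}$-stability of $R_h$ from \cref{prop_parabolic_mfg_ritz}, supplemented by an inverse inequality to pass between discrete norms. A secondary difficulty, specific to the backward operators, is the bookkeeping needed to keep the spatial Sobolev correction $h^{-d/q}$ and the temporal integrability gap (data and second derivatives only in $L^{q/2}$ in time, against an $L^q$-in-time target) separate, so that the three distinct terms emerge rather than a single coarser power of $h$.
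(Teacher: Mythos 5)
Your overall architecture (triangle inequality, two endpoint estimates, interpolation to reach $L^q$, inverse estimates for the gradient norms of the backward operators) matches the paper's, and your accounting of where the three rates $h$, $h^{1-d/q}$ and $h^{1/2}|\ln h|^{1/2}$ come from is essentially right. But there is a genuine gap in your treatment of the forward operators. You propose to interpolate the \emph{full} error $v - v_h$ between an $L^2(Q_T)$ bound and an $L^\infty(Q_T)$ bound, and for the latter you assert that ``the discrete solution is $L^\infty$-stable up to the single logarithmic factor carried by the maximum-norm Ritz estimate \eqref{eq:ritz_max_norm}.'' That estimate controls the elliptic projection $R_h$, not the semidiscrete heat propagator; maximum-norm stability of the semidiscrete parabolic evolution is a separate and substantially harder result (classically proved via resolvent or discrete Green's function estimates) and does not follow from the tools listed in the paper. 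You correctly flag this as the delicate step, but it is not merely delicate --- it is the point where your route departs from what the available lemmas can deliver.

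The paper's proof avoids this entirely: it first applies the discrete $L^p$ maximal regularity estimate of \cref{thm_mfg_parabolic_semidiscrete_heat_error_p} \emph{directly at the exponent} $q$, which reduces $\norm{P_h v - v_h}{L^q}$ to the purely elliptic quantity $\norm{v - R_h v}{L^q}$ (plus the initial-datum mismatch). Only this Ritz projection error is then interpolated between its $L^2$ endpoint (rate $h$, from $H^1$-stability) and its $L^\infty$ endpoint (rate $|\ln h|$, from \eqref{eq:ritz_max_norm}), and the remaining term $\norm{v - P_h v}{L^q}$ is handled by the same interpolation applied to $P_h$. So the evolution operator never needs to be shown $L^\infty$-stable; all maximum-norm information is extracted from the stationary projection. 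The same device is what makes the $h^{3/2}|\ln h|^{1/2}$ interpolation for $S_T^h$ and $S_{TS}^h$ legitimate (it is performed on $v - R_h v$, interpolating between $L^{q/2}$ with rate $h^2$ and $L^\infty$ with rate $h|\ln h|$, before paying the factor $h^{-1}$ from the inverse estimate), rather than on a maximum-norm \emph{gradient} error as you suggest, for which no logarithmic estimate is provided. If you restructure your argument so that \cref{thm_mfg_parabolic_semidiscrete_heat_error_p} is invoked before any interpolation, the rest of your plan goes through.
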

\begin{proof}\, We consider each operator separately. \\
\noindent\underline{\textit{Estimate on $S_I^h$.}}
 First, using the triangle inequality, we have
 \[
  \norm{S_I - S_I^h \circ P_h}{\cL(L^\infty,L^q)} \leq \norm{S_I - P_h \circ S_I}{\cL(L^\infty,L^q)} + \norm{P_h \circ S_I - S_I^h \circ P_h}{\cL(L^\infty,L^q)}.
 \]
 From \cref{thm_mfg_parabolic_semidiscrete_heat_error_p}, for every $v_0 \in L^\infty$, we have
 \[
  \norm{P_h \circ S_I v_0 - S_I^h \circ P_h v_0}{L^q} \leq C \norm{S_I v_0 - R_h \circ S_I v_0}{L^q}.
 \]
 Using \cref{prop_parabolic_mfg_ritz}, we have that
 \[
  \norm{S_I v_0 - R_h \circ S_I v_0}{L^\infty} \leq C \module{\ln(h)} \norm{S_I v_0}{L^\infty} \leq C \module{\ln(h)} \norm{v_0}{L^\infty}.
 \]
 On the other hand, also using \cref{prop_parabolic_mfg_ritz}, we have
 \[
  \norm{S_I v_0 - R_h \circ S_I v_0}{L^2} \leq Ch \norm{S_I v_0}{W^{0,1}_2} \leq C h \norm{v_0}{L^\infty}.
 \]
 By interpolation, we deduce that
 \[
   \norm{S_I v_0 - R_h \circ S_I v_0}{L^q} \leq C h^{2/q} \module{\ln(h)}^{1 - 2/q} \norm{v_0}{L^\infty}.
 \]
Moreover, from \cref{prop_parabolic_FP_wellposed,prop_parabolic_FP_DGNM}, we have $S_I \in \cL(L^\infty(\Omega),\cH_1^2(Q_T) \cap L^\infty(Q_T))$. We deduce from \cref{prop_parabolic_mfg_proj} that
 \begin{align*}
  \norm{S_I v_0 - P_h \circ S_I v_0}{L^\infty} & \leq C \norm{S_I v_0}{L^\infty} \leq C \norm{v_0}{L^\infty}, \\
  \norm{S_I v_0 - P_h \circ S_I v_0}{L^2} & \leq Ch \norm{S_I v_0}{W^{0,1}_2} \leq Ch \norm{v_0}{L^\infty}.
 \end{align*}
 Then, by interpolation, we obtain
 \[
  \norm{S_I v_0 - P_h \circ S_I v_0}{L^q} \leq Ch^{2/q} \norm{v_0}{L^\infty}.
 \]
 We conclude that
 \[
  \norm{S_I v_0 - S_I^h \circ P_h v_0}{L^q} \leq Ch^{2/q} \left (1 + \module{\ln(h)}^{1 - 2/q} \right) \norm{v_0}{L^\infty},
 \]
 and hence that
 \[
  \norm{S_I - S_I^h \circ P_h}{\cL(L^\infty,L^q)} \leq Ch^{2/q} \left (1 + \module{\ln(h)}^{1 - 2/q} \right).
 \]

 \noindent\underline{\textit{Estimate on $S_T^h$.}} Let $v_T \in W^{2,\infty}(\Omega)$. Using the triangle inequality, we have
 \[
  \norm{S_T v_T - S_T^h \circ P_h v_T}{W^{0,1}_q} \leq \norm{S_T v_T - P_h \circ S_T v_T}{W^{0,1}_q} + \norm{P_h \circ S_T v_T - S_T^h \circ P_h v_T}{W^{0,1}_q}.
 \]
 Moreover, from \cref{thm_mfg_parabolic_semidiscrete_heat_error_p}, \cref{prop_parabolic_mfg_ritz} and the inverse estimates \cite[Theorem 4.5.11]{BS2008}, we have
 \begin{align*}
  \norm{P_h \circ S_T v_T - S_T^h \circ P_h v_T}{W^{0,1}_q} & \leq Ch^{-1} \norm{P_h \circ S_T v_T - S_T^h \circ P_h v_T}{L^q} \\
  & \leq Ch^{-1} \norm{S_T v_T - R_h \circ S_T v_T}{L^q}.
 \end{align*}
 As a consequence of \cref{prop_parabolic_mfg_ritz}, we have
 \[
  \norm{S_T v_T - R_h \circ S_T v_T}{L^\infty} \leq Ch \module{\ln(h)} \norm{S_T v_T}{L^\infty(W^{1,\infty})} \leq Ch \module{\ln(h)} \norm{v_T}{W^{2,\infty}}.
 \]
 On the other hand, since we assume \ref{h_parabolic_regularity_q}, we have
 \begin{align*}
  \norm{S_T v_T - R_h \circ S_T v_T}{L^{q/2}} \leq C h^2 \norm{S_T v_T}{W^{1,2}_{q/2}} \leq C h^2 \norm{v_T}{W^{2,\infty}}.
 \end{align*}
 By interpolation we deduce
 \[
  \norm{S_T v_T - R_h \circ S_T v_T}{L^{q}} \leq C h^{3/2} \module{\ln(h)}^{1/2} \norm{v_T}{W^{2,\infty}}
 \]
 and it follows that
 \[
  \norm{P_h \circ S_T v_T - S_T^h \circ P_h v_T}{W^{0,1}_q} \leq C h^{1/2} \module{\ln(h)}^{1/2} \norm{v_T}{W^{2,\infty}}.
 \]
 In addition, using \cite[Corollary 1.109]{EG2004} and the inverse estimates \cite[Theorem 4.5.11]{BS2008},
 \begin{align*}
  \norm{S_T v_T - P_h \circ S_T v_T}{W^{0,1}_q} & \leq \norm{S_T v_T - \cI_h \circ S_T v_T}{W^{0,1}_q} + \norm{\cI_h \circ S_T v_T - P_h \circ S_T v_T}{W^{0,1}_q} \\
  & \leq C h \norm{v_T}{W^{2,\infty}} + \norm{\cI_h \circ S_T v_T - P_h \circ S_T v_T}{W^{0,1}_q} \\
  & \leq C \left (h \norm{v_T}{W^{2,\infty}} + h^{-(1 + d/q)} \norm{\cI_h \circ S_T v_T - P_h \circ S_T v_T}{L^{q/2}} \right)
 \end{align*}
 Using the fact that $P_h = I$ on $V_h$ and \cref{prop_parabolic_mfg_proj}, we also have
 \begin{align*}
  \norm{\cI_h \circ S_T v_T - P_h \circ S_T v_T}{L^{q/2}} & \leq C \norm{\cI_h \circ S_T v_T - S_T v_T}{L^{q/2}} \\
  & \leq C h^2 \norm{S_T v_T}{W^{1,2}_{q/2}} \\
  & \leq Ch^2 \norm{v_T}{W^{2,\infty}}.
 \end{align*}
 It follows that
 \[
  \norm{S_T v_T - P_h \circ S_T v_T}{W^{0,1}_q} \leq C \left ( h + h^{1 - d/q} \right) \norm{v_T}{W^{2,\infty}},
 \]
 and we conclude that
 \[
  \norm{S_T - S_T^h \circ P_h}{\cL(W^{2,\infty},W^{0,1}_q)} \leq C \left(h + h^{1 - d/q} + h^{1/2}\module{\ln(h)}^{1/2} \right).
 \]

\noindent\underline{\textit{Estimate on $S_{IS}^h$.}} Let $g \in L^{q/2}(0,T;W^{-1,q/2}(\Omega))$ and notice that
\[
 \norm{S_{IS} g - S_{IS}^h g}{L^q} \leq \norm{S_{IS} g - P_h \circ S_{IS} g}{L^q} + \norm{P_h \circ S_{IS} g - S_{IS}^h g}{L^q}.
\]
From \cref{thm_mfg_parabolic_semidiscrete_heat_error_p}, we have
\[
 \norm{P_h \circ S_{IS} g - S_{IS}^h g}{L^q} \leq C \norm{S_{IS} g - R_h \circ S_{IS} g}{L^q}.
\]
On the one hand, since \cref{prop_parabolic_FP_DGNM} implies that $S_{IS} g \in L^\infty(Q_T)$, we deduce from \cref{prop_parabolic_mfg_ritz} that
\[
 \norm{S_{IS} g - R_h \circ S_{IS} g}{L^\infty} \leq C \module{\ln(h)} \norm{S_{IS}g}{L^\infty} \leq C \module{\ln(h)} \norm{g}{L^{q/2}(W^{-1,q/2})}.
\]
On the other hand, using \cref{thm_mfg_parabolic_semidiscrete_heat_error_2}, we have
\[
 \norm{S_{IS} g - R_h \circ S_{IS} g}{L^2} \leq Ch \norm{g}{L^{q/2}(W^{-1,q/2})}.
\]
By interpolation we conclude that
\[
 \norm{S_{IS} g - R_h \circ S_{IS} g}{L^q} \leq Ch^{2/q}\module{\ln(h)}^{1 - 2/q} \norm{g}{L^{q/2}(W^{-1,q/2})}.
\]
Moreover, an argument similar to the one used in the case of $S_I^h$ yields the estimate
\[
 \norm{S_{IS} g - P_h \circ S_{IS} g}{L^q} \leq Ch^{2/q}\norm{g}{L^{q/2}(W^{-1,q/2})}.
\]
It follows that
\[
 \norm{S_{IS} - S_{IS}^h}{\cL(L^{q/2}(W^{-1,q/2}),L^q)} \leq C h^{2/q} \left ( 1 + \module{\ln(h)}^{1 - 2/q} \right).
\]

\noindent\underline{\textit{Estimate on $S_{TS}^h$.}} Let $f \in L^{q/2}(Q_T)$ and write
\[
 \norm{S_{TS} f - S_{TS}^h f}{W^{0,1}_q} \leq \norm{S_{TS} f - P_h \circ S_{TS} f}{W^{0,1}_q} + \norm{P_h \circ S_{TS} f - S_{TS}^h f}{W^{0,1}_q}.
\]
Using the inverse estimates \cite[Theorem 4.5.11]{BS2008} and \cref{thm_mfg_parabolic_semidiscrete_heat_error_p}, we have
\begin{align*}
 \norm{P_h \circ S_{TS} f - S_{TS}^h f}{W^{0,1}_q} & \leq Ch^{-1} \norm{P_h \circ S_{TS} f - S_{TS}^h f}{L^q} \\
 & \leq C h^{-1} \norm{S_{TS} f - R_h \circ S_{TS} f}{L^q}.
\end{align*}
Arguing similarly to the case of $S_{T}^h$, we obtain
\[
 \norm{S_{TS} f - R_h \circ S_{TS} f}{L^q} \leq C h^{3/2} \module{\ln(h)}^{1/2}\norm{f}{L^{q/2}}
\]
and
\[
 \norm{S_{TS} f - P_h \circ S_{TS} f}{W^{0,1}_q} \leq C \left (h + h^{1 - d/q} \right) \norm{f}{L^{q/2}},
\]
so that
\[
 \norm{S_{TS} - S_{TS}^h}{\cL(L^{q/2}, W^{0,1}_q)} \leq C\left( h + h^{1 - d/q} + h^{1/2}\module{\ln(h)}^{1/2} \right).
\]
\end{proof}

\subsection{Semidiscrete approximations of stable solutions to the MFG system}

 We consider the Banach spaces $X$, $Y_1$ and $Y_2$ defined in \eqref{eq_mfg_parabolic_smooth_spaces}. Recalling that $(u,m) \in X$ is a weak solution to \eqref{eq_mfg_parabolic_smooth} if and only if $\Upsilon(u,m) = 0$, where $\Upsilon \colon X \to X$ is defined in \eqref{eq_mfg_parabolic_smooth_def_Upsilon}, we look for a pair $(u_h,m_h) \in X$ such that $\Upsilon_h(u_h,m_h) = 0$, where
\[
 \Upsilon_h(v,\rho) := \left (I - \cS_1^h \cR_1 - \cS_2^h \cR_2 \right)(v,\rho) \quad \textnormal{for all } (v,\rho) \in X
\]
and
\[
 \cS_1^h := \begin{pmatrix}S_T^h \circ P_h & 0 \\ 0 & S_I^h \circ P_h \end{pmatrix}, \quad \cS_2^h := \begin{pmatrix} S_{TS}^h &  \\ 0 & S_{IS}^h \end{pmatrix}.
\]

In order to prove our main result, we are going to apply the following version of the Brezzi-Rappaz-Raviart approximations theorem \cite{BRR1980}.
\begin{thm}[Brezzi-Rappaz-raviart, {\cite[Theorem 3.2]{BLS2025a}}]\label{thm:reworked_BRR}
 Let $X$ and $Y$ be Banach spaces, let $F \colon X \to Y$ be continuous and let $\bar x \in X$ be such that $F(\bar x) = 0$. For every $h > 0$, let $F_h \colon X \to Y$ be continuous and assume that
\begin{enumerate}[label={\rm{(\roman*)}}]
 \item $F_h(\bar x) \xrightarrow{h \to 0} 0$;
 \item $F$ and $F_h$ are Fréchet differentiable at $\bar x$, with
 \[
  \lim_{h \to 0} \norm{dF[\bar x] - dF_h[\bar x]}{\cL(X,Y)} = 0,
 \]
 and there exists $c \colon \RR_+ \to \RR_+$, nondecreasing with $c(0^+) = 0$, such that, for all $x,\, x' \in B_X(\bar x,r)$ and $r > 0$, we have
 \begin{align*}
  \norm{F(x) - F(x') - dF[\bar x](x - x')}{Y} & \leq c(r) \norm{x - x'}{X}, \\
  \norm{F_h(x) - F_h(x') - dF_h[\bar x](x - x')}{Y} & \leq c(r) \norm{x - x'}{X};
 \end{align*}
 \item $dF[\bar x]$ is an isomorphism.
\end{enumerate}
 Then there exists a neighborhood $\mathcal{O}$ of $\bar x$ and $h_0 > 0$ such that, for all $0 < h \leq h_0$, there exists a unique $\bar x_h \in \mathcal{O}$ such that $F_h(\bar x_h) = 0$
 and we have the error estimate
 \[
  \norm{\bar x - \bar x_h}{X} \leq 2 \norm{dF[\bar x]^{-1}}{\cL(Y,X)} \norm{F_h(\bar x)}{Y}
 \]
\end{thm}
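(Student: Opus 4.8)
The plan is to recast the equation $F_h(x) = 0$ as a fixed-point problem and apply the Banach fixed-point theorem on a small closed ball around $\bar x$. Since hypothesis (iii) makes $dF[\bar x]$ an isomorphism, I would set $M := \norm{dF[\bar x]^{-1}}{\cL(Y,X)} < \infty$ and define
\[
 T_h(x) := x - dF[\bar x]^{-1} F_h(x), \qquad x \in X,
\]
so that $x$ is a fixed point of $T_h$ exactly when $F_h(x) = 0$. Everything then reduces to showing that, for $r$ and $h$ small enough, $T_h$ is a contraction mapping the closed ball $\overline{B_X(\bar x, r)}$ into itself; the fixed point it produces will be $\bar x_h$.

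For the contraction estimate the key algebraic observation is that, writing $G_h(x,x') := F_h(x) - F_h(x') - dF[\bar x](x - x')$, one has $T_h(x) - T_h(x') = -\,dF[\bar x]^{-1} G_h(x,x')$, because $dF[\bar x]^{-1} dF[\bar x]$ is the identity. Splitting $G_h$ through $dF_h[\bar x]$ and invoking hypothesis (ii) gives, for $x, x' \in \overline{B_X(\bar x, r)}$,
\[
 \norm{G_h(x,x')}{Y} \leq \left( c(r) + \norm{dF[\bar x] - dF_h[\bar x]}{\cL(X,Y)} \right) \norm{x - x'}{X},
\]
hence $\norm{T_h(x) - T_h(x')}{X} \leq M\bigl( c(r) + \norm{dF[\bar x] - dF_h[\bar x]}{\cL(X,Y)}\bigr)\norm{x - x'}{X}$. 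Because $c(0^+) = 0$, I would \emph{first} fix $r > 0$ so small that $Mc(r) \leq 1/4$, and \emph{then}, using $\norm{dF[\bar x] - dF_h[\bar x]}{\cL(X,Y)} \to 0$ from (ii), choose $h_0 > 0$ so that $M\norm{dF[\bar x] - dF_h[\bar x]}{\cL(X,Y)} \leq 1/4$ for $h \leq h_0$. This makes $T_h$ a $\tfrac12$-contraction.

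The self-mapping property is where hypothesis (i) enters: for $x \in \overline{B_X(\bar x, r)}$, using the contraction bound with $x' = \bar x$ and $T_h(\bar x) - \bar x = -\,dF[\bar x]^{-1}F_h(\bar x)$,
\[
 \norm{T_h(x) - \bar x}{X} \leq \norm{T_h(x) - T_h(\bar x)}{X} + \norm{T_h(\bar x) - \bar x}{X} \leq \tfrac12 r + M \norm{F_h(\bar x)}{Y},
\]
so shrinking $h_0$ further so that $M\norm{F_h(\bar x)}{Y} \leq r/2$ (possible by (i)) yields $T_h\bigl(\overline{B_X(\bar x, r)}\bigr) \subset \overline{B_X(\bar x, r)}$. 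The Banach fixed-point theorem then gives a unique $\bar x_h$ in this ball with $F_h(\bar x_h) = 0$, and one takes $\cO := B_X(\bar x, r)$. The error estimate comes from the same contraction inequality applied at $x = \bar x_h$, $x' = \bar x$: from $\bar x_h = T_h(\bar x_h)$,
\[
 \norm{\bar x_h - \bar x}{X} \leq \norm{T_h(\bar x_h) - T_h(\bar x)}{X} + \norm{T_h(\bar x) - \bar x}{X} \leq \tfrac12 \norm{\bar x_h - \bar x}{X} + M \norm{F_h(\bar x)}{Y},
\]
and rearranging gives $\norm{\bar x_h - \bar x}{X} \leq 2M \norm{F_h(\bar x)}{Y}$, as claimed. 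I expect the only delicate point to be the ordering of the quantifiers — $r$ must be fixed before $h_0$, since the contraction radius dictates how small $c(r)$ must be, and all three smallness requirements on $h$ must be compatible. I would also note that this conclusion uses only (i), the $F_h$-part of (ii), and (iii); the differentiability and remainder control of $F$ itself are not invoked in establishing existence or the error bound.
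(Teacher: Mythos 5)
Your proof is correct, and it is the canonical Banach fixed-point (Newton--Kantorovich-type) argument behind the Brezzi--Rappaz--Raviart theorem; the paper itself gives no proof, quoting the statement from \cite{BLS2025a}, whose proof proceeds in essentially the same way. The only cosmetic point is that the hypothesis gives the remainder bound on open balls $B_X(\bar x,r)$ while you contract on a closed ball, which is handled by choosing $r$ with $Mc(2r)\leq 1/4$ (or by continuity of $F_h$); your observation that only (i), the $F_h$-part of (ii), and (iii) are actually used is accurate.
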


We are now able to state and prove our main result.
\begin{thm}\label{thm:error_estimate_parabolic}
   Assume \ref{h_parabolic_mfg}, \ref{h_parabolic_regularity_q} and \ref{h_parabolic_mfg_diff}. Let $(u,m) \in X$ be a stable solution to \eqref{eq_mfg_parabolic_smooth}. Then there exists $h_0 > 0$ and a neighborhood $\mathcal{O}$ of $(u,m)$ in $X$, such that, for every $0 < h \leq h_0$, there exists $(u_h,m_h) \in \mathcal{O}$ such that $\Upsilon_h(u_h,m_h) = 0$. Moreover, $(u_h,m_h)$ is the unique zero of $\Upsilon_h$ in $\mathcal{O}$ and we have the error estimate
   \begin{equation}\label{eq:error_estimate_parabolic_simple}
    \norm{(u-u_h, m-m_h)}{X} \leq Ch^{2/q} \left( 1 + \module{\ln(h)}^{1-2/q} \right).
   \end{equation}
   Furthermore, if $m \in W^{0,1}_q(Q_T)$, then we have
   \begin{equation}\label{eq:error_estimate_parabolic_quasi_optimal}
    \norm{u - u_h}{W^{0,1}_q} + \norm{m - m_h}{L^q} \leq C \left( \inf_{(v_h,\rho_h) \in L^q(0,T;V_h)^2} \norm{u - v_h}{W^{0,1}_q} + h \norm{m - \rho_h}{W^{0,1}_q} \right).
   \end{equation}
   In particular, if we also have $u \in W^{1,2}_{q}(Q_T)$, then
   \[
    \norm{u - u_h}{W^{0,1}_q} + \norm{m - m_h}{L^q} = O(h).
   \]
\end{thm}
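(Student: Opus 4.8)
The plan is to apply the Brezzi--Rappaz--Raviart theorem (\cref{thm:reworked_BRR}) with $X = Y = W^{0,1}_q(Q_T) \times L^q(Q_T)$, $F = \Upsilon$, $F_h = \Upsilon_h$ and $\bar x = (u,m)$. Observe first that $\cR_1$ is constant, so $\cS_1 \cR_1$ and $\cS_1^h \cR_1$ have vanishing differentials and the only nonlinear contributions come from $\cS_2 \cR_2$ and $\cS_2^h \cR_2$. Hypothesis (iii), that $d\Upsilon[u,m] = I - \cS_2 \circ d\cR_2[u,m]$ is an isomorphism, is exactly the content of \cref{thm_mfg_parabolic_isom}, which applies since $(u,m)$ is a stable solution.

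For hypothesis (i), since $\Upsilon(u,m) = 0$ I would write the consistency error as
\[
 \Upsilon_h(u,m) = \left(\cS_1 - \cS_1^h\right)\cR_1(u,m) + \left(\cS_2 - \cS_2^h\right)\cR_2(u,m).
\]
Here $\cR_1(u,m) = (u_T,m_0) \in Y_1$ and, since $m \in L^q$ and $H_p(\cdot,Du) \in L^q$ give $m\,H_p(\cdot,Du) \in L^{q/2}$, the second component of $\cR_2(u,m)$ actually lies in the better space $L^{q/2}(0,T;W^{-1,q/2}(\Omega))$. Thus each term is controlled by the operator-norm bounds of \cref{lem_mfg_parabolic_errors}. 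Comparing the two rates there, for $q > 2(d+2)$ the forward operators $S_I$ and $S_{IS}$ produce the dominant factor $h^{2/q}\bigl(1 + \module{\ln(h)}^{1-2/q}\bigr)$, while the backward operators contribute the strictly smaller $h + h^{1-d/q} + h^{1/2}\module{\ln(h)}^{1/2}$. This yields $\norm{\Upsilon_h(u,m)}{X} \leq C h^{2/q}\bigl(1 + \module{\ln(h)}^{1-2/q}\bigr) \to 0$.

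Hypothesis (ii) is where the work lies. Strict differentiability of $\Upsilon$ is established in \cref{prop_parabolic_mfg_diff,thm_mfg_parabolic_isom}, and likewise $d\Upsilon_h[u,m] = I - \cS_2^h \circ d\cR_2[u,m]$. Because $d\cR_2[u,m]$ in fact maps $X$ into $L^{q/2}(0,T;W^{-1,q/2}(\Omega)) \subset Y_2$, the convergence $\norm{d\Upsilon[u,m] - d\Upsilon_h[u,m]}{\cL(X)} \leq \norm{\cS_2 - \cS_2^h}{\cL(L^{q/2}(W^{-1,q/2}),X)}\,\norm{d\cR_2[u,m]}{\cL(X,L^{q/2}(W^{-1,q/2}))} \to 0$ follows directly from \cref{lem_mfg_parabolic_errors}. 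The common modulus $c(r)$ is obtained from
\[
 \Upsilon_h(x) - \Upsilon_h(x') - d\Upsilon_h[u,m](x - x') = -\cS_2^h\bigl(\cR_2(x) - \cR_2(x') - d\cR_2[u,m](x - x')\bigr),
\]
by taking $c(r) = \bigl(\sup_h \norm{\cS_2^h}{\cL(Y_2,X)}\bigr)\, c_{\cR_2}(r)$, where $c_{\cR_2}$ is the strict-differentiability modulus of $\cR_2$ from \cref{prop_parabolic_mfg_diff}. \textbf{The main obstacle} is the uniform bound $\sup_h \norm{\cS_2^h}{\cL(Y_2,X)} < \infty$: the remainder of $\cR_2$ is only controlled in the weaker factor $L^{(q-\eta)/2}(0,T;W^{-1,(q-\eta)/2}(\Omega))$ of $Y_2$, whereas \cref{lem_mfg_parabolic_errors} bounds $S_{IS}^h$ on $L^{q/2}(W^{-1,q/2})$ only. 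Closing this gap requires extending the discrete maximal-regularity estimate to the exponent $(q-\eta)/2$, which is precisely what the defining condition $\tfrac{(q-\eta)(d+(q-\eta))}{d} > q$ of $Y_2$ guarantees, by ensuring that $S_{IS}^h$ still maps into $L^q$ with $h$-uniform norm.

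With (i)--(iii) verified, \cref{thm:reworked_BRR} produces the neighborhood $\mathcal{O}$, the threshold $h_0$, the unique zero $(u_h,m_h)$ of $\Upsilon_h$ in $\mathcal{O}$, and the bound $\norm{(u,m)-(u_h,m_h)}{X} \leq 2\norm{d\Upsilon[u,m]^{-1}}{\cL(X)}\norm{\Upsilon_h(u,m)}{X}$; combined with the consistency rate this gives \eqref{eq:error_estimate_parabolic_simple}. For the quasi-optimal estimate I would re-estimate the same quantity $\norm{\Upsilon_h(u,m)}{X}$ componentwise via \cref{cor:quasi_optimal_q} rather than \cref{lem_mfg_parabolic_errors}: the $u$-component of $\Upsilon_h(u,m)$ equals the error between $u = S_T u_T + S_{TS}(F[m]-H(\cdot,Du))$ and its one-step semidiscrete analogue, bounded in $W^{0,1}_q$ by $C\inf_{v_h}\norm{u - v_h}{W^{0,1}_q}$ (the backward version of \cref{cor:quasi_optimal_q}), while the $m$-component, using $m \in W^{0,1}_q(Q_T)$, is bounded in $L^q$ by $Ch\inf_{\rho_h}\norm{m - \rho_h}{W^{0,1}_q}$ via the sharper estimate \eqref{eq:quasi_optimal_Lq}. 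Summing and splitting the infimum over the independent $v_h,\rho_h$ gives \eqref{eq:error_estimate_parabolic_quasi_optimal}. The final $O(h)$ conclusion follows since $u \in W^{1,2}_q(Q_T)$ makes $\inf_{v_h}\norm{u - v_h}{W^{0,1}_q} = O(h)$ by the standard $W^{1,q}$ interpolation estimate, while $h\inf_{\rho_h}\norm{m - \rho_h}{W^{0,1}_q} = O(h)$ because $m \in W^{0,1}_q(Q_T)$ already forces the infimum to be $o(1)$.
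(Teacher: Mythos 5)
Your proposal is correct and follows essentially the same route as the paper: verification of the three Brezzi--Rappaz--Raviart hypotheses via \cref{thm_mfg_parabolic_isom}, \cref{prop_parabolic_mfg_diff} and \cref{lem_mfg_parabolic_errors}, the same decomposition $\Upsilon_h(u,m) = (\cS_1-\cS_1^h)\cR_1(u,m) + (\cS_2-\cS_2^h)\cR_2(u,m)$ for the consistency error, and \cref{cor:quasi_optimal_q} for the quasi-optimal bound. The exponent mismatch you flag for $\sup_h \norm{\cS_2^h}{\cL(Y_2,X)}$ (the $(q-\eta)/2$ versus $q/2$ scale on the second factor of $Y_2$) is a legitimate fine point, but the paper glosses over it in exactly the same way, asserting only that $(\cS_2^h)_{h>0}$ is bounded in operator norm.
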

\begin{proof}
 First, notice that \cref{lem_mfg_parabolic_errors} implies that
 \[
  \lim_{h \to 0} \Upsilon_h(u,m) = \Upsilon(u,m) = 0.
 \]
 From \cref{thm_mfg_parabolic_isom}, we know that $d\Upsilon[u,m]$ is an isomorphism on $X$. Moreover, using \cref{prop_parabolic_mfg_diff}, we know that $\Upsilon_h$ is strictly differentiable at $(u,m)$ with
\[
 d\Upsilon_h[u,m] = \left ( I - \cS_2^h \circ d\cR_2[u,m] \right)(v,\rho),
\]
and since $\left(\cS_2^h\right)_{h > 0}$ bounded in operator norm, there exists a nondecreasing function $c \colon \RR_+^* \to \RR_+$, satisfying $c(0^+) = 0$, such that
\[
 \norm{\Upsilon_h(v_1,\rho_1) - \Upsilon_h(v_2,\rho2) - d\Upsilon_h[u,m](v_1 - v_2, \rho_1 - \rho_2)}{X} \leq c(r) \norm{(v_1 - v_2, \rho_1 - \rho_2)}{X}
\]
for all $(v_1,\rho_1) \, (v_2,\rho_2) \in B_X((u,m),r)$ and $h > 0$.
In addition, it follows from \cref{lem_mfg_parabolic_errors} we have
 \begin{align*}
  & \norm{d\Upsilon[u,m] - d \Upsilon_h[u,m]}{\cL(X)} \\
  & \quad \leq \norm{\cS_2 - \cS_2^h}{\cL(Y_2,X)} \norm{d\cR_2[u,m]}{\cL(X,Y_2)} \\
  & \quad \leq C h^{\kappa} \left (1 + \module{\ln(h)}^{1 - \kappa} \right) \left(\norm{d\cR_1[u,m]}{\cL(X,Y_1)} + \norm{d\cR_2[u,m]}{\cL(X,Y_2)} \right).
 \end{align*}
 for some $\kappa \in (0,1)$, so that $\lim_{h \to 0} \norm{d\Upsilon[u,m] - d \Upsilon_h[u,m]}{\cL(X)} = 0$.
 We can therefore apply the Brezzi-Rappaz-Raviart approximation theorem \ref{thm:reworked_BRR} to deduce the existence of $(u_h,m_h)$ and the error estimate
 \[
  \norm{(u-u_h,m-m_h)}{X} \leq 2 \norm{d\Upsilon[u,m]^{-1}}{\cL(X)} \norm{\Upsilon_h(u,m)}{X}.
 \]
 Rewriting
 \begin{align*}
  \norm{\Upsilon_h(u,m)}{X} & = \norm{\Upsilon_h(u,m) - \Upsilon(u,m)}{X} \\ &=  \norm{\left(\cS_1^h - \cS_1\right) \cR_1(u,m)}{X} + \norm{\left(\cS_2^h - \cS_2\right) \cR_2(u,m)}{X},
 \end{align*}
 we deduce \eqref{eq:error_estimate_parabolic_simple} from \cref{lem_mfg_parabolic_errors}. Then, estimate \eqref{eq:error_estimate_parabolic_quasi_optimal} is a consequence of \cref{cor:quasi_optimal_q}.
\end{proof}

\appendix

\section{Nemytskii operators}
\label{section:nemytskii}

Let $\Omega$ be a set, let $f \colon \Omega \times \RR^n \to \RR$. Then, we may define a mapping $\mathfrak{f}$ from $\mathscr{F}(\Omega, \RR^n)$, the vector space of function defined on $\Omega$ with valued in $\RR^d$, to $\mathscr{F}(\Omega, \RR)$ by setting
\[
 \mathfrak{f}[u] = f(\cdot,u(\cdot)) \quad \textnormal{for all } u \in \mathscr{F}(\Omega,\RR^n).
\]
This mapping is often called the \emph{Nemytskii operator}\index{Nemytskii operator}\footnote{Sometimes also called \emph{superposition operator}\index{superposition operator}.} associated to $f$. Moreover, given two function spaces $\mathscr V \subset \mathscr F(\Omega, \RR^d)$ and $\mathscr{W} \subset \mathscr{F}(\Omega,\RR)$. The main question about Nemytskii operators is to determine under which conditions on $f$ it maps $\mathscr{V}$ to $\mathscr{W}$, whether it is continuous, differentiable, etc.
Here, we focus on the case where $\mathscr{V}$ and $\mathscr{W}$ are Lebesgue spaces. We refer to the monograph of Appell and Zabrejko \cite{AZ1990} for an in depth study of this topic.

Let $(\Omega,\cF, \mu)$ be a finite complete measure space and assume that $f \colon \Omega \times \RR^n \to \RR$ is a Carathéodory function, \textit{i.e.}, measurable with respect to the first variable and continuous with respect to the second one. We recall the following fact.
\begin{prop}[{\cite[Lemma 4.50]{AB2006}}]\label{prop:caratheodory_measurable}
 Let $(\Omega, \cF)$ be a measurable space, $X$ and $Y$  metric spaces, with $X$ separable, and $f \colon \Omega \times X \to Y$ be a Carathéodory function\footnote{That is, such that $\Omega \ni \omega \mapsto f(\omega,x) \in Y$ is measurable for every $x \in X$ and $X \ni x \mapsto f(\omega,x) \in Y$ is continuous for every $\omega \in \Omega$.}. Then $f$ is $(\cF \otimes \cB(X), \cB(Y))$-measurable.
\end{prop}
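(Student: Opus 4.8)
The plan is to approximate $f$ pointwise by a sequence of jointly measurable functions obtained by discretizing the second variable, and then to invoke the stability of measurability under pointwise limits with values in a metric space. This is the classical route, and the only place where the hypotheses are used in an essential way is the separability of $X$.

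First I would exploit separability. Fix a countable dense subset $\{x_n\}_{n \in \NN} \subset X$. For each $k \geq 1$ the balls $\{B(x_n, 1/k)\}_n$ cover $X$, and I would disjointify them by setting
\[
 A_{n,k} := B(x_n, 1/k) \setminus \bigcup_{j < n} B(x_j, 1/k),
\]
so that $\{A_{n,k}\}_n$ is a countable partition of $X$ into Borel sets. Defining $\phi_k \colon X \to X$ by $\phi_k \equiv x_n$ on $A_{n,k}$ yields a Borel-measurable ``nearest dense point'' selection with $d(\phi_k(x), x) < 1/k$ for every $x \in X$, and hence $\phi_k(x) \to x$ as $k \to \infty$.

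Next I would set $f_k(\omega, x) := f(\omega, \phi_k(x))$ and verify that each $f_k$ is $(\cF \otimes \cB(X), \cB(Y))$-measurable. Indeed, on each product cell $\Omega \times A_{n,k}$ one has $f_k(\omega,x) = f(\omega, x_n)$, which depends on $(\omega,x)$ only through the slice $\omega \mapsto f(\omega, x_n)$; this slice is $\cF$-measurable by the first Carathéodory hypothesis. Since the cells $\Omega \times A_{n,k}$ form a countable $\cF \otimes \cB(X)$-measurable partition of $\Omega \times X$ on each of which $f_k$ coincides with a measurable map, $f_k$ is measurable on the whole product. This step, resting on the countable Borel partition afforded by separability, is the crux of the argument.

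Finally, continuity of $f$ in its second argument (the second Carathéodory hypothesis) gives $f_k(\omega,x) = f(\omega, \phi_k(x)) \to f(\omega,x)$ for every $(\omega,x)$, since $\phi_k(x) \to x$. A pointwise limit of measurable maps into a metric space $Y$ is again measurable: for open $U \subseteq Y$, writing $U_m := \{y \in Y : d(y, Y \setminus U) > 1/m\}$, one checks
\[
 f^{-1}(U) = \bigcup_{m} \bigcup_{N} \bigcap_{k \geq N} f_k^{-1}(U_m),
\]
which is a countable combination of $\cF \otimes \cB(X)$-measurable sets. Hence $f$ is $(\cF \otimes \cB(X), \cB(Y))$-measurable, as claimed. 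I expect no genuine obstacle beyond this last routine verification of the limit formula; everything delicate has been isolated into the disjointification step.
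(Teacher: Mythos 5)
Your argument is correct: the disjointified-ball discretization $\phi_k$, the cell-by-cell measurability of $f_k(\omega,x)=f(\omega,\phi_k(x))$, and the closure of measurability under pointwise limits via the sets $U_m$ all check out. The paper gives no proof of its own but simply cites \cite[Lemma 4.50]{AB2006}, and the proof there is essentially this same approximation argument, so there is nothing to reconcile.
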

It follows from \cref{prop:caratheodory_measurable}that
\[
  \Omega \ni x \mapsto f(x,u(x)) \in \RR
\]
is measurable for every measurable function $u \colon \Omega \to \RR^n$. Moreover, for $p,\, q \in [1, +\infty)$, if we also assume that
\begin{equation}\label{eq:Nemytskii_growth}
 \module{f(x,z)} \leq C \left (1 + \module{z}^{\frac{p}{q}} \right) \quad \textnormal{for all } (x,z) \in \Omega \times \RR^n,
\end{equation}
for some $C > 0$, then it is easy to see that the Nemytskii operator $\mathfrak{f}$ maps $L^p(\Omega,\cF,\mu)$ to $L^q(\Omega,\cF,\mu)$.
We have the following result.
\begin{thm}[{\cite[Theorems 3.7, 3.10 and 3.13]{AZ1990}}]\label{thm:nemytskii_lebesgue}
 Let $p,q \in [1, +\infty)$ and assume that \eqref{eq:Nemytskii_growth} holds. Then the Nemytskii operator $\mathfrak{f} \colon L^{p}(\Omega,\cF,\mu) \to L^q(\Omega, \cF, \mu)$ is continuous. Moreover,
 \begin{enumerate}[label={\rm(\roman*)}]
  \item $\mathfrak{f}$ is Lipschitz continuous if $q \leq p$ and
  \[
   \module{f(x,z_1) - f(x,z_2)} \leq L \module{z_1 - z_2} \quad \textnormal{for all } z_1, \, z_2 \in \RR^n,
  \]
  with $\Lip(\mathfrak{f}) \leq L \mu(\Omega)^{\frac{p-q}{pq}}$;
  \item $\mathfrak{f}$ is continuously differentiable if $q < p$ and $f$ is $C^1$ with respect to its second variable with
  \[
   \module{D_z f(x,z)} \leq C \left(1 + \module{z}^{\frac{p}{q} -1} \right) \quad \textnormal{for all } (x,z) \in \Omega \times \RR^n,
  \]
  and we have
  \[
   d \mathfrak{f}[u](v) = \mathfrak{f_z}[u] \cdot v,
  \]
  where $\mathfrak{f_z} \colon L^p(\Omega,\cF,\mu) \to L^{\frac{pq}{p-q}}(\Omega, \cF, \mu; \RR^n)$ is the Nemytskii operator associated to $D_z f$.
 \end{enumerate}
\end{thm}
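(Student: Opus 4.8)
The plan is to prove the three assertions in order, reducing the differentiability statement to the continuity statement applied to the Nemytskii operator associated with $D_z f$. Throughout, the finiteness of $\mu(\Omega)$ and the growth bound \eqref{eq:Nemytskii_growth} are the only structural inputs.

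I would first establish continuity of $\mathfrak{f} \colon L^p(\Omega,\cF,\mu) \to L^q(\Omega,\cF,\mu)$ via the subsequence principle: a sequence converges in $L^q$ if and only if every subsequence admits a further subsequence converging to the same limit. Let $u_k \to u$ in $L^p$ and fix an arbitrary subsequence. By the standard argument behind the Riesz--Fischer theorem, it has a further subsequence $u_{k_j} \to u$ $\mu$-a.e. together with a dominating function $g \in L^p$ satisfying $\module{u_{k_j}} \leq g$ a.e. for all $j$. Since $f$ is Carathéodory, $f(x,u_{k_j}(x)) \to f(x,u(x))$ a.e., and \eqref{eq:Nemytskii_growth} gives $\module{f(\cdot,u_{k_j})}^q \leq C'\left(1 + g^p\right) \in L^1(\Omega,\cF,\mu)$. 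The dominated convergence theorem then yields $\mathfrak{f}[u_{k_j}] \to \mathfrak{f}[u]$ in $L^q$, and since the subsequence was arbitrary, $\mathfrak{f}[u_k] \to \mathfrak{f}[u]$ in $L^q$. For the Lipschitz estimate (i), when $q \leq p$ and $f$ is globally $L$-Lipschitz in its second variable, the pointwise bound $\module{\mathfrak{f}[u] - \mathfrak{f}[v]} \leq L\module{u - v}$ gives $\norm{\mathfrak{f}[u] - \mathfrak{f}[v]}{L^q} \leq L\norm{u-v}{L^q}$; Hölder's inequality on the finite measure space furnishes the embedding bound $\norm{w}{L^q} \leq \mu(\Omega)^{(p-q)/(pq)}\norm{w}{L^p}$, from which the asserted Lipschitz constant follows.

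For the differentiability statement (ii), I would first observe that the growth exponent $p/q - 1 = (p-q)/q$ equals $p/s$ for $s := pq/(p-q)$, so the hypothesis on $D_z f$ is exactly \eqref{eq:Nemytskii_growth} for the pair $(p,s)$; by the continuity part already proven, $\mathfrak{f_z} \colon L^p \to L^{pq/(p-q)}$ is well-defined and continuous. Since $\tfrac{1}{q} = \tfrac{1}{p} + \tfrac{p-q}{pq}$, Hölder's inequality makes $(w,v) \mapsto w \cdot v$ bounded from $L^{pq/(p-q)} \times L^p$ into $L^q$, so $v \mapsto \mathfrak{f_z}[u]\cdot v$ is a bounded linear operator $L^p \to L^q$, the candidate differential. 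Using the fundamental theorem of calculus in the second variable of $f$, I would write
\[
 \mathfrak{f}[u+v] - \mathfrak{f}[u] - \mathfrak{f_z}[u] \cdot v = \left( \int_0^1 \left( \mathfrak{f_z}[u + tv] - \mathfrak{f_z}[u] \right) dt \right) \cdot v.
\]
Hölder's inequality together with Minkowski's integral inequality then bounds the $L^q$ norm of the left-hand side by $\bigl(\int_0^1 \norm{\mathfrak{f_z}[u+tv] - \mathfrak{f_z}[u]}{L^{pq/(p-q)}}\,dt\bigr)\norm{v}{L^p}$. Introducing the modulus of continuity $\omega(\delta) := \sup_{\norm{w}{L^p}\leq \delta}\norm{\mathfrak{f_z}[u+w] - \mathfrak{f_z}[u]}{L^{pq/(p-q)}}$, which tends to $0$ as $\delta \to 0^+$ by continuity of $\mathfrak{f_z}$ at $u$, the integrand is dominated by $\omega(\norm{v}{L^p})$ uniformly in $t \in [0,1]$. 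Hence the remainder is $o(\norm{v}{L^p})$, establishing Fréchet differentiability with $d\mathfrak{f}[u] = \mathfrak{f_z}[u]\cdot(\cdot)$. Finally, continuity of $u \mapsto d\mathfrak{f}[u]$ follows from $\norm{d\mathfrak{f}[u_1] - d\mathfrak{f}[u_2]}{\cL(L^p,L^q)} \leq \norm{\mathfrak{f_z}[u_1] - \mathfrak{f_z}[u_2]}{L^{pq/(p-q)}}$, again by Hölder, which vanishes as $u_1 \to u_2$ by continuity of $\mathfrak{f_z}$; thus $\mathfrak{f}$ is $C^1$.

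The main obstacle is the continuity argument, where one cannot in general produce a single dominating function for the whole sequence; the resolution is precisely the subsequence-extraction principle combined with a.e. domination and the dominated convergence theorem. Once continuity is in hand, the differentiability part becomes largely mechanical: its only genuine content is the bookkeeping that identifies $pq/(p-q)$ as the correct target exponent for $\mathfrak{f_z}$ and the use of the modulus of continuity to handle the parameter integral over $t$.
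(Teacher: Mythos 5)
Your proof is correct. Note that the paper does not actually prove this statement: it is quoted verbatim from the monograph of Appell and Zabrejko \cite{AZ1990} (their Theorems 3.7, 3.10 and 3.13), so there is no internal proof to compare against; what you have produced is a self-contained proof of the cited result. Your route is the standard elementary one and it holds up: the subsequence-extraction principle with a.e.\ domination and dominated convergence for continuity (which correctly resolves the absence of a single dominating function for the whole sequence); the pointwise Lipschitz bound followed by the H\"older embedding $\norm{w}{L^q} \leq \mu(\Omega)^{\frac{p-q}{pq}} \norm{w}{L^p}$ for (i); and, for (ii), the key bookkeeping identity $p/q - 1 = p/s$ with $s = pq/(p-q)$, which lets you invoke the continuity part for $\mathfrak{f_z} \colon L^p \to L^s$, combined with the fundamental theorem of calculus, H\"older, Minkowski's integral inequality and the modulus of continuity of $\mathfrak{f_z}$ at $u$ to make the remainder $o(\norm{v}{L^p})$. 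By contrast, the cited source works in the much more general framework of ideal spaces and also proves necessity statements (boundedness and continuity are essentially automatic for superposition operators between Lebesgue spaces, and differentiability forces the derivative to be the superposition operator generated by $D_z f$); your argument establishes only the sufficiency directions, but these are exactly what the statement asserts, so nothing is missing. Two routine points you could have made explicit: the joint measurability of $(t,x) \mapsto D_z f(x, u(x) + t v(x))$, which Minkowski's integral inequality requires and which follows from the Carath\'eodory property via \cref{prop:caratheodory_measurable}, and the componentwise application of the scalar continuity statement to the $\RR^n$-valued function $D_z f$.
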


For a locally Lipschitzian function $\phi \colon \RR^n \to \RR$, the \emph{Clarke generalized directional derivative} $\phi^\circ \colon \RR^n \times \RR^d \to \RR$ is defined by
  \begin{equation}
   \phi^\circ(z,\upsilon) = \limsup_{\substack{y \to z \\ \tau \searrow 0}} \frac{\phi(y + \tau \upsilon) - \phi(y)}{\tau}.
  \end{equation}
The \emph{Clarke's subdifferential} of $\phi$ at $z \in \RR^n$ is then defined as the set
\begin{equation}\label{eq:subdiff_def}
 \partial^C \phi(z) = \left \{ \xi \in \RR^n : \phi^\circ(z,\upsilon) \geq \xi \cdot \nu \, \textnormal{ for all } \nu \in \RR^d \right \},
\end{equation}
but is also characterized by
\begin{equation}\label{eq:clarke_characterization}
 \partial^C \phi(z) = \co \left \{ \xi \in \RR^{n} : \exists (z_k)_{k \geq 0} \subset \RR^n \setminus \mathcal{N}, \, \xi = \lim_{k \to \infty} D \phi(z_k) \right \},
\end{equation}
where $\cN \subset \RR^n$ is a negligible set\footnote{The set $\mathcal{N}$ is known to exist because of Rademarcher's theorem \cite[Theorem 3.2]{EG2015}. Although it may seem like this definition depends on the choice of the set $\mathcal{N}$, it was proved in \cite[Theorem 4]{W1981} that this is not the case.} It is know that Clarke's subdifferential is a nonempty, compact and convex set for all $z \in \RR^n$. We refer to \cite{C1990} for further information on this topic.
By extension, if the function $f \colon \Omega \times \RR^n \to \RR$ is locally Lipschitz continuous with respect to its second variable, we denote by $\partial^C f(x,z)$ the Clarke subdifferential of $f(x,\cdot)$ at $z \in \RR^n$.

\begin{prop}[{\cite[Theorem 4.11]{BLS2025a}}]\label{thm:nemytskii_mean_value}
Let $1 \leq q < p < \infty$. Assume that $f$ is locally Lipschitzian with respect to its second variable and satisfies \eqref{eq:Nemytskii_growth} and
\[
 \xi \in \partial^C f(x,z) \Longrightarrow \module{\xi} \leq C \left(1 + \module{z}^{\frac{p}{q} - 1} \right).
\]
Then, for every $u, \, v \in L^p(\Omega,\cF, \mu)$, there exists $\xi \in L^r(\Omega, \cF, \mu)$ such that
\[
 \mathfrak{f}[u] - \mathfrak{f}[v] = \xi \cdot (u - v) \quad \textnormal{$\mu$-a.e.,}
\]
were $r = \frac{pq}{p-q}$.
\end{prop}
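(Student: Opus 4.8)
The plan is to construct $\xi$ by an explicit algebraic formula rather than through an abstract measurable selection, which is what makes the argument clean: the Clarke subdifferential growth bound will be needed only to control the size of $\xi$, never to define it. Since $f$ is Carathéodory, \cref{prop:caratheodory_measurable} ensures that $\mathfrak{f}[u] = f(\cdot,u(\cdot))$ and $\mathfrak{f}[v]$ are measurable. On the measurable set $A := \left\{ x \in \Omega :\, u(x) \neq v(x) \right\}$ I would set
\[
 \xi(x) := \frac{f(x,u(x)) - f(x,v(x))}{\module{u(x) - v(x)}^2} \left( u(x) - v(x) \right),
\]
and $\xi(x) := 0$ on $\Omega \setminus A$. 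With this definition the identity $\mathfrak{f}[u] - \mathfrak{f}[v] = \xi \cdot (u-v)$ holds $\mu$-a.e.: on $A$ it is immediate from the form of $\xi$, since $(u-v)\cdot(u-v) = \module{u-v}^2$, while on $\Omega \setminus A$ both sides vanish. Measurability of the $\RR^n$-valued function $\xi$ is clear, as it is assembled from measurable functions with a strictly positive denominator on $A$.

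The next step is to bound $\module{\xi}$ pointwise. Here I would invoke Lebourg's mean value theorem applied to the locally Lipschitz function $f(x,\cdot)$ at the endpoints $v(x)$ and $u(x)$: for each $x \in A$ there exist a point $c(x)$ on the open segment between $v(x)$ and $u(x)$ and a subgradient $\zeta(x) \in \partial^C f(x,c(x))$ such that $f(x,u(x)) - f(x,v(x)) = \zeta(x) \cdot (u(x) - v(x))$. The Cauchy–Schwarz inequality then gives $\module{f(x,u(x)) - f(x,v(x))} \leq \module{\zeta(x)} \, \module{u(x) - v(x)}$, so that on $A$
\[
 \module{\xi(x)} \leq \module{\zeta(x)} \leq C \left( 1 + \module{c(x)}^{\frac{p}{q} - 1} \right) \leq C \left( 1 + \left( \module{u(x)} + \module{v(x)} \right)^{\frac{p}{q} - 1} \right),
\]
where I used the subdifferential growth hypothesis together with $\module{c(x)} \leq \module{u(x)} + \module{v(x)}$, valid since $c(x)$ lies on the segment. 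The same bound holds trivially on $\Omega \setminus A$.

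It remains to verify integrability. Since $\frac{p}{q} - 1 = \frac{p-q}{q}$ and $r = \frac{pq}{p-q}$, one computes $r \left( \frac{p}{q} - 1 \right) = p$, whence $\left( \module{u} + \module{v} \right)^{\left( \frac{p}{q} - 1 \right) r} = \left( \module{u} + \module{v} \right)^p$, which is integrable because $u,\, v \in L^p(\Omega, \cF, \mu)$. As $\mu$ is finite, the constant term also lies in $L^r$, so the pointwise estimate yields $\xi \in L^r(\Omega, \cF, \mu)$ with $r = \frac{pq}{p-q}$, which concludes the argument.

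I do not anticipate a serious obstacle. The one point that usually causes trouble, namely producing a \emph{measurable} selection $\xi(x) \in \partial^C f(x,c(x))$, is entirely avoided by the explicit formula: Lebourg's theorem enters only through the magnitude bound. I would also note that one may replace the appeal to Lebourg's theorem by the elementary fact that the Lipschitz rank of $f(x,\cdot)$ on the segment equals the supremum of the subgradient norms there, which gives the same estimate on $\module{\xi}$.
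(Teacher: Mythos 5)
Your proof is correct. A remark on the comparison itself: this paper does not contain its own proof of the proposition --- it is imported verbatim with a citation to \cite{BLS2025a}, so there is no internal argument to measure yours against, and your proof must be judged as a self-contained one. On those terms it is complete and sound: the explicit difference-quotient formula for $\xi$ on $A=\{u\neq v\}$ makes both the identity $\mathfrak{f}[u]-\mathfrak{f}[v]=\xi\cdot(u-v)$ and the measurability of $\xi$ immediate; Lebourg's mean value theorem enters only through the scalar estimate $\module{\xi(x)}\leq \module{\zeta(x)}\leq C\bigl(1+(\module{u(x)}+\module{v(x)})^{\frac{p}{q}-1}\bigr)$, so no measurable selection of the intermediate point $c(x)$ or of the subgradient $\zeta(x)$ is ever needed --- this is precisely the right device, since measurability of such selections is the only delicate point in statements of this type; and the exponent arithmetic $r\left(\tfrac{p}{q}-1\right)=p$, together with finiteness of $\mu$, yields $\xi\in L^r(\Omega,\cF,\mu;\RR^n)$. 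Two accurate side observations in your write-up are worth keeping: the growth bound \eqref{eq:Nemytskii_growth} on $f$ itself is never used for the representation (only the subdifferential bound matters), and the appeal to Lebourg could be replaced by the fact that the Lipschitz rank of $f(x,\cdot)$ on the segment is controlled by the supremum of $\module{\partial^C f(x,\cdot)}$ there.
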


\section{Proofs}
\label{section:proofs}

\subsection{Proof of \cref{prop_comparison_principle}}
\label{section:proof_comparison}
Let $w = u-v$. We have to prove that $w \leq 0$ almost everywhere. Using the fact that $u$ and $v$ are sub- and supersolutions to \eqref{eq_mfg_parabolic_smooth_HJ}, respectively, we have
  \begin{multline}
   \int_0^t \langle \partial_t w(s), \phi(s) \rangle_{H^{-1},H^1} + \int_\Omega D w(s,x) \cdot D \phi(s,x) \, dx ds \\ \leq \int_0^t \int_\Omega \left( H(x,Dv(s,x)) - H(x,Du(s,x)) \right) \phi(s,x) \, dx ds
  \end{multline}
  for all $\phi \in C^\infty_c([0,T \times \Omega])$ and $t \in (0,T)$. From \cref{thm:nemytskii_mean_value}, there exists $\xi \in L^{d+2}(Q_T;\RR^d)$ such that
   \[
    H(x,Du(t,x)) - H(x,Dv(t,x)) = - \xi(t,x) \cdot Dw(t,x) \quad \textnormal{for a.e. $(t,x) \in Q_T$},
   \]
  so that
  \begin{multline}\label{eq_mfg_parabolic_smooth_HJ_comparison_1}
   \int_0^t \langle \partial_t w(s), \phi(s) \rangle_{H^{-1},H^1} + \int_\Omega D w(s,x) \cdot D \phi(s,x) \, dx ds \\ \leq \int_0^t \int_\Omega \xi(s,x) \cdot Dw(s,x) \phi(s,x) \, dx ds
  \end{multline}
  for all $\phi \in C^\infty_c([0,T \times \Omega])$ and $t \in (0,T)$. Observe that, by density, \eqref{eq_mfg_parabolic_smooth_HJ_comparison_1} also holds for any $\phi \in W^{0,1}_{2}(Q_T) \cap L^{2(d+2)/d}(Q_T)$. Using \cite[Theorem 6.9]{L1996}, we know that $w \in L^{2(d+2)/d}(Q_T)$ and hence $(w)_+ \in  W^{0,1}_{2}(Q_T) \cap L^{2(d+2)/d}(Q_T)$ with $D(w(t,x))_+ = Dw(t,x) \mathbbm{1}_{\{w(t,x) > 0 \}}$. It follows that
  \begin{multline}\label{eq_mfg_parabolic_smooth_HJ_comparison_2}
   \int_0^t \langle \partial_t w(s), (w(s))_+ \rangle_{H^{-1},H^1} + \int_\Omega \module{D w(s,x)}^2 \mathbbm{1}_{\{w > 0 \}}(s,x) \, dx ds \\ \leq \int_0^t \int_\Omega \xi(s,x) \cdot Dw(s,x) (w(s,x))_+ \, dx ds
  \end{multline}
  for all $t \in (0,T)$. Using Hölder's and Young's inequalities, we then deduce that
  \begin{multline}\label{eq_mfg_parabolic_smooth_HJ_comparison_3}
   \int_0^t \langle \partial_t w(s), (w(s))_+ \rangle_{H^{-1},H^1} + \frac{1}{2} \int_\Omega \module{D w(s,x)}^2 \mathbbm{1}_{\{w > 0 \}}(s,x) \, dx ds \\ \leq C \left(\int_0^t \int_\Omega \module{\xi(s,x)}^{d+2}\, dx ds \right)^{\frac{1}{d+2}} \left(\int_0^t \int_\Omega \module{(w(s,x))_+}^{\frac{2(d+2)}{d}} \right)^{\frac{d}{2(d+2)}}
  \end{multline}
  for all $t \in (0,T)$. From the parabolic Sobolev inequality \cite[Theorem 6.9]{L1996}, we have
  \begin{multline}
   \left(\int_0^t \int_\Omega \module{(w(s,x))_+}^{\frac{2(d+2)}{d}} \, dx ds \right)^{\frac{d}{2(d+2)}} \\ \leq C \left(\sup_{s \in [0,t]} \norm{(w(s,\cdot))_+}{L^2} \right)^{\frac{2}{d+2}} \left( \int_0^t \int_\Omega \module{Dw(s,x)}^2 \mathbbm{1}_{\{w > 0\}}(s,x)\, dx ds \right)^{\frac{d}{2(d+2)}}.
  \end{multline}
  Another application of Young's inequality then yields
  \begin{multline}
   \int_0^t \langle \partial_t w(s), (w(s))_+ \rangle_{H^{-1},H^1} + \frac{1}{4} \int_\Omega \module{D w(s,x)}^2 \mathbbm{1}_{\{w > 0 \}}(s,x) \, dx ds \\ \leq C \left(\int_0^t \int_\Omega \module{\xi(s,x)}^{d+2}\, dx ds \right)^{\frac{1}{2}} \left(\sup_{s \in [0,t]} \norm{(w(s,\cdot))_+}{L^2} \right).
  \end{multline}
  In particular, since $(w(0,\cdot))_+ = 0$, we have
  \[
   \sup_{s \in[0,t]} \norm{(w(s,\cdot))_+}{L^2} \leq C \left(\int_0^t \int_\Omega \module{\xi(s,x)}^{d+2}\, dx ds \right)^{\frac{1}{2}} \left(\sup_{s \in [0,t]} \norm{(w(s,\cdot))_+}{L^2} \right)
  \]
  for all $t \in (0,T)$. Let $\tau > 0$, depending only on $\xi$ and $C$, be such that
  \[
    \left(\int_t^{t + \tau} \int_\Omega \module{\xi(s,x)}^{d+2}\, dx ds \right)^{\frac{1}{2}} \leq \frac{1}{2C}
  \]
  for all $t \in (0 ,T-\tau)$. We then have
  \[
   \sup_{s \in[0,\tau]} \norm{(w(s,\cdot))_+}{L^2} = 0.
  \]
  We may then proceed by induction to obtain that
  \[
   \sup_{s \in[0,T]} \norm{(w(s,\cdot))_+}{L^2} = 0,
  \]
  which concludes the proof.

\subsection{Proof of \cref{thm_mfg_parabolic_HJ_wp}}
\label{section:proof_HJ_wp}

We start with the following estimate, which is a consequence of the comparison principle for \eqref{eq_mfg_parabolic_smooth_HJ}.
\begin{lem}\label{cor_comparison_principle}
  Let $u$ be a weak solution to \eqref{eq_mfg_parabolic_smooth_HJ} with $Du \in L^{d+2}(Q_T)$, then
  \[
   \norm{u}{L^\infty} \leq \norm{u_0}{L^\infty} + \left( C_H + \norm{f}{L^\infty} \right)T.
  \]
  Moreover, there exists $\alpha \in (0,1)$ and $K > 0$ such that $u \in C^{\alpha/2, \alpha}(Q_T)$ with
  \[
   \norm{u}{\alpha/2, \alpha} \leq K.
  \]
\end{lem}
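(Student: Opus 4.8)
The plan is to establish the two bounds separately: the $L^\infty$ estimate is a direct application of the comparison principle \cref{prop_comparison_principle} with explicit barriers, whereas the Hölder estimate requires an exponential change of unknown to absorb the quadratic gradient term before any De Giorgi--Nash--Moser theory can be invoked.

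For the $L^\infty$ bound I would introduce the spatially constant, affine-in-time functions
\[
 \bar u(t,x) := \norm{u_0}{L^\infty} + \left( C_H + \norm{f}{L^\infty}\right) t, \qquad \underline u(t,x) := - \bar u(t,x).
\]
Since $D\bar u \equiv 0$, evaluating the growth bound \eqref{eq_mfg_parabolic_smooth_HJ_growth} at $p=0$ gives $\module{H(x,0)} \leq C_H$, so that $\partial_t \bar u - \Delta \bar u + H(x,D\bar u) = C_H + \norm{f}{L^\infty} + H(x,0) \geq \norm{f}{L^\infty} \geq f$; as $\bar u$ is smooth with $\bar u(0,\cdot) \geq u_0$ and $\bar u \geq 0$ on $\partial\Omega$, it is a weak supersolution. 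Symmetrically, $\underline u$ is a weak subsolution lying below the data. Because $D\bar u = D\underline u = 0 \in L^{d+2}(Q_T)$ and $Du \in L^{d+2}(Q_T)$ by hypothesis, \cref{prop_comparison_principle} applies to the pairs $(u,\bar u)$ and $(\underline u,u)$, yielding $\underline u \leq u \leq \bar u$ almost everywhere; evaluating at times $t \leq T$ gives $\norm{u}{L^\infty} \leq \norm{u_0}{L^\infty} + (C_H + \norm{f}{L^\infty})T =: M$.

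For the Hölder estimate the obstruction is that $Du \in L^{d+2}(Q_T)$ only yields $H(\cdot,Du) \in L^{(d+2)/2}(Q_T)$, precisely the borderline exponent for which \cref{prop_parabolic_FP_DGNM} fails to deliver continuity. To bypass this I would pass to the bounded, increasing change of unknown $w := C_H^{-1}(e^{C_H u}-1)$. A direct computation using $\partial_t u - \Delta u = f - H(\cdot,Du)$ together with the lower growth bound $H(x,p) \geq -C_H(1+\module{p}^2)$ gives
\[
 \partial_t w - \Delta w = e^{C_H u}\left( f - H(\cdot,Du) - C_H\module{Du}^2\right) \leq e^{C_H u}\left( f + C_H\right) \leq e^{C_H M}\left( \norm{f}{L^\infty} + C_H\right),
\]
so $w$ is a weak subsolution of the heat equation with an $L^\infty$ right-hand side; the companion substitution $\tilde w := C_H^{-1}(1 - e^{-C_H u})$ is a supersolution with an $L^\infty$ right-hand side. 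The key point is that the exponential transforms absorb the quadratic gradient term exactly, so no borderline integrability survives and the $L^\infty$ bound $M$ controls all constants.

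It remains to upgrade these one-sided heat inequalities to global Hölder continuity. Since $w$ and $\tilde w$ are strictly increasing functions of $u$ that are bi-Lipschitz on the range $[-M,M]$, the super-level sets of $u$ coincide with those of $w$ and the sub-level sets with those of $\tilde w$; applying the parabolic De Giorgi oscillation lemma to the subsolution $w$ from above and to the supersolution $\tilde w$ from below (the quasilinear theory of \cite[Ch.~V]{LSU1968}) yields a geometric decay of the oscillation of $u$ on shrinking parabolic cylinders, hence an interior $C^{\alpha/2,\alpha}$ bound depending only on $d$, $T$, $C_H$ and $M$. Up to the lateral boundary and the initial slice I would combine this with local barriers built on the uniform exterior cone condition and the Hölder regularity of the data ($u=0$ on $\partial\Omega$ and $u_0 \in W^{2,\infty}(\Omega) \hookrightarrow C^\alpha(\Omega)$ vanishing on $\partial\Omega$) to obtain $\norm{u}{\alpha/2,\alpha} \leq K$. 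The main obstacle is this last step: the interior estimate is classical once the substitution removes the quadratic term, but propagating Hölder continuity up to the nonsmooth boundary under only the cone condition requires constructing suitable barriers and tracking the dependence of constants on $M$ and the cone parameters $r,\kappa$.
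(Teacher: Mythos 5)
Your proof is correct and follows essentially the same route as the paper: the $L^\infty$ bound is obtained there by exactly your barriers $\pm\bigl(\norm{u_0}{L^\infty} + (C_H + \norm{f}{L^\infty})t\bigr)$ fed into \cref{prop_comparison_principle}. For the Hölder bound the paper simply cites \cite[Theorem 1.1 p.419]{LSU1968}, which is precisely the result whose standard proof (exponential substitution to absorb the quadratic gradient term, De Giorgi classes, boundary barriers under the cone condition) you reconstruct.
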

\begin{proof}
 The conclusion follows from \cref{prop_comparison_principle} by noticing that
 \[
  v(t,x) = \norm{u_0}{L^\infty} + \left(C_H + \norm{f}{L^\infty} \right) t
 \]
 and $w(t,x) = -v(t,x)$ are super- and subsolutions to \eqref{eq_mfg_parabolic_smooth_HJ}, respectively. The Hölder estimate then follows from \cite[Theorem 1.1 p.419]{LSU1968}.
\end{proof}

We now turn to the proof of \cref{thm_mfg_parabolic_HJ_wp}. We define a mapping $\Phi \colon W^{0,1}_{q}(Q_T) \to W^{0,1}_{q}(Q_T)$ by setting $\Phi(w) = v$, where $v$ is the unique weak solution to
 \[
  \begin{cases}
   \partial_t v - \Delta v = f - H(x,Dw) \quad & \textnormal{in } Q_T, \\
   v(t,\cdot) = 0 \quad & \textnormal{on $\partial\Omega$ for $t \in (0,T)$,} \\
   v(0,\cdot) = u_0 \quad & \textnormal{on } \Omega.
  \end{cases}
 \]
 Let us check that $v \in W^{0,1}_q(Q_T)$. From \ref{eq_mfg_parabolic_smooth_HJ_growth}, we have that $H(\cdot, Dw) \in L^{q/2}(Q_T)$. It then follows from \cref{prop_parabolic_FP_DGNM,prop_parabolic_W2p} that $v \in W^{1,2}_{q/2}(Q_T) \cap L^\infty(Q_T)$. From the Gagliardo-Nirenberg inequality \cite{N1966} we have
 \[
  \norm{Dv(t,\cdot)}{L^q} \leq C_1\norm{D^2 v(t,\cdot)}{L^{q/2}}^{1/2} \norm{v(t,\cdot)}{L^\infty}^{1/2} + C_2 \norm{v(t,\cdot)}{L^\infty}
 \]
 and hence
 \begin{equation}\label{eq_mfg_parabolic_smooth_GN}
  \int_0^T \norm{Dv(t,\cdot)}{L^q}^q \, dt \leq C_1 \norm{v}{L^\infty}^{q/2} \int_0^T \norm{D^2 v(t,\cdot)}{L^{q/2}}^{q/2} \, dt + C_2T \norm{v}{L^\infty}^q,
 \end{equation}
 so that $Dv \in L^{q}(Q_T)$. From a version of the Aubin-Dubinskii lemma \cite[Corollary 8]{S1987}, we have that the range of $\Phi$ is compact in $C([0,T];W^{1,q/2}(\Omega)) \hookrightarrow L^\infty(Q_T)$ and it then follows from \eqref{eq_mfg_parabolic_smooth_GN} that $\Phi$ is compact. In order to apply the Leray-Schauder theorem \cite[Theorem 11.3]{GT2001}, we have to prove that the set of those $u$ such that
 $u = \sigma \Phi(u)$ for some $\sigma \in [0,1]$ is bounded in $W^{0,1}_{q}(Q_T)$. If $u = \sigma \Phi(u)$, then $u$ is a weak solution to
 \[
  \begin{cases}
   \partial_t u - \Delta u = \sigma \left(f - H(x,Du) \right) \quad & \textnormal{in } Q_T, \\
   u(t,\cdot) = 0 \quad & \textnormal{on $\partial\Omega$ for $t \in (0,T)$,} \\
   u(0,\cdot) = \sigma u_0 \quad & \textnormal{on } \Omega.
  \end{cases}
 \]
 From \cref{cor_comparison_principle}, we have that
 \[
  \norm{u}{L^\infty} \leq \norm{u_0}{L^\infty} + \left( C_H + \norm{f}{L^\infty} \right)T.
 \]
 Therefore, using \eqref{eq_mfg_parabolic_smooth_GN}, it appears that it is enough to prove that $u$ is bounded in $W^{1,2}_{q/2}(Q_T)$. We follow an argument from \cite{CG2021}. From \cref{prop_parabolic_W2p}, we have that
 \begin{equation}\label{eq_mfg_parabolic_smooth_HJ_Lp_estimate}
  \norm{u}{W^{1,2}_{q/2}} \leq C \left(1 + \norm{Du}{L^{q}}^2 + \norm{f}{L^{q/2}} + \norm{u_0}{W^{2,q/2}} \right)
 \end{equation}
 for some $C >0$ independent of $\sigma \in [0,1]$. Moreover, from \cref{cor_comparison_principle}, we also have
 \[
  \norm{u}{\alpha/2,\alpha} \leq K
 \]
for some $K > 0$ and $\alpha \in (0,1)$ independent of $\sigma \in [0,1]$. The Miranda-Nirenberg inequality \cite{N1966} then yields
 \[
  \norm{Du(t,\cdot)}{L^q} \leq C_1 \norm{D^2 u(t,\cdot)}{L^{q/2}}^{\theta} \norm{u(t,\cdot)}{\alpha/2,\alpha}^{1-\theta} + C_2 \norm{u(t,\cdot)}{\alpha/2,\alpha}
 \]
 for some $\theta \in (0,1/2)$ depending in $\alpha$. After integration we obtain
\begin{equation}\label{eq_mfg_parabolic_smooth_MN}
  \int_0^T \norm{Du(t,\cdot)}{L^q}^q \, dt \leq C_1 \norm{u}{\alpha/2,\alpha}^{q(1-\theta)} \int_0^T \norm{D^2 u(t,\cdot)}{L^{q/2}}^{q\theta} \, dt + C_2T \norm{u}{\alpha/2,\alpha}^q.
 \end{equation}
 Combining \eqref{eq_mfg_parabolic_smooth_HJ_Lp_estimate} and \eqref{eq_mfg_parabolic_smooth_MN}, we deduce that
 \[
  \norm{u}{W^{1,2}_{q/2}} \leq C \left(1 + C_1^{1/q} K^{1 - \theta} \norm{u}{W^{1,2}_{q/2}}^{2 \theta} + (C_2 T)^{1/q} K + \norm{f}{L^{q/2}} + \norm{u_0}{W^{2,q/2}} \right)
 \]
 and we obtain the required estimate by applying Young's inequality, since $2 \theta < 1$.

 \subsection{Proof of \cref{thm_mfg_wp}}
 \label{section:proof_mfg_wp}

 Define the linear operators $S_I$, $S_T$, $S_{IS}$ and $S_{TS}$ according to \eqref{eq_mfg_parabolic_smooth_SI}, \eqref{eq_mfg_parabolic_smooth_ST}, \eqref{eq_mfg_parabolic_smooth_SIS} and \eqref{eq_mfg_parabolic_smooth_STS}, respectively. From \cref{prop_parabolic_FP_wellposed,prop_parabolic_FP_DGNM,prop_parabolic_W2p}, have that
\[
 S_I \in \cL(C^{\alpha}(\Omega),\cH^1_2(Q_T) \cap L^\infty(Q_T)), \quad S_{IS} \in \cL(L^{q/2}(0,T; W^{-1,q/2}(\Omega)),\cH^1_{2}(Q_T)),
\]
where $\alpha$ is fixed in \ref{h_parabolic_mfg}, and
\[
 S_T \in \cL(W^{2,\infty}, W^{1,2}_{q/2}(Q_T) \cap L^\infty(Q_T)), \quad S_{TS} \in \cL(L^{q/2}(Q_T), W^{1,2}_{q/2}(Q_T) \cap L^\infty(Q_T)).
\]
Using \cref{prop_parabolic_FP_DGNM}, we observe that $\cS_1$ and $\cS_2$, defined in  \eqref{eq_mfg_parabolic_smooth_def_S}, have range contained in
\[
 Z = \left(W^{1,2}_{q/2}(Q_T) \cap L^\infty(Q_T) \right) \times \left (\cH^1_2(Q_T)\cap C^{\beta/2, \beta}(Q_T) \right )
\]
for some $\beta \in (0,1)$.
Setting
\[
  X =  W^{0,1}_q(Q_T) \times L^\infty(Q_T),
\]
and using \ref{h_parabolic_mfg}, we have that the mapping
\[
 \cS \cR \colon \left \{ \begin{array}{l} X \to Z \\ (v,\rho) \mapsto \cS_1 \cR_1(v,\rho) + \cS_2 \cR_2(v,\rho)\end{array} \right.,
\]
where $\cR_1$ and $\cR_2$ are defined in \eqref{eq_mfg_parabolic_smooth_def_R}, is well-defined, continuous and maps bounded subsets of $X$ to bounded subsets of $Z$. Moreover, as a consequence of \cref{lem:parabolic_compact_embedding} and the Arzela-Ascoli theorem, we know that the embedding $Z \hookrightarrow X$ is continuous and compact. In particular, it follows that $\cS \cR \colon X \to X$ is continuous and compact.
We are therefore going to apply the Leray-Schauder fixed point theorem \cite[Theorem 11.3]{GT2001} to $\cS \cR$.
We have to prove that the set
\[
 \Sigma := \left \{ (u_\sigma,m_\sigma) \in X : \, (u_\sigma, m_\sigma) = \sigma \cS \cR(u_\sigma, m_\sigma), \, \sigma \in [0,1] \right \}
\]
is bounded. Notice that if $(u_\sigma, m_\sigma) \in \Sigma$, then
\begin{equation}
 \begin{cases}
    - \partial_t u_\sigma - \Delta u_\sigma + \sigma H(x,Du_\sigma) = \sigma F[m_\sigma] \quad & \textnormal{in } (0,T) \times \Omega, \\
    \partial_t m_\sigma - \Delta m_\sigma - \diver \left ( \sigma m H_p(x,Du_\sigma) \right )= 0 \quad & \textnormal{in } (0,T) \times \Omega, \\
    u_\sigma = m_\sigma = 0 \quad & \textnormal{on } (0,T) \times \partial \Omega \\
    u_\sigma(T,\cdot) = \sigma u_T, \quad m_\sigma(0,\cdot) = \sigma m_0 \quad & \textnormal{in } \Omega.
 \end{cases}
\end{equation}
We deduce from \cref{prop_parabolic_FP_wellposed,prop_parabolic_FP_DGNM,thm_mfg_parabolic_HJ_wp} that there exists $C > 0$, independent of $\sigma \in [0,1]$, such that
\[
 \norm{(u_\sigma, m_\sigma)}{X} \leq C.
\]
We can apply the Leray-Schauder theorem, which proves existence of a weak solution to \eqref{eq_mfg_parabolic_smooth}.

The argument for uniqueness is standard and can be found for instance in \cite{LL2007} or \cite[Theorem 1.4]{CP2020}.

\printbibliography

\end{document}